\newtheorem{theorem}{Theorem}
\newtheorem{lemma}{Lemma}
\newtheorem{definition}{Definition}
\newtheorem{proposition}{Proposition}
\newtheorem{corollary}{Corollary}
\newtheorem{example}{Example}
\newtheorem{remark}{Remark} 
\newcommand{\bea}{\begin{eqnarray}}
\newcommand{\eea}{\end{eqnarray}}
\crefname{theorem}{Theorem}{Theorems}
\crefname{proposition}{Proposition}{Propositions}
\crefname{lemma}{Lemma}{Lemmas}
\crefname{corollary}{Corollary}{Corollaries}
\crefname{definition}{Definition}{Definitions}
\crefname{remark}{Remark}{Remarks}
\crefname{example}{Example}{Examples}
\crefname{table}{Table}{Tables}
\renewenvironment{proof}[1][\proofname]{%
  \par\pushQED{\qed}%
  \normalfont\topsep6pt \partopsep0pt 
  \trivlist
  \item[\hskip\labelsep
        \bfseries #1.]\ignorespaces 
}{%
  \popQED\endtrivlist\@endpefalse
}
\def\ps@pprintTitle{%
 \let\@oddhead\@empty
 \let\@evenhead\@empty
 \let\@oddfoot\@empty
 \let\@evenfoot\@empty
}
\begin{document} 
\begin{frontmatter}
\title{Weighted Tail Random Variable: A Novel Framework with Stochastic Properties and Applications}
\author{Sarikul Islam $^{*}$\quad and\quad Nitin Gupta $^{**}$ }
\address{\normalfont $^{*,\,**}$ Department of Mathematics, 
Indian Institute of Technology Kharagpur, 721302, West Bengal, India.\\
Email address: $^{*}$sarikul\_phd\_math@kgpian.iitkgp.ac.in, \: $^{**}$nitin.gupta@maths.iitkgp.ac.in.}
\begin{abstract}
This paper introduces a novel framework to construct the probability density function (PDF) of non-negative continuous random variables. The proposed framework uses two functions: one is the survival function (SF) of a non-negative continuous random variable, and the other is a weight function, which is an increasing and differentiable function satisfying some properties. The resulting random variable is referred to as the weighted tail random variable (WTRV) corresponding to the given random variable and the weight function. We investigate several reliability properties of the WTRV and establish various stochastic orderings between a random variable and its WTRV, as well as between two WTRVs. Using this framework, we construct a WTRV of the Kumaraswamy distribution. We conduct goodness-of-fit tests for two real-world datasets, applied to the Kumaraswamy distribution and its corresponding WTRV. The test results indicate that the WTRV offers a superior fit compared to the Kumaraswamy distribution, which demonstrates the utility of the proposed framework.
\end{abstract}
\begin{keyword} reliability; time-to-event distribution; failure rate order; likelihood ratio order; goodness-of-fit.
\MSC[2020] Primary\ 62N05; 60E15,\: Secondary\ 62N02.
\end{keyword} 
\end{frontmatter} 
\section{Introduction} 
Probability models are fundamental tools for representing and analyzing a wide range of real-world data, including time-to-event data in survival analysis and failure-time data in reliability engineering.  Many probability distributions, including the beta, exponential, gamma, Kumaraswamy, logistic, and Weibull models, may sometimes fail to capture some features of certain real-world data. Their functional shapes may not consistently provide an optimal fit for specific data, indicating the necessity for the development of more adaptable models.  Researchers have proposed various generalizations of classical probability distributions to enhance flexibility and expand applicability across multiple disciplines. Many of these methods may achieve greater flexibility by introducing one or more new parameters to the baseline distribution, employing transformation techniques, or combining existing distributions. \par
\citet{barreto2010beta} introduced the beta generalized exponential model, which includes the beta exponential and generalized exponential models as its special cases. They presented a probability model for positive data and thoroughly investigated its properties and parameter estimation methods. \citet{ALEXANDER20121880} presented the generalized beta-generated (GBG) distributions, which include classical beta-generated, Kumaraswamy-generated, and exponentiated distributions as specific cases. It emphasized the flexibility attributes and improved modeling capabilities by providing an additional shape parameter, increasing central entropy while preserving tail dynamics. \citet{Cordeiro01072010} introduced a family of generalized probability models, which includes conventional models such as the gamma, normal, Weibull, and Gumbel distributions. They expressed the ordinary moments of the generalized distribution as linear functions of the probability weighted moments (PWMs) of the parent distribution, and illustrated the versatility of the new family through applications using real data. \citet{Eugene14052002} presented a new class of probability models derived from the logit of the beta distribution, highlighting the beta-normal distribution as a particular case. Their research demonstrated flexibility in modeling symmetric, skewed, and bimodal data, supported by maximum likelihood estimates and empirical analysis. Some researchers have proposed probability frameworks for constructing new probability distributions by leveraging one or more existing ones as foundational components. \citet{alzaatreh2013new} presented a novel framework for generating continuous distribution families using a transformation framework. The generated family is called the T--X family of probability models. Several well-known probability models are particular cases of the T--X family. For a comprehensive account of the generalized probability models and their applications, see \citet{mandouh2024new}, \citet{SAGRILLO2021127021}, \citet{Tahir2016CompoundingOD}, and the references therein.  To the best of our knowledge, no established probabilistic framework has been proposed in the literature that systematically harnesses the SF of time-to-event distributions as a foundational tool for innovating new classes of non-negative continuous distributions. \par
 In this paper, we adopt a novel approach to develop a framework for constructing non-negative continuous probability models. The proposed framework utilizes the SF of a non-negative continuous probability model and incorporates an additional function, referred to as a weight function, to formulate the PDF of another random variable. Such a framework can preserve some characteristics of the input random variable while offering enhanced flexibility and superior goodness-of-fit across distributions with differing asymmetry and tail behavior, compared to the input random variable of the framework. \par
Consider a random variable \(X\) with lower bound \(0\), upper bound \(u>0\), distribution function \(F_X(\cdot)\), and SF \(\overline{F}_X(\cdot) = 1 - F_X(\cdot)\). Further, consider a non-constant, increasing and differentiable function \(w: [0, u) \to [0, \infty)\) satisfying \(w(0) = 0\) and
\[
\int_{0}^{u} \int_{0}^{x} \left|w^{\prime}(t)\right| \, dt \, dF_X(x) < \infty.
\]
Then we define the random variable \(X_w\), referred to as the WTRV of \(X\) with weight function \(w(\cdot)\), by the PDF \(f_{X_w}(\cdot)\) as follows:
\begin{equation}
f_{X_w}(x) = \frac{w^{\prime}(x) \, \overline{F}_X(x)}{\mathbb{E}[w(X)]}, \quad 0\leq x <u, \label{1} 
\end{equation} 
where \(\mathbb{E}[w(X)]\) denotes the expectation of \(w(X)\), and if \(X\) is unbounded above, then \(u=\infty\). This framework enables the systematic construction of several well-known distributions and, in some cases, introduces new shape parameters. Examples include the WTRV of beta, Burr Type XII, gamma, Kumaraswamy, logistic, and Weibull distributions, each obtained by selecting an appropriate non-negative continuous random variable and suitable weight function (see \cref{table1}).\par
This framework encompasses an important class of random variables in renewal theory, namely, equilibrium random variables. The duration between a specified time \(t\) and the following renewal in a renewal process is known as the forward recurrence time, and the equilibrium distribution describes the limiting distribution of the forward recurrence time of the renewal process, also known as the asymptotic equilibrium distribution. Numerous studies have explored equilibrium random variables and their reliability properties in recent decades. Let \( X \) be a non-negative continuous random variable with a finite mean \( \mu > 0 \). If we set \( w(x) = x,\: x\geq 0\) in the proposed framework given by Equation~\eqref{1}, then the PDF of \(X_w\) becomes
 \begin{equation} 
 f_{X_w}(x)=\frac{\overline{F}_X(x)}{\mu}, \quad x\geq 0.\label{2}
 \end{equation} 
This PDF characterizes the equilibrium distribution of the variable \(X\); see \citet{bon2005ageing}. We denote the corresponding random variable by \(\widetilde{X}\). Hence, if \( w(x) = x,\: x\geq 0\), then the framework constructs the equilibrium random variable \(\widetilde{X}\). One may refer to \citet{bon2005ageing}, \citet{gupta2007role}, \citet{gupta1998bivariate}, \citet{li2008reversed}, \citet{Nanda1996sorder}, and the references therein for relevant literature on the equilibrium random variable. \par
We investigate various reliability properties of \(X_w\) by considering some reliability properties of \(X\) and some conditions on the weight function $w(\cdot)$. We also investigate the conditions on  \(X\) and \(w(\cdot)\) under which the aging properties of \(X\), such as decreasing failure rate (DFR), increasing failure rate (IFR), decreasing mean residual life (DMRL), and increasing mean residual life (IMRL) are preserved by \(X_w\). Subsequently, we assume that the variables \(X\) and \(Y\) are stochastically ordered and the function \(w(\cdot)\) satisfies certain properties; we then establish various stochastic ordering relationships between \(X_w\) and \(Y_w\). Furthermore, we investigate conditions on the stochastically ordered variables \( X \) and \( Y \), as well as on \( w(\cdot) \), under which \( X_w \) and \( Y_w \) preserve the presumed stochastic order between \( X \) and \( Y \). We specifically examine the preservation of presumed stochastic orderings between \(X\) and \(Y\), such as failure rate ordering, reversed failure rate ordering, and the usual stochastic ordering by the corresponding WTRVs \(X_w\) and \(Y_w\). Some of the results in this paper generalize existing results on equilibrium random variables. \par
By appropriately selecting the weight function \( w(\cdot)\), we may construct a variable \( X_w \) that provides superior fits to empirical data in contrast to the variable \( X\). As an application of the proposed framework, we constructed a novel distribution, termed the weighted Kumaraswamy distribution, by selecting the weight function \( w(x) = x^c \), \(0\leq x <1\), where \( c > 0 \), and assuming that \(X\) follows the Kumaraswamy distribution.  The resulting model, with three shape parameters, offers greater flexibility for modeling empirical data. Goodness-of-fit tests on monsoon rainfall data from India (1901–2021) suggest that the weighted Kumaraswamy distribution outperforms the Kumaraswamy distribution for modeling the mentioned rainfall data. This demonstrates the utility of the framework in constructing flexible distributions that can capture the characteristics of real-world data with greater accuracy. \par
The structure of this paper is as follows. Section~2 provides the notations, preliminary concepts, and lemmas useful for subsequent sections. Section~3 provides the methodology of the proposed framework and presents the WTRVs of some conventional distributions in \cref{table1}. Section~4 is dedicated to the analysis of various reliability properties of a random variable and its WTRV. Section~5 establishes various stochastic orders involving random variables and their WTRV. It also investigates conditions under which several stochastic orders between two random variables are preserved by their corresponding WTRVs. Section~6 provides an application of the proposed framework to demonstrate its usefulness in real-world data modeling. Section~7 concludes the paper.
\section{Notations, Preliminary Concepts, and Lemmas}
This section presents the notations, key concepts, and lemmas that are useful for the later sections of this paper. \par
The term ‘increasing (decreasing) function’ refers to a monotonically increasing (monotonically decreasing) function, unless stated otherwise. The PDF of a random variable \(X\) is denoted by \(f_X(\cdot)\); the cumulative distribution function (CDF) by \(F_X(\cdot)\); the SF by \(\overline{F}_X(\cdot)=1-F_X(\cdot)\); the failure rate function by \(r_X(\cdot)=f_X(\cdot)/\overline{F}_X(\cdot)\); the reversed failure rate function by \(\widetilde{r}_X(\cdot)=f_X(\cdot)/F_X(\cdot)\); the mean residual life (MRL) function by \(m_X(\cdot)=\int_{\cdot}^{\infty} \overline{F}_X(x) \, dx/\overline{F}_X(\cdot)\); the support of the PDF \(f_X(\cdot)\) of the variable \(X\) by \(S_X = (l_X,\, u_X)\), where \(l_X=\inf\{x\in \mathbb{R}\:: f_X(x)>0\}\) and \(u_X=\sup\{x\in\mathbb{R} \:: f_X(x)>0\}\), and we say that \(X\) has the support \(S_X\). The random variable \(X_{w}\), with its support \(S_1=(l_1,\,u_1)\) such that \(l_1=\inf\{x\in \mathbb{R}\: : f_{X_{w}}(x)>0\}\) and \(u_1=\sup\{x\in\mathbb{R} \:: f_{X_{w}}(x)>0\}\), denotes the WTRV of \(X\) with weight function \(w(\cdot)\). The variable \(Y_{w}\), with its support \(S_2=(l_2,\,u_2)\) such that \(l_2=\inf\{x\in \mathbb{R}\: : f_{Y_{w}}(x)>0\}\) and \(u_2=\sup\{x\in\mathbb{R} \:: f_{Y_{w}}(x)>0\}\), denotes the WTRV of \(Y\) with weight function \(w(\cdot)\).  From now on, \( X \) and \( Y \) denote non-negative, continuous random variables with respective supports \( S_X = (l_X,\,u_X) \) and \( S_Y = (l_Y,\,u_Y) \), unless specified otherwise. The functions \( w(\cdot) \), \( w_1(\cdot) \), and \( w_2(\cdot) \) denote weight functions, while their first derivatives are denoted by \( w'(\cdot) \), \( w_1'(\cdot) \), and \( w_2'(\cdot) \), respectively.\par
We now present several definitions that will be used in the subsequent sections of the paper.
\begin{definition}\label{definition1}(Definition 2 of \citet{LIU2020108863}) Consider the random variable \( X \) with its lower bound \( l_X \in \mathbb{R} \cup \{-\infty\} \) and upper bound \( u_X \in \mathbb{R} \cup \{\infty\} \). A non-constant function \( w(\cdot) \) is called absolutely integrable with respect to \( X \) if it fulfills one of the following conditions:
\begin{enumerate}
    \item[(i)] \( l_X \) is a finite lower bound of \( X \), and
    \[
    \int_{l_X}^{u_X} \int_{l_X}^{x} |w(t)| dt \, dF_X(x) < \infty.
    \]
    Let \( u_X \to \infty \) when \( X \)  is unbounded above.
    \item[(ii)] \( u_X \) is a finite upper bound of \( X \), and
    \[
    \int_{l_X}^{u_X} \int_{x}^{u_X} |w(t)| dt \, dF_X(x) < \infty.
    \]
    Let \( l_X \to -\infty \) when \( X \) is unbounded below.
\end{enumerate}
\end{definition}\par
We now present the definitions of several stochastic orderings between two random variables: (see \citet{Misra31012008})
\begin{definition}\label{definition2} Let \(S_X=(l_X,\,u_X)\), and \(S_Y=(l_Y,\,u_Y)\) be the respective supports of \(X\) and \(Y\). Then \(X\) is said to be less than \(Y\) in the
\begin{itemize}
    \item[(i)] likelihood ratio (lr) ordering (expressed as \( X \leq_{lr} Y \)), if \(\: f_Y(x_1)f_X(x_2) \leq f_X(x_1)f_Y(x_2)\) for all \(x_1,\,x_2\in (-\infty,\, \infty)\) such that \(x_1 \leq x_2\); or equivalently, if \(l_X \leq l_Y\), \(u_X \leq u_Y\), and \(f_Y(\cdot)/f_X(\cdot)\) is increasing on \(S_X \cap S_Y\);
    
    \item[(ii)] failure rate (fr) ordering (expressed as \( X \leq_{fr} Y \)), if \(\: \overline{F}_Y(x_1)\overline{F}_X(x_2) \leq \overline{F}_X(x_1)\overline{F}_Y(x_2)\) for all \(x_1,\,x_2\in (-\infty,\, \infty)\) such that \(x_1 \leq x_2\); or equivalently, if \( l_X \leq l_Y \), \( u_X \leq u_Y \), and \( r_X(x) \geq r_Y(x) \) for all \( x \in (-\infty, u_X) \); or equivalently, if \(\overline{F}_Y(\cdot)/\overline{F}_X(\cdot)\) is increasing on \((-\infty,\, \max(u_X,\,u_Y))\);
    
    \item[(iii)] reversed failure rate (rfr) ordering (expressed as \(X\leq_{rfr} Y \)), if \(\: F_Y(x_1)F_X(x_2) \leq F_X(x_1)F_Y(x_2)\) for all \(x_1,\,x_2\in (-\infty,\, \infty)\) such that \(x_1 \leq x_2\); or equivalently, if \( l_X \leq l_Y \), \( u_X \leq u_Y \), and \( \widetilde{r}_X(x) \leq \widetilde{r}_Y(x) \) for all \( x \in (l_Y, \infty) \); or equivalently, if \( F_{Y}(\cdot)/F_{X}(\cdot) \) is increasing on \((\min(l_X,l_Y),\,\infty)\);

    \item[(iv)] usual stochastic (st) ordering (expressed as \(X \leq_{st} Y\)), if \(\:\overline{F}_X(x) \leq \overline{F}_Y(x)\) for every \(x\in(-\infty,\, \infty)\); or equivalently, if \(l_X \leq l_Y\), \(u_X \leq u_Y\), and \( \overline{F}_X(x) \leq \overline{F}_Y(x)\) for all \(x \in S_X \cap S_Y\).
\end{itemize}
\end{definition}
We proceed by defining a log-concave [log-convex] function from \citet{Misra31012008}.
\begin{definition} \label{definition3} Let \( f : (a, b) \rightarrow (0, \infty) \) be a function such that \( \log f(\cdot) \) is bounded below [bounded above] in a neighborhood of a point \( x_0 \in (a, b) \). Then \( f \) is said to be \emph{log-concave} [\emph{log-convex}] on \( (a, b) \) if, for all \( 0 < \theta < b - a \), \(x_1\) and \(x_2\) such that \(a < x_1 < x_2 < b - \theta\), the following inequality holds:
\[
f(x_1 + \theta) f(x_2) \geq [\leq]\, f(x_1) f(x_2 + \theta).
\]\end{definition} \par
We now present definitions of some reliability properties of a random variable useful in the subsequent sections of this paper: (see \citet{Misra31012008}). 
\begin{definition}\label{definition4} Consider the variable \( X \) with the support \(S_X=(l_X,\,u_X)\). Then, \( X \) is said to be an
\begin{itemize}
  \item[(i)] increasing likelihood ratio (ILR) [decreasing likelihood ratio (DLR)] if the PDF \( f_X(\cdot) \) is log-concave [log-convex] on \( S_X \), or equivalently, if the Glaser function \(\eta_X(\cdot)=-f_X'(\cdot)/f_X(\cdot)\) is increasing [decreasing] on \(S_X\);
  
  \item[(ii)] IFR [DFR] if the SF \( \overline{F}_X(\cdot) \) is log-concave [log-convex] on \( S_X \), or equivalently, if \(r_X(\cdot)\) is increasing [decreasing] on \(S_X\);
  \item[(iii)] IMRL [DMRL] if \(m_X(\cdot)\) is increasing [decreasing] on \(S_X\). 
\end{itemize}
\end{definition}\par
One may refer to \citet{shaked2007stochastic} and the references therein for details of stochastic orderings and stochastic aging/reliability notions.\par

The following facts on aging properties of a random variable are useful: (see \citet{Misra31012008}).
\begin{enumerate}
    \item[(i)] If $f_X(\cdot)$ is log-concave, then $X$ has the IFR property;
    \item[(ii)] If $X$ has IFR property, then it has DMRL property;
    \item[(iii)] If $X$ has DFR property, then it has IMRL property, which further implies $u_X = \infty$, and hence \(S_X=(l_X, \infty)\);
    \item[(iv)] If $f_X(\cdot)$ is log-convex and $u_X = \infty$, then $X$ has DFR property.
\end{enumerate} \par
We now present two lemmas from \citet{Misra31012008}, which are used in subsequent sections to establish several theorems.
\begin{lemma}\label{lemma1} Consider the independent variables \( X \) and \( Y \) having their respective supports \(S_X=(l_X,\,u_X)\) and \(S_Y=(l_Y,\,u_Y)\). Then \( X \leq_{st} Y \) if and only if \( l_X \leq l_Y \), \( u_X \leq u_Y \), and \( \mathbb{E}[\phi_2(X,Y)] \geq \mathbb{E}[\phi_1(X,Y)] \) for all such functions \( \phi_2(\cdot,\cdot) \), \( \phi_1(\cdot,\cdot) \), and \( \Delta\phi_{21}(\cdot,\cdot) = \phi_2(\cdot,\cdot) - \phi_1(\cdot,\cdot) \) that satisfy conditions (i)-(iv) outlined below:
\begin{itemize}
 \item[(i)] \( \Delta\phi_{21}(x,y) \geq 0 \), whenever \( (x,y) \in \overline{S}_X \times \overline{S}_Y \) and \( x \leq y \);
  \item[(ii)] If \( S_X \cap S_Y \neq \emptyset \), then for every fixed \( x \in \overline{S}_X \), the function \( \Delta\phi_{21}(x,y) \) is non-decreasing in \( y \in [\max(x, l_Y), u_Y) \);
  \item[(iii)] If \( S_X \cap S_Y \neq \emptyset \), then for every fixed \( y \in \overline{S}_Y \), the function \( \Delta\phi_{21}(x,y) \) is non-increasing in \( x \in [l_X, \min(y, u_X)) \);
  \item[(iv)] If \( S_X \cap S_Y \neq \emptyset \), then \( \Delta\phi_{21}(x,y) +\Delta\phi_{21}(y,x)\geq 0 \) for all \( (x,y) \in (\overline{S}_X \cap \overline{S}_Y) \times (\overline{S}_X \cap \overline{S}_Y) \) such that \( x \leq y \);
\end{itemize}
where \( \overline{S}_X = [l_X, u_X] \) and \( \overline{S}_Y = [l_Y, u_Y] \) denote the closure of \(S_X\) and \(S_Y\), respectively.
\end{lemma}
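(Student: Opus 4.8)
The plan is to establish the stated equivalence by proving its two implications separately; the ``if'' direction is short, while the ``only if'' direction carries the real content. For the ``if'' direction, suppose the three listed conditions hold and fix an arbitrary \(t\in\mathbb{R}\). I would apply the hypothesis to the single pair \(\phi_2(x,y)=\mathbf{1}(y>t)\) and \(\phi_1(x,y)=\mathbf{1}(x>t)\), so that \(\Delta\phi_{21}(x,y)=\mathbf{1}(y>t)-\mathbf{1}(x>t)\). This difference is easily seen to satisfy (i)--(iv): when \(x\le y\) we have \(\mathbf{1}(y>t)\ge\mathbf{1}(x>t)\), giving (i); the map \(y\mapsto\mathbf{1}(y>t)\) is non-decreasing and \(x\mapsto-\mathbf{1}(x>t)\) is non-increasing, giving (ii) and (iii); and the expression is antisymmetric, so (iv) holds with equality. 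Since \(X\) and \(Y\) are independent, \(\mathbb{E}[\Delta\phi_{21}(X,Y)]=\overline{F}_Y(t)-\overline{F}_X(t)\), and the assumed inequality \(\mathbb{E}[\phi_2(X,Y)]\ge\mathbb{E}[\phi_1(X,Y)]\) gives \(\overline{F}_X(t)\le\overline{F}_Y(t)\). As \(t\) is arbitrary, \(X\le_{st}Y\) follows from part~(iv) of \cref{definition2}.

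For the ``only if'' direction, suppose \(X\le_{st}Y\), so that \(\overline{F}_X\le\overline{F}_Y\); this already forces \(l_X\le l_Y\) and \(u_X\le u_Y\) (otherwise one could find a point where \(\overline{F}_X>\overline{F}_Y\)), and it is equivalent to the pointwise quantile ordering \(F_X^{-1}\le F_Y^{-1}\). I would then introduce independent \(U,V\sim\mathrm{Unif}(0,1)\) and represent \(X=F_X^{-1}(U)\), \(Y=F_Y^{-1}(V)\), which recasts the goal as \(\int_0^1\int_0^1\psi(u,v)\,du\,dv\ge0\) with \(\psi(u,v)=\Delta\phi_{21}(F_X^{-1}(u),F_Y^{-1}(v))\). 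Because the unit square is invariant under swapping the two coordinates, this integral equals \(\tfrac12\int_0^1\int_0^1[\psi(u,v)+\psi(v,u)]\,du\,dv\), so it suffices to prove the pointwise symmetrized inequality \(\psi(u,v)+\psi(v,u)\ge0\). Fixing \(u\le v\) and writing \(x_1=F_X^{-1}(u)\le x_2=F_X^{-1}(v)\) and \(y_1=F_Y^{-1}(u)\le y_2=F_Y^{-1}(v)\), the quantile ordering also yields \(x_1\le y_1\) and \(x_2\le y_2\), and the target reduces to the four-point inequality \(\Delta\phi_{21}(x_1,y_2)+\Delta\phi_{21}(x_2,y_1)\ge0\).

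This four-point inequality is the crux, and I expect it to be the main obstacle, since neither argument need satisfy ``first coordinate \(\le\) second coordinate.'' I would dispose of it by splitting on the order of \(x_2\) and \(y_1\). If \(x_2\le y_1\), both summands are non-negative directly by (i). If instead \(x_2>y_1\), then \(x_1\le y_1<x_2\le y_2\) places \(x_2\) and \(y_1\) in the common closed support \(\overline{S}_X\cap\overline{S}_Y=[l_Y,u_X]\), so conditions (ii)--(iv) are in force: (iv) applied to the pair \((y_1,x_2)\) gives \(\Delta\phi_{21}(x_2,y_1)\ge-\Delta\phi_{21}(y_1,x_2)\), while (iii) (non-increasing in the first argument, via \(x_1\le y_1\)) and (ii) (non-decreasing in the second argument, via \(x_2\le y_2\)) give \(\Delta\phi_{21}(x_1,y_2)\ge\Delta\phi_{21}(y_1,y_2)\ge\Delta\phi_{21}(y_1,x_2)\); adding these two bounds yields the claim. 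The remaining work is routine bookkeeping: checking that the interval restrictions attached to (ii) and (iii) are met by these four points, and noting the degenerate case \(S_X\cap S_Y=\emptyset\), where \(x_2\le y_1\) always holds and only (i) is needed.
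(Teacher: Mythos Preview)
The paper does not actually prove this lemma; it is quoted verbatim from \citet{Misra31012008} (see the sentence ``We now present two lemmas from \citet{Misra31012008}\ldots'' immediately preceding \cref{lemma1}), so there is no in-paper argument to compare against. Judged on its own terms, your proposal is sound: the ``if'' direction via the indicator choice \(\phi_2(x,y)=\mathbf{1}(y>t)\), \(\phi_1(x,y)=\mathbf{1}(x>t)\) is the natural test pair, and for the ``only if'' direction the quantile coupling plus symmetrization reduces the problem to the four-point inequality, which your case split (\(x_2\le y_1\) handled by~(i); \(x_2>y_1\) handled by chaining (iii), (ii), and (iv)) disposes of correctly. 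The only loose ends are the half-open endpoints in conditions~(ii) and~(iii) (e.g., ensuring \(y_1<\min(y_2,u_X)\) when you invoke~(iii)), but these are measure-zero boundary cases that do not affect the double integral, as you note.
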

\begin{lemma}\label{lemma2} Consider the independent variables \( X \) and \( Y \) having their respective supports \(S_X=(l_X,\,u_X)\) and \(S_Y=(l_Y,\,u_Y)\). Then \( X \leq_{fr} Y \) if and only if \( l_X \leq l_Y \), \( u_X \leq u_Y \), and \(\mathbb{E}[\phi_2(X, Y)] \geq \mathbb{E}[\phi_1(X, Y)]\)
for all functions \( \phi_2(\cdot, \cdot) \), \( \phi_1(\cdot, \cdot) \), and $\Delta \phi_{21}(\cdot, \cdot) = \phi_2(\cdot, \cdot) - \phi_1(\cdot, \cdot)$ that satisfy conditions (i)-(iii) outlined below:
\begin{itemize}
 \item[(i)] \( \Delta \phi_{21}(x, y) \geq 0 \), for all \( (x, y) \in S_X \times S_Y \) such that \( x \leq y \);
  \item[(ii)] If \( S_X \cap S_Y \neq \emptyset \), then for every fixed \( x \in S_X \cap S_Y \), the function \( \Delta \phi_{21}(x, y) \) is increasing in \( y \in [x, u_Y) \);
  \item[(iii)] If \( S_X \cap S_Y \neq \emptyset \), then for all \( (x, y) \in (S_X \cap S_Y) \times (S_X \cap S_Y) \) such that \( x \leq y \),  The following inequality holds: \[ \Delta \phi_{21}(x, y)  +\Delta \phi_{21}(y, x)\geq 0.\]
\end{itemize}
\end{lemma}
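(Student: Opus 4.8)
The statement is an \emph{if and only if}, so the plan is to prove both implications. The forward direction (necessity) assumes $X \le_{fr} Y$ and must verify $\mathbb{E}[\Delta\phi_{21}(X,Y)] \ge 0$ for \emph{every} admissible pair $(\phi_1,\phi_2)$, while the reverse direction (sufficiency) assumes the expectation inequality for all admissible pairs and must recover that $\overline{F}_Y(\cdot)/\overline{F}_X(\cdot)$ is increasing. I would dispose of sufficiency first, since it only requires exhibiting one convenient family of test functions, and then attack necessity, which carries the analytic weight.

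For sufficiency, fix any $x_1 \le x_2$ and set $\phi_2(x,y) = \mathbf{1}(x > x_1,\, y > x_2)$ and $\phi_1(x,y) = \mathbf{1}(x > x_2,\, y > x_1)$, so that $\Delta\phi_{21}(x,y) = \mathbf{1}(x>x_1,y>x_2) - \mathbf{1}(x>x_2,y>x_1)$. I would check conditions (i)--(iii) for this choice: (i) holds because $x \le y$ together with $x > x_2 \ge x_1$ forces $y > x_2$, so the negative indicator can fire only when the positive one does; (ii) holds because for fixed $x$ the only troublesome range is $x > x_2$, where the restriction $y \ge x > x_2$ makes $\Delta\phi_{21}(x,\cdot)$ identically $0$; and (iii) holds because in $\Delta\phi_{21}(x,y)+\Delta\phi_{21}(y,x)$ the two ``crossed'' indicators coincide and cancel, leaving $0$. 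Since $X,Y$ are independent, the hypothesis then yields $\mathbb{E}[\phi_2(X,Y)] - \mathbb{E}[\phi_1(X,Y)] = \overline{F}_X(x_1)\overline{F}_Y(x_2) - \overline{F}_X(x_2)\overline{F}_Y(x_1) \ge 0$, i.e. $\overline{F}_Y/\overline{F}_X$ is nondecreasing; combined with the assumed $l_X \le l_Y$ and $u_X \le u_Y$, this is exactly $X \le_{fr} Y$.

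For necessity, I would first symmetrize. Splitting the domain of $\mathbb{E}[\Delta\phi_{21}(X,Y)]=\iint \Delta\phi_{21}(x,y)f_X(x)f_Y(y)\,dx\,dy$ along $\{x\le y\}$ and $\{x>y\}$ and relabelling the variables in the second piece gives
\[
\mathbb{E}[\Delta\phi_{21}(X,Y)] = \iint_{x \le y}\big[\Delta\phi_{21}(x,y) f_X(x) f_Y(y) + \Delta\phi_{21}(y,x) f_X(y) f_Y(x)\big]\,dx\,dy .
\]
Using $l_X \le l_Y$ and $u_X \le u_Y$, the second summand is supported only on the overlap $O = (l_Y, u_X)$, while on the complementary region the surviving term is $\Delta\phi_{21}(x,y) f_X(x) f_Y(y)$ with $(x,y)\in S_X\times S_Y$ and $x \le y$, hence nonnegative by (i). This reduces the problem to showing $\iint_{O\times O}\Delta\phi_{21}(x,y) f_X(x) f_Y(y)\,dx\,dy \ge 0$. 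On this square I would decompose $\Delta\phi_{21}$ into its symmetric part $s = \tfrac12(\Delta\phi_{21}(x,y)+\Delta\phi_{21}(y,x))$ and antisymmetric part $a = \tfrac12(\Delta\phi_{21}(x,y)-\Delta\phi_{21}(y,x))$: the symmetric contribution integrates against $f_X(x)f_Y(y)+f_X(y)f_Y(x) \ge 0$ and is nonnegative by (iii), while the antisymmetric contribution integrates against $f_X(x)f_Y(y)-f_X(y)f_Y(x) = \partial_x\partial_y D(x,y)$, where $D(x,y) = \overline{F}_X(x)\overline{F}_Y(y) - \overline{F}_X(y)\overline{F}_Y(x)$ is precisely the failure-rate discrepancy, which is nonnegative for $x \le y$ by the assumed order, antisymmetric, and vanishing on the diagonal.

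The main obstacle is this last step, namely showing $\iint a\,\partial_x\partial_y D \ge 0$. My plan is to integrate by parts in $x$ and in $y$, moving the derivatives off $D$; the boundary terms should vanish because $D \equiv 0$ on the diagonal $x=y$ and $\overline{F}_X,\overline{F}_Y$ vanish at the upper endpoint $u_X$, and the remaining interior term is controlled by the monotonicity in condition (ii) together with (iii), which sign the relevant increments of $a$ against the nonnegative $D$. Keeping careful track of these boundary contributions, and confirming that (ii) and (iii) deliver exactly the sign needed after the integration by parts, is the delicate point; a cleaner but less self-contained alternative would be to argue that every $\Delta\phi_{21}$ meeting (i)--(iii) lies in the closed convex cone generated by the indicator differences used in the sufficiency step (for each of which the inequality is the failure-rate condition itself), so that necessity follows by a mixture/limiting argument, in direct parallel with the proof of \cref{lemma1}.
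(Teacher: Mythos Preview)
The paper does not prove \cref{lemma2}: it is quoted verbatim from \citet{Misra31012008} as a known tool, with no argument supplied. Hence there is no ``paper's own proof'' against which your attempt can be compared; your sketch stands on its own rather than as a reconstruction of something in the text.

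On the substance of your sketch: the sufficiency direction is clean and correct---the indicator pair you exhibit satisfies (i)--(iii) and recovers exactly the inequality $\overline{F}_X(x_1)\overline{F}_Y(x_2)\ge\overline{F}_X(x_2)\overline{F}_Y(x_1)$. The necessity direction is only an outline: the symmetrization and the treatment of the symmetric part via (iii) are fine, but the antisymmetric part is left at the level of ``integrate by parts and hope (ii)--(iii) control the signs,'' and you yourself flag this as the delicate point. Note that condition (ii) gives monotonicity of $\Delta\phi_{21}(x,\cdot)$ in the second argument only, so after integration by parts you will need to manufacture the missing control in the first argument out of (iii) and the antisymmetry; making that precise (or, alternatively, carrying out the convex-cone/mixture argument you allude to) is where the real work lies, and your write-up does not yet do it.
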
\par
The following lemma from \citet[p.~20]{ThesisNGupta2009} will be used later to establish a theorem concerning the reversed failure rate order.
\begin{lemma}\label{lemma3} Consider the independent variables \( X \) and \( Y \) having their respective supports \(S_X=(l_X,\,u_X)\) and \(S_Y=(l_Y,\,u_Y)\). Then \( X \leq_{rfr} Y \) if and only if \( l_X \leq l_Y \), \( u_X \leq u_Y \), and \(\mathbb{E}[\phi_2(X, Y)] \geq \mathbb{E}[\phi_1(X, Y)]\)
for all functions \( \phi_2(\cdot, \cdot) \), \( \phi_1(\cdot, \cdot) \), and $\Delta \phi_{21}(\cdot, \cdot) = \phi_2(\cdot, \cdot) - \phi_1(\cdot, \cdot)$ satisfying conditions (i)-(iii) outlined below:
\begin{itemize}
 \item[(i)] \( \Delta \phi_{21}(x, y) \geq 0 \), for all \( (x, y) \in S_X \times S_Y \) such that \( x \leq y \);
  \item[(ii)] If \( S_X \cap S_Y \neq \emptyset \), then for every fixed \( y \in S_X \cap S_Y \), the function \( \Delta \phi_{21}(x, y) \) is decreasing in \( x \in (l_X, y)\);
  \item[(iii)] If \( S_X \cap S_Y \neq \emptyset \), then for all \( (x, y) \in (S_X \cap S_Y) \times (S_X \cap S_Y) \) such that \( x \leq y \), The following inequality holds: \[ \Delta \phi_{21}(x, y)  +\Delta \phi_{21}(y, x)\geq 0.\]
\end{itemize}
\end{lemma}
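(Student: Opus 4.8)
The statement is an equivalence, so the plan is to prove the two implications separately. I expect the argument to mirror the proof of \cref{lemma2} for the failure rate order, with the roles of the upper and lower tails interchanged: the reversed failure rate order controls the distribution function $F$ and its lower tail, whereas the failure rate order controls $\overline{F}$ and the upper tail. This duality is already visible in the hypotheses, since condition (ii) here requires monotonicity of $\Delta\phi_{21}(x,y)$ in the \emph{first} argument $x$ on $(l_X,y)$, dual to the monotonicity in $y$ appearing in \cref{lemma2}.

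For sufficiency I would assume the support conditions together with the expectation inequality for every admissible $\phi$, and recover $X\leq_{rfr}Y$ by exhibiting explicit test functions. Fixing $x_1\leq x_2$, set $\phi_2(x,y)=\mathbf 1(x\leq x_1)\,\mathbf 1(y\leq x_2)$ and $\phi_1(x,y)=\mathbf 1(x\leq x_2)\,\mathbf 1(y\leq x_1)$, so that by independence $\mathbb E[\phi_2(X,Y)]=F_X(x_1)F_Y(x_2)$ and $\mathbb E[\phi_1(X,Y)]=F_X(x_2)F_Y(x_1)$. I would then check that $\Delta\phi_{21}=\phi_2-\phi_1$ satisfies (i)--(iii): (i) holds because $x\leq y$ forces the subtracted indicator to be dominated by the first; (ii) holds after a short case split on the location of the fixed $y$ relative to $x_1$ and $x_2$, each case yielding a (weakly) decreasing step function of $x$; and (iii) holds because the four indicators cancel in pairs, giving $\Delta\phi_{21}(x,y)+\Delta\phi_{21}(y,x)=0$. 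The hypothesis then reads $F_X(x_1)F_Y(x_2)\geq F_X(x_2)F_Y(x_1)$, which is the defining inequality of $\leq_{rfr}$ in \cref{definition2}; since $x_1\leq x_2$ were arbitrary, $X\leq_{rfr}Y$.

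For necessity I would assume $X\leq_{rfr}Y$. The support conditions $l_X\leq l_Y$ and $u_X\leq u_Y$ come directly from the equivalent characterization in \cref{definition2}(iii), so it remains to show $\mathbb E[\Delta\phi_{21}(X,Y)]\geq 0$. Writing the expectation as a double integral against $f_X(x)f_Y(y)$, splitting along the diagonal, and relabelling $x\leftrightarrow y$ on the region $x>y$, I would reduce the problem to showing
\[
\iint_{x<y}\bigl[\Delta\phi_{21}(x,y)f_X(x)f_Y(y)+\Delta\phi_{21}(y,x)f_X(y)f_Y(x)\bigr]\,dx\,dy\geq 0.
\]
A support bookkeeping shows the second integrand vanishes unless both $x,y$ lie in the common support $C=(l_Y,u_X)$ (since $f_X(y)f_Y(x)\neq0$ with $x<y$ forces $l_Y<x<y<u_X$), whereas on all remaining regions only the first term survives, where $(x,y)\in S_X\times S_Y$ with $x\leq y$, so condition (i) makes the integrand nonnegative. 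The disjoint case $S_X\cap S_Y=\emptyset$ is immediate, because then $u_X\leq l_Y$, hence $X<Y$ almost surely and (i) applies throughout.

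The crux, and the step I expect to be the main obstacle, is the common-support square $C\times C$, where both terms are present and condition (i) no longer controls $\Delta\phi_{21}(y,x)$ because its arguments satisfy $y>x$. There condition (iii) bounds only the symmetric sum $\Delta\phi_{21}(x,y)+\Delta\phi_{21}(y,x)\geq 0$, while the two density weights $f_X(x)f_Y(y)$ and $f_X(y)f_Y(x)$ differ; a naive pointwise bound would require $f_X(x)f_Y(y)\geq f_X(y)f_Y(x)$ for $x<y$, i.e.\ the stronger likelihood ratio order, which is not available. The plan is therefore to invoke the monotonicity in condition (ii) to integrate by parts (equivalently, to rearrange monotonically) in $x$, thereby converting the density products into distribution-function products; at that stage the reversed failure rate inequality $F_Y(x_1)F_X(x_2)\leq F_X(x_1)F_Y(x_2)$ for $x_1\leq x_2$ supplies exactly the required sign. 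Reconciling the pointwise symmetry condition (iii) with the \emph{integrated} reversed failure rate condition through (ii), while carefully handling the endpoints of $C$ and the integrability hypothesis guaranteeing convergence, is where I anticipate the principal technical difficulty.
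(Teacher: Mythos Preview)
The paper does not prove \cref{lemma3}: it is quoted verbatim from \citet[p.~20]{ThesisNGupta2009} and used as a black box in the proof of \cref{theorem10}, so there is no in-paper argument against which to compare your proposal.

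That said, your outline is the standard one and is sound. The sufficiency direction with the indicator test functions $\phi_2(x,y)=\mathbf 1(x\le x_1)\mathbf 1(y\le x_2)$ and $\phi_1(x,y)=\mathbf 1(x\le x_2)\mathbf 1(y\le x_1)$ is correct; your case checks for (i)--(iii) go through, and the resulting inequality $F_X(x_1)F_Y(x_2)\ge F_X(x_2)F_Y(x_1)$ is precisely the rfr defining relation. For necessity, your diagonal split and support bookkeeping are right, and the step you flag as the crux---using the monotonicity in (ii) to pass from density weights to distribution-function weights on the common-support square---is exactly the mechanism that makes the argument work: writing, for fixed $y\in C$, $\int_{l_X}^{y}\Delta\phi_{21}(x,y)f_X(x)\,dx$ as a Stieltjes integral against $F_X$ and integrating by parts in $x$ produces an integrand carrying the factor $F_X$; after the analogous manipulation of the swapped term and an application of (iii) to the symmetric part, what remains is controlled by $F_X(x)F_Y(y)-F_X(y)F_Y(x)\ge 0$ for $x\le y$, which is the rfr hypothesis. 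The boundary contributions at $x=l_X$ and $x=y$ are handled by (i). So the anticipated difficulty is real but resolves along the lines you sketch; the argument is the exact dual of the fr-order proof of \cref{lemma2} in \citet{Misra31012008}, with $F$ replacing $\overline F$ and lower-tail integration replacing upper-tail integration.
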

\section{Methodology of the Framework} 
This framework constructs the PDF of a new random variable utilizing:  
\begin{itemize} 
    \item[(i)] the SF \( \overline{F}_X(\cdot) \) of a non-negative continuous random variable \( X \), and;
    \item[(ii)] a weight function \(w(\cdot)\) that is increasing and differentiable on \((0,\, u_X)\) satisfies \(w(0)=0\), and has its first derivative absolutely integrable with respect to \(X\), where \(u_X\) is the upper bound of \(X\).
\end{itemize}
The foundation of this framework is built on well-established theoretical results, which are presented chronologically to support its development. The formula for the expectation of a non-negative variable \( X \) is given as  
 \begin{equation*}
    \mathbb{E}[X] = \int_{0}^{\infty} \overline{F}_X(x) \,dx.
\end{equation*}
This formula is often called an ‘alternative formula’ for computing the expectation of  \(X\). A generalization of the above result given by \citet{hong2012remark} and states that if  \(X\) has finite \(k\)-th moment, that is, \(\mathbb{E}|X^k| < \infty\) for any integer \( k > 0 \), then \( \mathbb{E}[X^k] \) is given by:  
\begin{equation}
    \mathbb{E}[X^k] = \int_{0}^{\infty} kx^{k-1} \overline{F}_X(x) \,dx. \label{3}
\end{equation}
 \citet{ogasawara2020alternative} provides an alternative formula for the expectation of a strictly increasing function of a continuous variable. \par
A further generalization of the alternative formula for computing the expectation of a function of \(X\) is established in \citet{LIU2020108863}. The formula is given by the following lemma: (see Theorem 1 of \citet{LIU2020108863}). 
\begin{lemma}\label{lemma4} Let the continuous variable \( X \) has support \( (l_X, u_X) \subset \mathbb{R} \). Let a differentiable function \( w(\cdot) \) be defined on \( (l_X, u_X) \) with its first derivative absolutely integrable with respect to \(X\).
\item If \( l_X \) is real-valued, then
\begin{align}
\mathbb{E}[w(X)] = w(l_X) + \int_{l_X}^{u_X} w'(x) \overline{F}_X(x) \, dx. \label{4}
 \end{align}
Let \( u_X \to \infty \) when \( X \)  is unbounded above.
\end{lemma}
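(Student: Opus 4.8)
The plan is to reduce the claim to the fundamental theorem of calculus followed by a single interchange of the order of integration. Starting from the definition $\mathbb{E}[w(X)] = \int_{l_X}^{u_X} w(x)\,dF_X(x)$, I would first invoke the fundamental theorem of calculus to write $w(x) = w(l_X) + \int_{l_X}^{x} w'(t)\,dt$ for every $x \in (l_X, u_X)$. This representation is legitimate because $w$ is differentiable on the support with a locally integrable derivative, so $w$ is recovered from $w'$ by integration and, in particular, $w(l_X)$ is the finite limit of $w$ at the (real-valued) lower endpoint.

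Substituting this representation into the expectation and splitting gives two pieces. The first is $w(l_X)\int_{l_X}^{u_X} dF_X(x) = w(l_X)$, since $F_X$ is a probability distribution supported on $(l_X, u_X)$. The second is the double integral $\int_{l_X}^{u_X}\!\int_{l_X}^{x} w'(t)\,dt\,dF_X(x)$, taken over the triangular region $\{(x,t) : l_X < t < x < u_X\}$.

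The crux of the argument is to interchange the order of integration in this double integral. Rewriting it with $t$ as the outer variable turns the region into $\{(t,x) : l_X < t < u_X,\ t < x < u_X\}$, so the inner integral becomes $\int_{t}^{u_X} dF_X(x) = \overline{F}_X(t)$, yielding $\int_{l_X}^{u_X} w'(t)\,\overline{F}_X(t)\,dt$ and hence the claimed identity. I expect this interchange to be the main obstacle, and it is precisely where the hypothesis is consumed: the absolute integrability of $w'$ with respect to $X$ (\cref{definition1}(i), which applies here because $l_X$ is finite) asserts exactly that $\int_{l_X}^{u_X}\int_{l_X}^{x}|w'(t)|\,dt\,dF_X(x) < \infty$. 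This finiteness is the hypothesis of the Fubini--Tonelli theorem for the product of Lebesgue measure and the measure $dF_X$, so it simultaneously guarantees that $\mathbb{E}[w(X)]$ is well defined and licenses swapping the two integrations.

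Finally, when $X$ is unbounded above I would interpret $u_X = \infty$ via the stated convention, treating every integral as an improper integral up to $u_X$ and passing to the limit $u_X \to \infty$; the absolute integrability condition dominates the tail uniformly, so the limiting argument introduces no difficulty beyond what is already handled above, and the formula holds verbatim.
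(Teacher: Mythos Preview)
Your argument is correct and is the standard one: rewrite $w(x)$ via the fundamental theorem of calculus and then apply Fubini--Tonelli, with the absolute-integrability hypothesis from \cref{definition1}(i) furnishing exactly the domination needed to justify the swap. Note that the paper does not actually supply its own proof of this lemma; it simply attributes the result to \citet{LIU2020108863}, so there is nothing further to compare against beyond observing that your outline is precisely the expected derivation.
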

This lemma is crucial for the proposed framework. \cref{lemma4} indicates that the expectation of \(w(X)\) can be formulated using the derivative of \(w(\cdot)\) and the SF of \(X\), provided that \(w(\cdot)\) satisfies the conditions of \cref{lemma4}. This lemma is proved in \citet{LIU2020108863}.
\subsection{\textbf{Derivation of the Framework}}
By setting \( l_X = 0 \), \( w(0) = 0 \) in \cref{lemma4}, Equation~\eqref{4} becomes:
\begin{equation}
    \mathbb{E}[w(X)] = \int_{0}^{u_X} w'(x) \overline{F}_X(x) \,dx. \label{5}
\end{equation}
We further assume that the function \(w(\cdot)\), referred to as a weight function, is increasing; this condition is besides the assumptions on \(w(\cdot)\) mentioned in \cref{lemma4}. Then, clearly, the expectation \( \mathbb{E}[w(X)] \) is positive.
 We divide both sides of Equation~\eqref{5} by \( \mathbb{E}[w(X)] \), resulting in:
\begin{equation}
    \int_{0}^{u_X} \frac{w^{\prime}(x)\overline{F}_X(x)}{\mathbb{E}[w(X)]} \,dx = 1.\label{6}
\end{equation}
We denote the integrand of Equation~\eqref{6} by \(f_{X_w}(x)\). The function \( f_{X_w}(\cdot) \) satisfies the following:
\begin{itemize}
    \item[(i)] both \( w'(x) \) and \( \overline{F}_X(x) \) are non-negative, so \( f_{X_w}(x) \geq 0 \) for all \( x \in (0 \,,\,u_X)\);
    \item[(ii)] by construction, \[ \int_{0}^{u_X} f_{X_w}(x) \, dx = 1\,. \]
\end{itemize}
Hence, \( f_{X_w}(\cdot) \) defines a PDF of a non-negative continuous probability model; let us denote the corresponding variable by \( X_w \) and refer to it as the WTRV of \( X \) with weight function \( w(\cdot) \). This methodology provides a systematic approach to construct a new probability model using the SF of a variable \(X\) and a weight function \(w(\cdot)\) defined on \([0, \,u_X)\). \par
\cref{table1} applies the proposed framework to construct several well-known probability models.
\begin{table}[H]
\caption{\large Construction of some distributions employing the proposed framework}
\label{table1} 
\centering
\renewcommand{\arraystretch}{2.6} 
{\footnotesize \begin{tabular}{cccccc}  
    \toprule
    \(X\sim\) & $\overline{F}_X(x),\,x\geq0$ & \(w(x),\,x\geq 0\) &  \(f_{X_w}(x)=\frac{w'(x)\overline{F}_X(x)}{\mathbb{E}[w(X)]}\) & \(X_w\sim\) \\  
    \midrule 
    Exponential($\lambda$) & \(e^{-\lambda x},\,\lambda>0 \) & \(w_1(x)=x\) & \(\lambda e^{-\lambda x}\) & Exponential($\lambda$)  \\
    Exponential($\lambda$) &  \(e^{-\lambda x},\,\lambda>0\) & \(w_2(x)=x^k,\,k>0\) & \(\frac{\lambda^k x^{k-1}e^{-\lambda x}}{\Gamma(k)}\) & Gamma$(k, \lambda)$ \\ 
    Weibull$(\alpha,\beta)$ &\(e^{-(x/\beta)^\alpha},\,\alpha,\beta>0\) & \( w_3(x)=(x/\beta)^\alpha\) & \(\frac{\alpha}{\beta} \left(\frac{x}{\beta}\right)^{\alpha-1} e^{-\left(\frac{x}{\beta}\right)^\alpha}\)& Weibull$(\alpha,\beta)$ \\  
     $F(x, \beta),\,x\in (0,\,1)$ & $(1-x)^{\beta-1},\:\beta>1$ & \(w_4(x)=x^\alpha,\,\alpha>0\) & \( \frac{x^{\alpha - 1} (1 - x)^{\beta - 1}}{B(\alpha,\,\beta)}\) & Beta$(\alpha, \beta)$ \\  
      Exponential($\frac{1}{2}$) &  \(e^{-\frac{x}{2}}\) & \(w_5(x)=x^{\frac{k}{2}},\,k>0\) & \(\frac{(\frac{1}{2})^k x^{\frac{k}{2}-1}e^{-\frac{x}{2}}}{\Gamma(\frac{k}{2})}\) & Chi-Square$(k)$ \\ 
      Rayleigh\((\sigma)\) &\(e^{-\frac{x^2}{2\sigma^2}},\,\sigma>0\) & \( w_6(x)=\frac{x^2}{2\sigma^2}\) & \(\frac{x}{\sigma^2} e^{-x^2/2\sigma^2}\)& Rayleigh$(\sigma)$ \\ 
      Weibull\((2, \sqrt{2} \sigma)\)& \(e^{-x^2/2\sigma^2},\,\sigma>0\)& \(w_7(x)=\frac{x}{\sqrt{2} \sigma}\)&\(\frac{\sqrt{2}}{\sigma\sqrt{\pi}} e^{-x^2/2\sigma^2}\)& Half-Normal$(\sigma)$ \\
    Weibull$(p,a)$ &\(e^{-(x/a)^p},\,a,\,p>0\) & \( w_8(x)=(x/a)^p\) & \(\frac{px^{d-1} e^{-\left(\frac{x}{a}\right)^p}}{a^d\Gamma(\frac{d}{p})} \)& \begin{minipage}{0.2 \textwidth} Generalized\\Gamma$(p,a,d)$
\end{minipage} \\  
    \begin{minipage}{0.2 \textwidth} Burr type XII$(c,\,k)$
\end{minipage} & \( \left(1 + x^c \right)^{-k},\,c,\,k>0 \) & \(w_9(x)=\log(1+x^c)\) & \(\frac{ckx^{c-1}}{(1+x^c)^{k+1}}\) & \begin{minipage}{0.2 \textwidth} Burr~type~XII$(c,\,k)$
\end{minipage} \\ 
   \begin{minipage}{0.2 \textwidth} Burr~type~XII$(c,\,k)$
\end{minipage} & \( \left(1 + x^c \right)^{-k},\,c,\,k>0 \) & \(w_{10}(x)=x^a,\,a>0\) & \(\frac{ax^{a-1}(1+x^c)^{-k}}{kB((ck-a)/c,\, (c+a)/c)}\) & \begin{minipage}{0.2 \textwidth} WTRV of\\ Burr type XII$(c,\,k)$
\end{minipage}\\ 
    Kumaraswamy(a,\,b) & \((1-x^a)^b,\,a,\,b>0\) &\(w_{11}(x)=x^a \) & \(abx^{a-1}(1-x^a)^b\) & Kumaraswamy(a,b)\\
     Kumaraswamy(a,b) & \((1-x^a)^b,\,a,\,b>0\) &\(w_{12}(x)=x^c,\,c>0 \) & \(\frac{cx^{c-1}(1-x^a)^b}{b\cdot B(1+c/a,\:b)}\) & \begin{minipage}{0.2 \textwidth} Weighted\\ Kumaraswamy$(a,\,b,\,c)$
\end{minipage} \\\\
    \bottomrule
\end{tabular}}
\end{table}
\section{Reliability Properties of a Random Variable and Its WTRV}
Here we investigate several reliability properties of a random variable \(X\) and its WTRV \(X_w\). Initially, we assume some reliability properties of \(X\) and provide a sufficient condition on the weight function \(w(\cdot)\) to construct \(X_w\) that possesses the ILR [DLR] property. Furthermore, we investigate the sufficient conditions on \(X\) and \(w(\cdot)\) to ensure the preservation of reliability properties of \(X\) by \(X_w\). In particular, we focus on whether aging properties such as IFR, DFR, IMRL, and DMRL of the variable \(X\) are preserved by \(X_w\).  This study may help us to construct a WTRV \(X_w\) that belongs to a prespecified aging class. The significance of studying the aging relationship between \( X \) and \( X_w \) lies in the fact that many reliability properties of \( X_w \) may be directly inferred from those of \( X \) and the property of \(w(\cdot)\). This may facilitate the reliability analysis of some random variables more straightforwardly, despite their survival or failure rate functions not admitting closed-form expressions. \par
The following theorem assumes \(X\) is IFR [DFR] and \(w'(\cdot)\) is log-concave [log-convex] to ensure that \(X_w\) belongs to the ILR [DLR] aging class.
\begin{proposition} \label{proposition1} If
\begin{enumerate}
\item[(a)] $X$ is IFR [DFR], and
\item[(b)] $w'(\cdot)$ is a log-concave [log-convex] function on \(S_X\),
\end{enumerate}
then $X_w$ is ILR [DLR], that is, \(\eta_{X_w}(\cdot)=-f_{X_w}'(\cdot)/f_{X_w}(\cdot)\) is an increasing [decreasing] function.
\end{proposition}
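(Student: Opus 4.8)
The plan is to prove the statement by showing directly that the density $f_{X_w}(\cdot)$ is log-concave [log-convex] on its support and then invoking the equivalence recorded in \cref{definition4}(i) between the ILR [DLR] property and log-concavity [log-convexity] of the density. First I would note that $\mathbb{E}[w(X)]$ is a positive constant, so by the definition of the WTRV density in \eqref{1}, the log-concavity [log-convexity] of $f_{X_w}(\cdot)$ is equivalent to that of the product $g(x):=w'(x)\,\overline{F}_X(x)$. Hypothesis (a) together with \cref{definition4}(ii) yields that $\overline{F}_X(\cdot)$ is log-concave [log-convex] on $S_X$, while hypothesis (b) supplies the same property for $w'(\cdot)$. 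Thus the entire claim reduces to the assertion that a product of two log-concave [log-convex] functions is again log-concave [log-convex] in the sense of \cref{definition3}.

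Second, I would establish this product-preservation property straight from \cref{definition3}. Fixing $\theta>0$ and $x_1<x_2$ in the admissible range, the defining inequalities read $w'(x_1+\theta)\,w'(x_2)\ge w'(x_1)\,w'(x_2+\theta)$ and $\overline{F}_X(x_1+\theta)\,\overline{F}_X(x_2)\ge \overline{F}_X(x_1)\,\overline{F}_X(x_2+\theta)$. Since every factor is strictly positive, multiplying these two inequalities term by term produces $g(x_1+\theta)\,g(x_2)\ge g(x_1)\,g(x_2+\theta)$, which is exactly the log-concavity inequality for $g$; the log-convex case is identical with all inequalities reversed. I would also record that $\log g=\log w'+\log\overline{F}_X$ inherits boundedness from below [above] near a point, so the standing hypothesis of \cref{definition3} is satisfied by $g$ as well.

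Combining the two steps, $g$, and hence $f_{X_w}(\cdot)$, is log-concave [log-convex], so \cref{definition4}(i) delivers that $\eta_{X_w}(\cdot)=-f_{X_w}'(\cdot)/f_{X_w}(\cdot)$ is increasing [decreasing], i.e. $X_w$ is ILR [DLR]. As a sanity check, when $w(\cdot)$ is moreover twice differentiable the same conclusion is transparent from the computation $\eta_{X_w}(x)=r_X(x)-w''(x)/w'(x)$: under the hypotheses $r_X(\cdot)$ is increasing [decreasing] while $-w''(\cdot)/w'(\cdot)$ is increasing [decreasing], so $\eta_{X_w}(\cdot)$ is a sum of two monotone functions of the same type. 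I would nevertheless present the argument through \cref{definition3} so as not to require the existence of $w''(\cdot)$.

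The step I expect to demand the most care is not the algebra but the bookkeeping of supports: the hypotheses (a) and (b) are imposed on $S_X$, whereas the conclusion concerns log-concavity of $f_{X_w}(\cdot)$ on the support $S_1$ of $X_w$, so I must verify that $S_1$ coincides with (or is contained in) the interval on which both $\overline{F}_X(\cdot)$ and $w'(\cdot)$ are positive and of the asserted concavity type. In particular, log-concavity of $w'(\cdot)$ forces $w'>0$, hence $w(\cdot)$ is strictly increasing, which is consistent with the framework's standing assumption and guarantees $g>0$ on the relevant interval; confirming this alignment of $S_1$ with $S_X$ is the only genuinely delicate point.
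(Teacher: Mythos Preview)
Your proposal is correct and follows essentially the same route as the paper: both arguments verify the log-concavity [log-convexity] inequality of \cref{definition3} for $f_{X_w}$ by writing $\mathbb{E}[w(X)]^2\bigl(f_{X_w}(x_1+\theta)f_{X_w}(x_2)-f_{X_w}(x_1)f_{X_w}(x_2+\theta)\bigr)$ in terms of $w'$ and $\overline{F}_X$ and then multiplying the two separate log-concavity [log-convexity] inequalities supplied by hypotheses (a) and (b). Your additional remarks on the boundedness hypothesis in \cref{definition3}, the alignment of $S_1$ with $S_X$, and the differentiable sanity check $\eta_{X_w}=r_X-w''/w'$ go slightly beyond what the paper records but are consistent with it.
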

\begin{proof} To prove that \(X_w\) is ILR [DLR], we need to show that \(f_{X_w}(\cdot)\) is a log-concave [log-convex] function on its support \(S_1 = (l_1,\, u_1)\). To proceed, let \( 0<\theta <u_1-l_1 \), and let \( x_1, x_2 \in S_{1} \) satisfy \( l_1 < x_1 < x_2 < u_1-\theta \). Consider the expression
\[
\Delta = f_{X_w}(x_1 + \theta)\, f_{X_w}(x_2) - f_{X_w}(x_1)\, f_{X_w}(x_2 + \theta).
\]
Now, \(\mathbb{E}[w(X)]^2\Delta\) is expressed as
\[ \mathbb{E}[w(X)]^2\Delta= w'(x_1+\theta)\overline{F}_X(x_1+\theta)w'(x_2)\overline{F}_X(x_2)-w'(x_1)\overline{F}_X(x_1)w'(x_2+\theta)\overline{F}_X(x_2+\theta).\]
Given that \(X\) is IFR [DFR], it follows that \( \overline{F}_X(\cdot) \) is log-concave [log-convex] on \(S_X\). Also, \( w'(\cdot) \) is a log-concave [log-convex] function on \(S_X\). It follows from \cref{definition3} that
\[\overline{F}_X(x_1+\theta)\overline{F}_X(x_2)\geq [\leq] \overline{F}_X(x_1)\overline{F}_X(x_2+\theta),\]
and
\[w'(x_1+\theta)w'(x_2)\geq [\leq] w'(x_1)w'(x_2+\theta).\]
Using these inequalities, we have \(\mathbb{E}[w(X)]^2\Delta \geq  [\leq ]\:0\), which further implies that, \(\Delta\geq  [\leq ]\:0\). From \cref{definition3}, we get \(f_{X_w}(\cdot)\) is a log-concave [log-convex] function, that is, \(X_w\) is ILR [DLR].  
\end{proof} \par
The ILR [DLR] attribute of a variable \(X\) is crucial when its failure rate and MRL function do not admit closed-form expressions. In \(1980\), \citet{Glaser01091980} introduced the function \[\eta_X(x)=-\frac{f'_X(x)}{f_X(x)},\quad x \in S_X.\] The Glaser function \( \eta_X(\cdot) \) serves as an effective instrument for revealing the shape of the failure rate and MRL functions for many distributions. Examples include the gamma, beta, and positively truncated normal distributions. Although closed-form expressions for the failure rate and MRL function are not available for these distributions, they admit simple expressions for \( \eta_X(\cdot) \). \citet{Glaser01091980} proved that if \(\eta_X(\cdot)\) of \(X\) is increasing [decreasing], then the failure rate function \(r_X(\cdot)\) is also increasing [decreasing], and the MRL function \(m_X(\cdot)\) of \(X\) is decreasing [ increasing].
\begin{example} Consider \(X\) with SF \( \overline{F}_X(x) = (1 - x)^{\beta - 1}\), \(0 < x < 1\), where \(\beta > 1\).  
Let \(w(x) = x^\alpha\), \(0\leq x<1\),  where \(\alpha > 1\). Then \(r_X(\cdot)\) of \(X\) obtained as   
\[r_X(x) = \frac{\beta-1}{1-x}, \quad 0\leq x< 1.\]  
 As \(r_X(\cdot)\) is increasing, \(X\) is IFR. Now, we have
\[\frac{d^2}{dx^2}\bigg(\log w'(x)\bigg) = \frac{1-\alpha}{x^2} < 0,\]  
 that is, \( w'(\cdot) \) is a log-concave function. The PDF of the WTRV \(X_w\)  is (see \cref{table1}) defined as
\[f_{X_w}(x) = \frac{\Gamma(\alpha + \beta)}{\Gamma(\alpha) \Gamma(\beta)} x^{\alpha - 1} (1 - x)^{\beta - 1}, \quad 0 < x < 1.\]  
The Glaser function \(\eta_{X_w}(\cdot)\) of \(X_w\) is obtained as   
\[\eta_{X_w}(x) = \frac{\beta-1}{1-x} - \frac{\alpha-1}{x},\quad  0 < x < 1.\] Since \(\eta_{X_w}(\cdot)\) is an increasing function, \(X_w\) exhibits the ILR property. 
\end{example} \par
Setting \( w(x) = x \) in \cref{proposition1}, we have \( w'(x) = 1 \), which is trivially both log-concave and log-convex. This yields the following corollary.
\begin{corollary} Let \(X\) be a non-negative continuous random variable. Then, \(X\) is IFR [DFR] if and only if \(\widetilde{X}\) is ILR [DLR].
\begin{proof} The proof follows from the expression of the Glaser function of \(\widetilde{X}\) as \(\eta_{\widetilde{X}}(\cdot)=r_X(\cdot)\).
\end{proof}
\end{corollary}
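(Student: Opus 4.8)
The plan is to establish the pointwise identity $\eta_{\widetilde{X}}(\cdot) = r_X(\cdot)$ and then read off the equivalence directly from the monotonicity characterizations recorded in \cref{definition4}. It is worth noting at the outset that \cref{proposition1}, applied with $w(x)=x$ so that $w'(x)=1$ is simultaneously log-concave and log-convex, already supplies the two forward implications (IFR $\Rightarrow$ ILR and DFR $\Rightarrow$ DLR). The genuine content of the corollary is therefore to upgrade these one-way implications to biconditionals, and the identity $\eta_{\widetilde{X}} = r_X$ is precisely the tool that makes the converse direction automatic rather than requiring a separate argument.

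First I would recall from Equation~\eqref{2} that the equilibrium density is $f_{\widetilde{X}}(x) = \overline{F}_X(x)/\mu$ on the support of $\widetilde{X}$, where $\mu = \mathbb{E}[X] > 0$ is assumed finite. Since $X$ is continuous, $\overline{F}_X$ is differentiable with $\overline{F}_X'(x) = -f_X(x)$, whence $f_{\widetilde{X}}'(x) = -f_X(x)/\mu$. Forming the Glaser function then gives
\[
\eta_{\widetilde{X}}(x) = -\frac{f_{\widetilde{X}}'(x)}{f_{\widetilde{X}}(x)} = -\frac{-f_X(x)/\mu}{\overline{F}_X(x)/\mu} = \frac{f_X(x)}{\overline{F}_X(x)} = r_X(x),
\]
the factors of $\mu$ cancelling and the identity holding at every point of the common support.

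With this identity in hand, the conclusion follows from \cref{definition4}. By part~(ii) of that definition, $X$ is IFR [DFR] exactly when $r_X(\cdot)$ is increasing [decreasing]; by part~(i), $\widetilde{X}$ is ILR [DLR] exactly when $\eta_{\widetilde{X}}(\cdot)$ is increasing [decreasing]. Because $\eta_{\widetilde{X}}(\cdot)$ and $r_X(\cdot)$ are literally the same function, one is increasing [decreasing] if and only if the other is, and this yields IFR $\Leftrightarrow$ ILR and DFR $\Leftrightarrow$ DLR at once.

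I do not anticipate a genuine analytic obstacle: the proof reduces to a one-line computation followed by an appeal to two definitions. The only points demanding any care are the standing hypotheses that make the statement meaningful, namely the finiteness of $\mu = \mathbb{E}[X]$ needed for the equilibrium density to be well defined and the continuity of $X$ needed to differentiate $\overline{F}_X$ and thereby form $\eta_{\widetilde{X}}$. The single conceptual step worth flagging is the one noted above, that \cref{proposition1} by itself does not deliver the corollary; it is the explicit coincidence of the two monotone functions that promotes the implication to a full equivalence.
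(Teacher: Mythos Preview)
Your proposal is correct and follows exactly the approach of the paper: the paper's proof consists of the single line ``The proof follows from the expression of the Glaser function of \(\widetilde{X}\) as \(\eta_{\widetilde{X}}(\cdot)=r_X(\cdot)\),'' and you have simply unpacked this by computing that identity explicitly and invoking \cref{definition4}. Your additional remark that \cref{proposition1} alone yields only the forward implications is apt commentary but not part of the paper's argument.
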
 \par
We now consider \( X \) is IFR and establish conditions on \( w(\cdot) \) and \(r_X(\cdot)\) that ensure \( X_w \) preserves the IFR property.
\begin{theorem} \label{theorem1}  Let \(X\) is IFR. If the function \( w'(\cdot)/r_X(\cdot) \) is both increasing and log-concave on \( S_X \), then \( X_w \) is also IFR.
\end{theorem}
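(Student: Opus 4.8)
The plan is to establish the IFR property of \(X_w\) by showing that its failure rate \(r_{X_w}(\cdot)\) is increasing, and the first move is to recognize \(X_w\) as a \emph{weighted} version of \(X\) with weight \(w'/r_X\). Writing \(w'(x)=\phi(x)\,r_X(x)\) with \(\phi(x)=w'(x)/r_X(x)\) and using \(f_X=r_X\overline{F}_X\), the defining PDF \eqref{1} becomes \(f_{X_w}(x)=\phi(x)f_X(x)/\mathbb{E}[w(X)]\); moreover \(\mathbb{E}[\phi(X)]=\int_0^{u_X}w'(t)\overline{F}_X(t)\,dt=\mathbb{E}[w(X)]\) by \cref{lemma4}, so \(f_{X_w}\) is exactly the \(\phi\)-weighted density of \(X\). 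Dividing numerator and denominator of \(r_{X_w}(x)=\phi(x)f_X(x)\big/\int_x^{u_X}\phi(t)f_X(t)\,dt\) by \(\overline{F}_X(x)\) then gives the key identity
\[ r_{X_w}(x)=\frac{\phi(x)}{\psi(x)}\,r_X(x),\qquad \psi(x):=\mathbb{E}[\phi(X)\mid X>x]. \]
Since \(X\) is IFR, \(r_X(\cdot)\) is nonnegative and increasing, and a product of nonnegative increasing functions is increasing; it therefore suffices to prove that \(\phi/\psi\) is increasing, equivalently that \(\psi/\phi\) is decreasing.

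To analyze \(\psi/\phi\) I would rewrite it as an expectation over the residual life of \(X\). Letting \(R_x\) denote a random variable distributed as \([X-x\mid X>x]\), whose law has survival function \(\overline{F}_X(x+s)/\overline{F}_X(x)\), and setting \(g_x(s):=\phi(x+s)/\phi(x)\), one has
\[ \frac{\psi(x)}{\phi(x)}=\int_0^{u_X-x}\frac{\phi(x+s)}{\phi(x)}\,\frac{f_X(x+s)}{\overline{F}_X(x)}\,ds=\mathbb{E}\!\left[g_x(R_x)\right]. \]
The task is then to show that \(\mathbb{E}[g_x(R_x)]\) decreases as \(x\) increases, and the natural route is to separate the two effects of raising \(x\) from \(x_1\) to \(x_2\) (with \(x_1<x_2\)): the integrand \(g_x\) changes, and the mixing law \(R_x\) changes.

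This separation is exactly where the three hypotheses enter with distinct roles, and it is the main obstacle. First, log-concavity of \(\phi=w'/r_X\) is precisely the inequality \(\phi(x_1+s)\phi(x_2)\ge\phi(x_2+s)\phi(x_1)\) for \(s>0\), i.e.\ \(g_{x_1}(s)\ge g_{x_2}(s)\) pointwise; hence \(\mathbb{E}[g_{x_1}(R_{x_2})]\ge\mathbb{E}[g_{x_2}(R_{x_2})]\). Second, because \(X\) is IFR its residual life is stochastically decreasing, so \(R_{x_1}\geq_{st}R_{x_2}\) (see \citet{shaked2007stochastic}); as \(\phi\) is increasing, \(g_{x_1}(\cdot)\) is increasing, and the usual stochastic order yields \(\mathbb{E}[g_{x_1}(R_{x_1})]\ge\mathbb{E}[g_{x_1}(R_{x_2})]\). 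Chaining these gives \(\psi(x_1)/\phi(x_1)\ge\psi(x_2)/\phi(x_2)\), so \(\psi/\phi\) is decreasing and \(\phi/\psi\) is increasing; combined with the monotonicity of \(r_X\), the identity above shows \(r_{X_w}\) is increasing, i.e.\ \(X_w\) is IFR.

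I expect the genuinely delicate point to be this averaged comparison rather than any single estimate. Since \(X\) is assumed only IFR and not ILR, the density ratio \(f_X(x+s)/f_X(x)\) need not be monotone in \(x\), so no pointwise factorisation of the integrand can succeed; it is essential to isolate the log-concavity of the weight (which controls \(g_x\) pointwise) from the IFR property (which controls the residual-life law through the usual stochastic order), and the hypothesis that \(w'/r_X\) be \emph{increasing} is precisely what allows the stochastic ordering of residual lives to be exploited. The only remaining technicalities are checking integrability of \(\psi\) near \(u_X\) and the positivity of \(r_X\) needed to define \(\phi\), both of which are routine under the standing assumptions on \(w(\cdot)\) and \(X\).
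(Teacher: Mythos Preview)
Your argument is correct and takes a genuinely different route from the paper. The paper works directly with the survival function: it fixes $\theta>0$ and $x_1<x_2$, writes $\Delta_1=\overline{F}_{X_w}(x_1+\theta)\overline{F}_{X_w}(x_2)-\overline{F}_{X_w}(x_1)\overline{F}_{X_w}(x_2+\theta)$ as a difference of double integrals, introduces auxiliary variables $Z_1\sim f_X(\cdot+\theta)$ and $Z_2\sim f_X(\cdot)$ (so that IFR gives $Z_1\le_{fr}Z_2$), and then invokes \cref{lemma2} from \citet{Misra31012008} to conclude $\Delta_1\ge0$; the increasing and log-concave hypotheses on $w'/r_X$ are used to verify conditions (ii) and (iii) of that lemma. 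Your proof instead exploits the observation that $X_w$ is the $\phi$-weighted version of $X$ with $\phi=w'/r_X$, leading to the clean identity $r_{X_w}=(\phi/\psi)\,r_X$, and then handles the conditional expectation $\psi(x)=\mathbb{E}[\phi(X)\mid X>x]$ by a residual-life argument: log-concavity of $\phi$ controls the integrand pointwise, while IFR gives $R_{x_1}\ge_{st}R_{x_2}$ and the monotonicity of $\phi$ lets you transfer that ordering. The two proofs use the three hypotheses in exactly parallel roles, but yours is more self-contained (it needs only the elementary fact that $\ge_{st}$ preserves expectations of increasing functions, rather than the technical \cref{lemma2}) and makes the mechanism more transparent; the paper's approach, on the other hand, fits its broader template of reducing such questions to \cref{lemma1,lemma2,lemma3} and avoids any division by $r_X$ or $\phi$, so it is marginally less sensitive to positivity issues on the support.
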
 
\begin{proof} Since \(w(\cdot)\) increasing function on \( (0,\, u_X)\), it follows that \( u_X = u_1 \). For a fixed \( 0 < \theta < u_1 - l_1 \), and for \( x_1 \) and \( x_2 \) such that \( l_1 - \theta < x_1 < x_2 < u_1 = u_X \), define 
\[
\Delta_1 = \overline{F}_{X_w}(x_1+\theta)\overline{F}_{X_w}(x_2) - \overline{F}_{X_w}(x_1)\overline{F}_{X_w}(x_2+\theta).\]
Then, \(\mathbb{E}[w(X)]^2 \Delta_1 \) is expressed as \begin{align*} \mathbb{E}[w(X)]^2 \Delta_1 &=  \left[ \int_{x_1+\theta}^\infty w'(z_1) \overline{F}_X(z_1)dz_1 \right] \left[ \int_{x_2}^\infty w'(z_2) \overline{F}_X(z_2)dz_2 \right] \\
& \qquad- \left[ \int_{x_1}^\infty w'(z_2) \overline{F}_X(z_2)dz_2 \right] \left[ \int_{x_2+\theta}^\infty w'(z_1) \overline{F}_X(z_1)dz_1 \right]\\
&=  \left[ \int_{x_1}^{u_X-\theta} \frac{w'(z_1+\theta)}{r_X(z_1+\theta)} f_X(z_1+\theta)dz_1 \right] \left[ \int_{x_2}^{u_X}  \frac{w'(z_2)}{ r_X(z_2)}f_X(z_2)dz_2 \right] \\
& \qquad- \left[ \int_{x_1}^{u_X} \frac{w'(z_2)}{ r_X(z_2)}f_X(z_2)dz_2 \right] \left[ \int_{x_2}^{u_X-\theta} \frac{w'(z_1+\theta)}{r_X(z_1+\theta)} f_X(z_1+\theta)dz_1 \right].\\
 \end{align*} 
Consider the independent variables, \( Z_1 \) and \( Z_2 \), with PDFs \( f(z_1 + \theta) \) and \( f(z_2) \), respectively. Let \( K_i(\cdot) \) represent the CDFs of \( Z_i \), where
 \begin{align*} l_{Z_i} = \inf\{z \in \mathbb{R} : K_i(z) > 0\}, \:\:u_{Z_i} = \sup\{z \in \mathbb{R} : K_i(z) < 1\}, \:\: S_{Z_i} = (l_{Z_i}, u_{Z_i}), \:\: i = 1, 2. \end{align*}
Then the supports of \( Z_1 \) and \( Z_2 \) are given by \( S_{Z_1} = (l_X - \theta,\, u_X - \theta) \) and \( S_{Z_2} = (l_X,\, u_X) \), respectively. Given that \( X \) exhibits the IFR property, it follows that  
\[
r_{Z_1}(x) = \frac{f_X(x + \theta)}{\overline{F}_X(x + \theta)} = r_X(x + \theta) \geq r_X(x) = \frac{f_X(x)}{\overline{F}_X(x)} = r_{Z_2}(x),
\]  
which implies that \( Z_1 \le_{fr} Z_2 \). Now we verify the conditions (i)-(iii) of \cref{lemma2} for \(Z_1\) and \(Z_2\) to show that \(\Delta_1\geq 0\). Since \(l_X - \theta < x_1 < x_2 <  u_X - \theta <u_X  \), it follows that  
\[
S_{Z_1} \cap S_{Z_2} = (l_X - \theta, u_X - \theta)  \cap (l_X, u_X) = (l_X, u_X-\theta) \neq \emptyset.
\]
Now define \[\phi_2(x, y) = \frac{w'(x+\theta) w'(y )}{r_X(x+\theta)r_X(y)} I(x_1 \le x < u_X-\theta) I(x_2 \le y < u_X ), \] and
\[ \phi_1(x, y) = \frac{w'(x+\theta) w'(y )}{r_X(x+\theta)r_X(y)} I(x_2 \le x < u_X-\theta) I(x_1 \le y < u_X),\]
where \((x,\,y)\in S_{Z_1}\times S_{Z_2}\). Then, utilizing the independence of \(Z_1\) and \(Z_2\), \( \mathbb{E}[w(X)]^2\Delta_1\) is expressed as
\begin{align*} \mathbb{E}[w(X)]^2 \Delta_1 = & E\left[ \phi_2(Z_1,\,Z_2) \right]  - E\left[ \phi_1(Z_1,\,Z_2) \right]. \end{align*}
For $(x, y) \in S_{Z_1} \times S_{Z_2}$, let \(\Delta\phi_{21}(x, y) = \phi_2(x, y) - \phi_1(x, y)\), then
\begin{equation*} \Delta\phi_{21}(x, y) = \begin{cases} \dfrac{w'(x+\theta) w'(y )}{r_X(x+\theta)r_X(y)}, & \text{if } x_1 \le x < x_2 \text{ and } x_2 \le y < u_X, \\[10pt] -\dfrac{w'(x+\theta) w'(y )}{r_X(x+\theta)r_X(y)}, & \text{if } x_2 \le x < u_X-\theta \text{ and } x_1 \le y < x_2, \\ 0, & \text{otherwise}. \end{cases} \end{equation*} 
It is evident that \( \Delta\phi_{21}(x, y) \ge 0 \) for all \( (x, y) \in S_{Z_1} \times S_{Z_2} \) with \( x \le y \). Using the non-negativity and increasing property of \( w'(\cdot)/r_X(\cdot) \) on \( S_{Z_2} = S_X \), we have for each fixed \( x \in S_{Z_1} \cap S_{Z_2} = (l_X,\, u_X - \theta) \), \( \Delta\phi_{21}(x, y) \) is an increasing function in \( y \in [x, u_{Z_2}] = [x, u_X] \). Further, for all \( (x,\, y) \in (S_{Z_1} \cap S_{Z_2}) \times (S_{Z_1} \cap S_{Z_2}) \) with \( x \leq y \), using the log-concave propery of \( w'(\cdot)/r_X(\cdot) \) over \( S_{X} \) we get
\[\Delta\phi_{21}(x, y) +\Delta\phi_{21}(y, x)=\frac{w'(x+\theta) w'(y )}{r_X(x+\theta)r_X(y)}-\frac{w'(x )w'(y+\theta) }{r_X(x)r_X(y+\theta)}\geq 0.\]
 Thus, the functions \( \phi_1(\cdot, \cdot) \), \( \phi_2(\cdot, \cdot) \), and \(\Delta\phi_{21}(\cdot, \cdot)\) satisfy conditions (i)–(iii) of \cref{lemma2}, with the independent random variable \(Z_1\) and \(Z_2\) satisfying \( Z_1 \le_{\text{fr}} Z_2 \). Hence, by applying \cref{lemma2}, one can deduce that \(\mathbb{E}[\phi_2(Z_1, Z_2)] \ge \mathbb{E}[\phi_1(Z_1, Z_2)]\). Consequently, we have
\[
\Delta_1 = \overline{F}_{X_w}(x_1 + \theta)\,\overline{F}_{X_w}(x_2) - \overline{F}_{X_w}(x_1)\,\overline{F}_{X_w}(x_2 + \theta) \ge 0,
\]  
which establishes the log-concavity of \( \overline{F}_{X_w}(\cdot) \) on \( S_{X_w} \) by using \cref{definition3}. Therefore, \( X_w \) possesses the IFR property.  
\end{proof}
\begin{example} 
Let \(X\) follow the standard uniform distribution. Then failure rate \(r_X(\cdot) \) is defined by
\[
r_X(x) = \dfrac{1}{1 - x}, \quad 0 < x < 1.
\]
Let \(w(x) = -\log(1 - x^2)\), \(0<x<1\). Then, 
\[
\dfrac{w'(x)}{r_X(x)} = \frac{2x(1 - x)}{1 - x^2} = \dfrac{2x}{1 + x},\quad 0<x<1.
\]
 It is an increasing function. Moreover, the second derivative 
\[
\dfrac{d^2}{dx^2} \left( \log \left( \dfrac{w'(x)}{r_X(x)} \right) \right) = -\frac{1}{x^2} + \frac{1}{(1 + x)^2} < 0\quad \text{for\quad} 0 < x < 1.
\]
Therefore, \(w'(\cdot)/r_X(\cdot)\) is log-concave over \(S_X=(0,\,1)\). Hence, by applying \cref{theorem1}, we conclude that \(X_w\) exhibits the IFR property.
\end{example} \par
Now we consider \( X \) is DFR and establish conditions on \( w(\cdot) \) and \( r_X(\cdot) \) ensuring that \( X_w \) is also DFR.
\begin{theorem} \label{theorem2} Let \( X \) is DFR. If the function \( w'(\cdot)/r_X(\cdot) \) is increasing and log-convex on \( S_X \), then \( X_w \) is also DFR.
\end{theorem}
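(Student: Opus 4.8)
The plan is to mirror the proof of \cref{theorem1}, replacing log-concavity by log-convexity and reversing every inequality. Since $X$ is DFR, $\overline{F}_X(\cdot)$ is log-convex and $r_X(\cdot)$ is decreasing on $S_X$; moreover, by the aging facts recalled above (item (iii)), the DFR property forces $u_X=\infty$, so all supports are half-lines. I would fix $0<\theta<u_1-l_1$ and points $x_1<x_2$ in $S_1$, set
\[
\Delta_1=\overline{F}_{X_w}(x_1+\theta)\,\overline{F}_{X_w}(x_2)-\overline{F}_{X_w}(x_1)\,\overline{F}_{X_w}(x_2+\theta),
\]
and aim to show $\Delta_1\le 0$; by \cref{definition3} this is exactly the log-convexity of $\overline{F}_{X_w}(\cdot)$, i.e.\ the DFR property of $X_w$. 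Writing $g(\cdot)=w'(\cdot)/r_X(\cdot)$ and using $w'(z)\overline{F}_X(z)=g(z)f_X(z)$, the substitution $z\mapsto z+\theta$ employed in \cref{theorem1} expresses $\mathbb{E}[w(X)]^2\Delta_1$ as a difference of products of tail integrals of $g(z)f_X(z)$ and $g(z+\theta)f_X(z+\theta)$.

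As before I would introduce independent $Z_1,Z_2$ with densities $f_X(\cdot+\theta)$ and $f_X(\cdot)$. The decisive sign change occurs here: because $r_X(\cdot)$ is now \emph{decreasing}, $r_{Z_1}(x)=r_X(x+\theta)\le r_X(x)=r_{Z_2}(x)$, so the failure-rate comparison is reversed relative to \cref{theorem1}, giving $Z_2\le_{fr}Z_1$ on the common support. Accordingly I would define
\[
\phi_2(u,v)=g(u)\,g(v+\theta)\,I(u\ge x_1)\,I(v\ge x_2),\qquad \phi_1(u,v)=g(u)\,g(v+\theta)\,I(u\ge x_2)\,I(v\ge x_1),
\]
evaluated at $(Z_2,Z_1)$, so that $\mathbb{E}[w(X)]^2(-\Delta_1)=\mathbb{E}[\phi_2(Z_2,Z_1)]-\mathbb{E}[\phi_1(Z_2,Z_1)]$. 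The verification of conditions (i)–(iii) of \cref{lemma2} then proceeds as follows: condition (i) holds for structural reasons identical to \cref{theorem1}, since the region where $\Delta\phi_{21}$ is negative forces $u>v$; condition (ii) uses that $g(\cdot)=w'(\cdot)/r_X(\cdot)$ is increasing; and condition (iii) now requires $g(u)\,g(v+\theta)\ge g(u+\theta)\,g(v)$ for $u<v$, which is precisely the \emph{log-convexity} of $g(\cdot)$ via \cref{definition3}. Invoking \cref{lemma2} yields $\mathbb{E}[\phi_2]\ge\mathbb{E}[\phi_1]$, hence $-\Delta_1\ge 0$, so $\Delta_1\le 0$, establishing the claim.

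The main obstacle I anticipate is reconciling the reversed failure-rate order $Z_2\le_{fr}Z_1$ with the support-bound convention built into \cref{lemma2}: the left-shifted variable $Z_1$ (density $f_X(\cdot+\theta)$) carries the smaller lower bound $l_X-\theta$ yet is now the \emph{larger} variable in the failure-rate order, so the nominal requirement $l_{Z_2}\le l_{Z_1}$ fails, in contrast to the IFR case where these two effects aligned. The resolution, which I would make explicit, is that $u_X=\infty$ settles the upper-bound condition, while the indicator factors with $x_1,x_2>l_1\ge l_X$ confine all integration to the common support $(l_X,\infty)$, on which $Z_2\le_{fr}Z_1$ genuinely holds; the extraneous interval $(l_X-\theta,l_X)$ never contributes to the expectations. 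Everything else is a routine transcription of \cref{theorem1} with the inequalities reversed.
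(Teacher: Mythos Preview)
Your approach is essentially the paper's own. The paper likewise works with $\Delta_2=-\Delta_1=\overline{F}_{X_w}(x_1)\overline{F}_{X_w}(x_2+\theta)-\overline{F}_{X_w}(x_1+\theta)\overline{F}_{X_w}(x_2)$, introduces independent auxiliaries with densities $f_X(\cdot)$ and $f_X(\cdot+\theta)$, and invokes \cref{lemma2} using the increasing and log-convex properties of $g(\cdot)=w'(\cdot)/r_X(\cdot)$. The only cosmetic difference is the labeling: the paper calls the \emph{unshifted} variable $Z_1$ and the \emph{shifted} one $Z_2$ and asserts $Z_1\le_{fr}Z_2$, which lets it aim directly for $\Delta_2\ge0$ without your sign flip. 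Your $\phi_2,\phi_1$ at $(Z_2,Z_1)$ coincide with the paper's $\phi_2,\phi_1$ at its $(Z_1,Z_2)$.

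The obstacle you flag is real, and the paper in fact glosses over it rather than resolving it. With the paper's labeling one would still need $l_{Z_1}\le l_{Z_2}$, i.e.\ $l_X\le l_X-\theta$, which fails; more tellingly, the left-shifted variable $X-\theta$ is stochastically \emph{smaller} than $X$, so the global ordering $Z_1\le_{fr}Z_2$ (which would imply $Z_1\le_{st}Z_2$) cannot hold. What makes the argument go through—both in your version and the paper's—is precisely your observation that the indicators $I(\cdot\ge x_1),\,I(\cdot\ge x_2)$ with $x_1,x_2>l_1\ge l_X$ kill all contribution from $(l_X-\theta,l_X)$, so the expectations depend only on the common support $(l_X,\infty)$, where the failure-rate comparison $r_X(x+\theta)\le r_X(x)$ is valid. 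Your write-up is therefore at least as careful as the paper's; to make it fully airtight you should not invoke \cref{lemma2} as a black box but rather note explicitly that replacing the shifted variable by its truncation to $(l_X,\infty)$ leaves both expectations unchanged (the same factor $\overline{F}_X(l_X+\theta)$ appears on each side) and then apply \cref{lemma2} to that truncated pair, for which the support requirements are met.
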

\begin{proof} Since \(X\) exhibits DFR property, the upper bound of \(X\) is infinity, i.e., \(u_X = \infty\), which implies that \(S_X = (l_X,\, \infty)\). Furthermore, from the increasing property of  \(w(\cdot)\) over \((l_X,\, \infty)\), we get the support of \(X_w\) is \(S_{X_w} = (l_1,\, \infty)\). For a fixed \(\theta > 0\) and for \(x_1,\,x_2\) satisfying \(l_1 < x_1 < x_2 < \infty\), consider the following expression:
\[
\Delta_2 = \overline{F}_{X_w}(x_1)\, \overline{F}_{X_w}(x_2 + \theta) - \overline{F}_{X_w}(x_1 + \theta)\, \overline{F}_{X_w}(x_2).
\]
Then, \(l_X\leq l_1<x_1<x_2<\infty\). After some manipulations, we obtain
\begin{align*}
\mathbb{E}[w(X)]^2 \Delta_2&=\left[ \int_{x_1}^{\infty}  \frac{w'(z_1)}{ r_X(z_1)}f_X(z_1)dz_1 \right]\left[ \int_{x2}^{\infty} \frac{w'(z_2+\theta)}{r_X(z_2+\theta)} f_X(z_2+\theta)dz_2 \right] \\
& \qquad- \left[ \int_{x_1}^{\infty} \frac{w'(z_2+\theta)}{ r_X(z_2+\theta)}f_X(z_2+\theta)dz_2 \right] \left[ \int_{x_2}^{\infty} \frac{w'(z_1)}{r_X(z_1)} f_X(z_1)dz_1 \right]\\
\end{align*}
Let \(Z_1\) and \(Z_2\) be two independent random variables whose PDFs are given by
\(
f_{Z_1}(z_1) = f_X(z_1) \quad \text{for } z_1 > l_X,
\)
and
\(
f_{Z_2}(z_2) = f_X(z_2 + \theta) \quad \text{for } z_2 > l_X - \theta,
\)
respectively.  Denote by \( K_i(\cdot) \) the CDFs of \( Z_i \), \(i=1,2\) such that
 \begin{align*} l_{Z_i} = \inf\{z_i \in \mathbb{R} : K_i(z_i) > 0\}, \:\:u_{Z_i} = \sup\{z_i \in \mathbb{R} : K_i(z_i) < 1\}, \:\: S_{Z_i} = (l_{Z_i}, u_{Z_i}), \:\: i = 1, 2. \end{align*}
 Then \(S_{Z_1}\cap S_{Z_2}=S_X=(l_X,\,\infty)\). Since \(X\) has DFR property, it follows that \(Z_1\leq_{fr}Z_2\). Now using the independence of \(Z_1\) and \(Z_2\) we have
 \begin{align*}\mathbb{E}[w(X)]^2 \Delta_2&=\mathbb{E}\left[ \frac{w'(Z_1)}{ r_X(Z_1)}\frac{w'(Z_2+\theta)}{r_X(Z_2+\theta)} I(Z_1\geq x_1)I(Z_2\geq x_2)\right] \\
 &\qquad- \mathbb{E}\left[  \frac{w'(Z_2+\theta)}{ r_X(Z_2+\theta)}  \frac{w'(Z_1)}{r_X(Z_1)} I(Z_1\geq x_2)I(Z_2\geq x_1)\right].
 \end{align*}
 Define \[\phi_2(x, y) = \frac{w'(x) w'(y+\theta )}{r_X(x)r_X(y+\theta)} I(x\geq x_1) I(y\geq x_2 ), \] and
\[ \phi_1(x, y) = \frac{w'(x) w'(y+\theta )}{r_X(x)r_X(y+\theta)} I(x\geq x_2) I(y\geq x_1),\]
where \((x,\,y)\in S_{Z_1}\times S_{Z_2}\). Then, \(\mathbb{E}[w(X)]^2 \Delta_2\) is expressed as
\begin{align*} \mathbb{E}[w(X)]^2 \Delta_2 = E\left[ \phi_2(Z_1,\,Z_2) \right]  - E\left[ \phi_1(Z_1,\,Z_2) \right]. \end{align*}
For \((x,\,y)\in S_{Z_1}\times S_{Z_2}\), define \(\Delta\phi_{21}(x,\,y)= \phi_2(x,\,y)-\phi_1(x,\,y) \), then we have
\[\Delta\phi_{21}(x,\,y)=\begin{cases}
     \dfrac{w'(x)w'(y+\theta)}{r_X(x)r_X(y+\theta)}, & \text{\quad if\quad} x_2 \geq x\geq x_1 \text{\quad and \quad} y\geq x_2,\\[10pt]
-\dfrac{w'(x)w'(y+\theta)}{r_X(x)r_X(y+\theta)}, & \text{\quad if\quad} x\geq x_2 \text{\quad and \quad} x_2 \geq y\geq x_1,\\
0, & \text{\quad otherwise.}
\end{cases}\]
Observe that the intersection of the supports is \( S_{Z_1} \cap S_{Z_2} = (l_X,\, \infty) \neq \emptyset \). We now verify conditions (i)-(iii) of \cref{lemma2} for the independent variable \(Z_1\) and \(Z_2\).  From the definition of \( \Delta\phi_{21}(\cdot, \cdot)  \), we have \( \Delta\phi_{21}(x, y) \geq 0 \) for all \( (x, y) \in S_{Z_1} \times S_{Z_2} \) such that \( x \leq y \). Since \(w'(\cdot)/r_X(\cdot)\) is  increasing function on \(S_{X}\), for each fixed \(x\in S_{Z_1}\cap S_{Z_2}\), \(\Delta\phi_{21}(x,\,y)\) is increasing in \(y \in [x,\,\infty)\). Moreover, for \( x \leq y \), the log-convexity of \(w'(\cdot)/r_X(\cdot)\) on \( S_X \) implies that
\[\Delta\phi_{21}(x, y) +\Delta\phi_{21}(y, x)=\frac{w'(x) }{r_X(x)}\cdot\frac{ w'(y+\theta )}{r_X(y+\theta)}-\frac{w'(x+\theta )}{r_X(x+\theta))}\cdot\frac{w'(y)}{r_X(y)}\geq 0,\]
for \((x,\,y)\in (S_{Z_1}\cap S_{Z_2})\times (S_{Z_1}\cap S_{Z_2})\). Now, since \(Z_1\) and \(Z_2\) are independent and \( Z_1 \leq_{fr} Z_2 \), applying \cref{lemma2} to \(Z_1\) and \(Z_2\), it follows that
\begin{align*} \mathbb{E}[w(X)]^2 \Delta_2 = E\left[ \phi_2(Z_1,\,Z_2) \right]  - E\left[ \phi_1(Z_1,\,Z_2) \right]\geq 0. \end{align*}
Consequently,
\[
\Delta_2 = \overline{F}_{X_w}(x_1)\, \overline{F}_{X_w}(x_2 + \theta) - \overline{F}_{X_w}(x_1 + \theta)\, \overline{F}_{X_w}(x_2)\geq 0.
\] From \cref{definition3}, we get \(\overline{F}_{X_w}(\cdot)\) is log-convex on \(S_{X_w}\). Hence, \(X_w\) exhibits the DFR property. 
\end{proof}
\begin{example} Let \(X\) follows Pareto distribution with SF \[\overline{F}_X(x)=(1+x)^{-\alpha},\quad x\geq 0,\quad \alpha>0.\]
Let \(w(x)=e^{(x+1)^2}-e,\quad x\geq 0\). Then \(w'(\cdot)\) and \(r_X(\cdot)\) are defined by 
\[w'(x)=2(x+1)e^{(x+1)^2},\quad x\geq0, \text{\quad and\quad}r_X(x)=\frac{\alpha}{x+1}, \quad x\geq 0,\]
respectively. We observe that \[\frac{w'(x)}{r_X(x)}=\frac{2(x+1)^2e^{(x+1)^2}}{\alpha},\quad x\geq 0,\] is increasing. Furthermore, observed that
\[\dfrac{d^2}{dx^2}\log\left(\frac{w'(x)}{r_X(x)}\right)=2-\frac{2}{(x+1)^2}\geq 0\quad \text{for}\quad x\geq 0,\]
which ensures the log-convexity of \(w'(\cdot)/r_X(\cdot)\). Hence, by applying \cref{theorem2}, we conclude that \(X_w\) exhibits the DFR property.
\end{example} \par
We now consider \( X \) is DMRL and establish conditions on \( w(\cdot) \), \( r_X(\cdot) \), and the MRL function \( m_X(\cdot) \) to ensure that \( X_w \) has the DMRL property.
\begin{theorem} \label{theorem3} Let \( X \) is DMRL. If \( w'(\cdot)/r_X(\cdot) \) is increasing and log-concave on \( S_X \) and \( m_X(\cdot) \) is log-convex on \( S_X \), then \( X_w \) is IFR and hence DMRL.
\end{theorem}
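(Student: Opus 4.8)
The plan is to observe that the two assumptions placed on the mean residual life, namely that $X$ is DMRL and that $m_X(\cdot)$ is log-convex, already force $X$ itself to be IFR; once that is established the conclusion drops out of \cref{theorem1}. Thus I would split the argument into a single analytic step concerning $m_X(\cdot)$ and a short appeal to the earlier theorem, rather than rebuilding a fresh \cref{lemma2}-style comparison.

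First I would recall the standard identity linking the MRL and the failure rate. Writing $N(x)=\int_x^{u_X}\overline F_X(t)\,dt$ so that $m_X(x)=N(x)/\overline F_X(x)$, differentiation gives
\[
m_X'(x)=m_X(x)\,r_X(x)-1,\qquad\text{equivalently}\qquad r_X(x)=\frac{1+m_X'(x)}{m_X(x)}.
\]
In particular $1+m_X'(x)=m_X(x)\,r_X(x)>0$ on $S_X$. Taking logarithms and differentiating the last expression,
\[
\frac{r_X'(x)}{r_X(x)}=\frac{m_X''(x)}{1+m_X'(x)}-\frac{m_X'(x)}{m_X(x)}.
\]
The DMRL hypothesis gives $m_X'(x)\le 0$, so the subtracted term contributes a nonnegative quantity; and log-convexity of $m_X(\cdot)$ gives $(\log m_X)''\ge 0$, i.e. $m_X(x)\,m_X''(x)\ge (m_X'(x))^2\ge 0$, whence $m_X''(x)\ge 0$ and the first term is nonnegative because its denominator is positive. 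Therefore $r_X'(\cdot)\ge 0$, so $r_X(\cdot)$ is increasing and $X$ is IFR.

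With $X$ now known to be IFR and with $w'(\cdot)/r_X(\cdot)$ assumed increasing and log-concave on $S_X$, \cref{theorem1} applies directly and yields that $X_w$ is IFR. Finally, the aging implication $\text{IFR}\Rightarrow\text{DMRL}$ recorded among the facts preceding the lemmas, applied to $X_w$, gives that $X_w$ is DMRL, completing the proof.

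The main obstacle is the analytic step, and within it the sign bookkeeping: one must justify $1+m_X'(x)>0$ so that the first term is well defined with a positive denominator, and one must extract $m_X''(x)\ge 0$ from log-convexity rather than from mere monotonicity of $m_X(\cdot)$. Once these are secured the remainder is mechanical. I would also note that this reduction reveals the MRL hypotheses to be at least as strong as IFR, so the statement could alternatively be proved by replaying the $\phi_1,\phi_2$/\cref{lemma2} machinery used for \cref{theorem1,theorem2}; I prefer the reduction above precisely because it pins down where the DMRL and log-convexity assumptions enter and avoids repeating that computation.
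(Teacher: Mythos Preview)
Your argument is correct and follows exactly the same two-step reduction as the paper: first show that DMRL together with log-convexity of $m_X(\cdot)$ forces $X$ to be IFR, and then invoke \cref{theorem1}. The only difference is that the paper dispatches the first step by citing Theorem~2.5(a) of \citet{Misra31012008}, whereas you supply a direct calculus proof of that implication; your version is self-contained but otherwise identical in structure.
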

\begin{proof} By applying Theorem~2.5(a) of \citet{Misra31012008}, which assumes the DMRL property of \( X \) and the log-convex property of \( m_X(\cdot) \) on \( S_X \), we get that \( X \) has the IFR property. Now, using the IFR property of \( X \) and increasing and log-concave properties of \( w'(\cdot)/r_X(\cdot) \) on \( S_X \) to \cref{theorem1}, we get \( X_w \) is IFR and hence DMRL.  
\end{proof} \par
Now consider \( X \) is IMRL and establish conditions on \( w(\cdot) \), \( r_X(\cdot) \), and \( m_X(\cdot) \) to ensure that \( X_w \) is also IMRL.

\begin{theorem}\label{theorem4} Let \( X \) is IMRL. If \(w'(\cdot)/r_X(\cdot)\) is increasing and log-convex on \(S_X\), and the function \(m_X(\cdot)\) is log-concave on \(S_X\), then \(X_w\) is DFR and hence IMRL. 
\end{theorem}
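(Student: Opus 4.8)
The plan is to mirror the proof of \cref{theorem3}, exploiting the symmetry between the IFR/DMRL and DFR/IMRL settings. The argument decomposes into a two-step chain of implications: first establish that the hypotheses force \(X\) itself to be DFR, then transfer this property to \(X_w\) via the DFR-preservation result, and finally invoke the standard fact that DFR entails IMRL.

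For the opening step I would appeal to the dual of the result used in \cref{theorem3}. There, Theorem~2.5(a) of \citet{Misra31012008} supplied the implication that a DMRL variable with a log-convex MRL function is IFR; the corresponding dual statement (Theorem~2.5(b) of \citet{Misra31012008}) asserts that an IMRL variable whose MRL function \(m_X(\cdot)\) is log-concave on \(S_X\) must be DFR. Since by hypothesis \(X\) is IMRL and \(m_X(\cdot)\) is log-concave on \(S_X\), this directly yields that \(X\) is DFR.

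With \(X\) now known to be DFR and the remaining hypotheses guaranteeing that \(w'(\cdot)/r_X(\cdot)\) is increasing and log-convex on \(S_X\), the conditions of \cref{theorem2} are met exactly. Applying \cref{theorem2} gives that \(X_w\) is DFR, and fact~(iii) among the aging properties listed earlier then implies that \(X_w\) is IMRL (indeed forcing \(u_1 = \infty\)), completing the argument. Because every ingredient is already proved or cited, I anticipate no genuine obstacle; the entire content lies in correctly chaining the three implications. The only point demanding care is confirming that the cited dual result delivers the DFR conclusion under precisely the log-concavity hypothesis imposed here, so that the handoff into \cref{theorem2} is legitimate.
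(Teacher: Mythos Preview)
Your proposal is correct and matches the paper's own proof essentially line for line: the paper likewise invokes Theorem~2.5(b) of \citet{Misra31012008} to deduce that \(X\) is DFR from the IMRL and log-concave \(m_X(\cdot)\) hypotheses, then feeds this into \cref{theorem2} to conclude that \(X_w\) is DFR and hence IMRL.
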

\begin{proof}   By applying Theorem~2.5(b) of \citet{Misra31012008}, which assumes the IMRL property of \( X \) and the log-concave property of \( m_X(\cdot) \) on \( S_X \), we get that \( X \) has the DFR property. Now, using the DFR property of \( X \) and the increasing and log-convex properties of \( w'(\cdot)/r_X(\cdot) \) on \(S_X\) to \cref{theorem2}, we conclude that \( X_w \) is DFR and hence IMRL.  
\end{proof}
\section{Stochastic Ordering Properties of Random Variables and Their WTRVs}   
 This section focuses on several stochastic ordering properties involving \( X \) and its WTRV \( X_w \). At first, we establish conditions on \(X\), \(Y\), \(w_1(\cdot)\), and \(w_2(\cdot)\) under which the WTRVs \(X_{w_1}\) and \(Y_{w_2}\) are stochastically ordered by likelihood ratio ordering. We also establish the conditions under which the likelihood ratio ordering holds between \(X\) and \(X_w\) as well as between \(\widetilde{X}\) and \(X_w\). We then establish the usual stochastic ordering between \(X\) and \(Y\) when their WTRVs \(X_{w_1}\) and \(Y_{w_2}\) are stochastically ordered by reversed failure rate ordering. Furthermore, we examine and find conditions under which various stochastic orderings between two random variables \(X\) and \(Y\) are preserved between their respective WTRVs \(X_w\) and \(Y_w\). We consider the orderings, such as the usual stochastic order, failure rate order, and reversed failure rate order, for this purpose. This study may provide a tool to construct WTRVs for \(X\) and \(Y\), when some ordering property between \(X_w\) and \(Y_w\) is prespecified. \par
We first assume the failure rate order between \( X \) and \( Y \), and conditions on \( w_1(\cdot) \) and \( w_2(\cdot) \) that ensure the likelihood ratio order between \( X_{w_1} \) and \( Y_{w_2} \). We present the following theorem.
\begin{theorem} \label{theorem5}\begin{enumerate}[(i)]
    \item  If \:\(l_1 \leq l_2\),  \(u_1 \leq u_2\), \(X \leq_{fr} Y\), \(\frac{w_2'(\cdot)}{w_1'(\cdot )}\) is increasing in on  \(S_1\cap S_2\), with \(w_1'(x) \neq 0\) for all \(x \in S_1\), then \(X_{w_1} \leq_{lr} Y_{w_2}\).\\[1pt]
\item[(ii)]  If \:\(S_X=S_1\), \(S_Y= S_2\), \( X_{w_1} \leq_{lr} Y_{w_2} \), and \(w_1'(\cdot)/w_2'(\cdot)\) is increasing function on \(S_1\cap S_2\), with \( w_2'(x) \neq 0 \) for all \( x \in S_2 \), then \( X \leq_{fr} Y \). 
\end{enumerate} 
 
\end{theorem}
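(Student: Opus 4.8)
The plan is to prove both implications directly from the ratio characterizations in \cref{definition2}, using the single elementary fact that the product of two positive increasing functions is again increasing. The common computation is the density ratio of the two WTRVs: since $f_{X_{w_1}}(x)=w_1'(x)\overline{F}_X(x)/\mathbb{E}[w_1(X)]$ and $f_{Y_{w_2}}(x)=w_2'(x)\overline{F}_Y(x)/\mathbb{E}[w_2(Y)]$, for $x\in S_1\cap S_2$ we have
\[
\frac{f_{Y_{w_2}}(x)}{f_{X_{w_1}}(x)}=\frac{\mathbb{E}[w_1(X)]}{\mathbb{E}[w_2(Y)]}\cdot\frac{w_2'(x)}{w_1'(x)}\cdot\frac{\overline{F}_Y(x)}{\overline{F}_X(x)},
\]
where the leading constant is a fixed positive number that does not affect monotonicity. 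Since $w_1(\cdot)$ and $w_2(\cdot)$ are increasing weight functions, $w_1'>0$ and $w_2'>0$ on the relevant sets, so every quotient above is well defined and strictly positive.

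For part (i), by \cref{definition2}(i) it suffices, given $l_1\le l_2$ and $u_1\le u_2$, to show that $f_{Y_{w_2}}/f_{X_{w_1}}$ is increasing on $S_1\cap S_2$. In the displayed factorization the factor $w_2'(\cdot)/w_1'(\cdot)$ is increasing on $S_1\cap S_2$ by hypothesis, while $X\le_{fr}Y$ gives, via \cref{definition2}(ii), that $\overline{F}_Y(\cdot)/\overline{F}_X(\cdot)$ is increasing on $(-\infty,\max(u_X,u_Y))\supseteq S_1\cap S_2$. Both factors are positive and increasing, so their product is increasing, and hence so is $f_{Y_{w_2}}/f_{X_{w_1}}$. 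This yields $X_{w_1}\le_{lr}Y_{w_2}$.

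For part (ii), I would run the same factorization in reverse. The hypothesis $X_{w_1}\le_{lr}Y_{w_2}$ gives, by \cref{definition2}(i), that $(w_2'/w_1')(\overline{F}_Y/\overline{F}_X)$ is increasing on $S_1\cap S_2$, together with $l_1\le l_2$ and $u_1\le u_2$. Multiplying that increasing positive function by $w_1'(\cdot)/w_2'(\cdot)$, which is increasing and positive on $S_1\cap S_2$ by hypothesis, and invoking the same product fact, I obtain that $\overline{F}_Y(\cdot)/\overline{F}_X(\cdot)$ is increasing on $S_1\cap S_2=S_X\cap S_Y$. Combined with $l_X\le l_Y$ and $u_X\le u_Y$ (which follow from $S_X=S_1$ and $S_Y=S_2$ together with $l_1\le l_2$, $u_1\le u_2$), \cref{definition2}(ii) then delivers $X\le_{fr}Y$.

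The routine steps are the two factorizations; the only delicate points are bookkeeping ones. I must keep the weight functions strictly increasing so that $w_1',w_2'$ stay nonzero and all quotients are legitimate, and I must confirm positivity of $\overline{F}_X,\overline{F}_Y$ on $S_1\cap S_2$, which holds because this set lies below $\min(u_1,u_2)$. The main obstacle is matching the domain of monotonicity appearing in the fr-characterization, namely $(-\infty,\max(u_X,u_Y))$, with the set $S_1\cap S_2$ on which the lr-characterization operates; this requires identifying $u_1$ with $u_X$ and $u_2$ with $u_Y$ (the WTRV inherits its upper support endpoint from the underlying variable) and, in part (ii), extending monotonicity of $\overline{F}_Y/\overline{F}_X$ from $S_X\cap S_Y$ to the full half-line. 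The latter causes no trouble since $\overline{F}_X\equiv 1$ below the supports and $\overline{F}_Y\to 0$ above, which is precisely why the theorem imposes the support hypotheses $l_1\le l_2$, $u_1\le u_2$ and $S_X=S_1$, $S_Y=S_2$.
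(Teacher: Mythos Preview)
Your proposal is correct and is essentially the same argument the paper gives: both proofs compute the single density-ratio factorization
\[
\frac{f_{Y_{w_2}}(x)}{f_{X_{w_1}}(x)}=\frac{\mathbb{E}[w_1(X)]}{\mathbb{E}[w_2(Y)]}\cdot\frac{w_2'(x)}{w_1'(x)}\cdot\frac{\overline{F}_Y(x)}{\overline{F}_X(x)}
\]
and read off (i) and (ii) by multiplying/dividing by the appropriate increasing positive factor. If anything, your support-endpoint bookkeeping is more explicit than the paper's, which simply notes $S_1\cap S_2\subset S_X\cap S_Y$ for (i) and says (ii) ``follows directly'' from the rearranged identity.
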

\begin{proof} (i) Given that \(l_1 \leq l_2\) and \(u_1 \leq u_2\). For \(x\in S_1\cap S_2\), consider the following relation obtained by using the PDFs of \(X_{w_1}\) and \(Y_{w_2}\): 
\begin{align}
\frac{f_{Y_{w_2}}(x)}{f_{X_{w_1}}(x)} &= \frac{w_2'(x) \overline{F}_Y(x)}{w_1'(x) \overline{F}_X(x)} \cdot \frac{\mathbb{E}[w_1(X)]}{\mathbb{E}[w_2(Y)]}. \label{7}
\end{align}  
Here, \( X \leq_{fr} Y \) implies that \( \overline{F}_Y(\cdot)/\overline{F}_X(\cdot) \) is increasing on \( S_X \cap S_Y \), and hence on \( S_1 \cap S_2 \), since \( S_1 \cap S_2 \subset S_X \cap S_Y \). Now, it is given that \( w_2'(\cdot)/w_1'(\cdot)\) is increasing on \(S_1\cap S_2\) and \(w_1'(x) \neq 0\) for all  \(x\in S_1\). Hence, \(f_{Y_{w_2}}(\cdot)/f_{X_{w_1}}(\cdot)\) is increasing on \(S_1\cap S_2\), that is, \(X_{w_1} \leq_{lr}  Y_{w_2}\).\\[5pt]
(ii) The proof is similar to part (i) and follows directly from the relation below, obtained from \eqref{7}.
\begin{align}  
\frac{\overline{F}_Y(x)}{\overline{F}_X(x)} &= \frac{f_{Y_{w_2}}(x)}{f_{X_{w_1}}(x)} \cdot \frac{w_1'(x)}{w_2'(x)} \cdot \frac{\mathbb{E}[w_2(Y)]}{\mathbb{E}[w_1(X)]}. \label{8}  
\end{align}
\end{proof}
\begin{example} (i)\: Let \( X \) and \( Y \) follow exponential distributions with rate parameters \( \lambda_1 \) and \( \lambda_2 \), respectively, where \( 0 < \lambda_2 < \lambda_1 \). Consider \( w_1(x) = x^{k_1} \) and \( w_2(x) = x^{k_2} \), \(x\geq 0\), where \( 0 < k_1 < k_2 \).  We first observe that  
\[
\frac{\overline{F}_Y(x)}{\overline{F}_X(x)} = e^{(\lambda_1 - \lambda_2)x},\: x>0 \quad \text{and} \quad \frac{w_2'(x)}{w_1'(x)} = x^{k_2 - k_1}, \quad x > 0,
\]  
are increasing functions of \( x \). Consequently, \( X \leq_{fr} Y \) and \( w_2'(\cdot)/w_1'(\cdot) \) is increasing. Observe that, \(l_1=l_2=0\) and \(u_1=u_2=\infty\). Now, by applying \cref{theorem5}, it follows that \( X_{w_1} \leq_{lr} Y_{w_2} \).
\end{example}
In \cref{theorem5}, if we set \( w_1(x) = w_2(x) = x \), then the WTRVs \( X_{w_1} \) and \( Y_{w_2} \) becomes \( \widetilde{X} \) and \( \widetilde{Y} \) of \(X\) and \(Y\), respectively. Then we have the following corollary.
\begin{corollary} 
Suppose \( X \) and \( Y \) are non-negative random variables. \( X \leq_{fr} Y \) if and only if  \(  \widetilde{X} \leq_{lr} \widetilde{Y} \). 
\end{corollary}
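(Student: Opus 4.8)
The plan is to obtain the corollary as the special case $w_1(x) = w_2(x) = x$ of \cref{theorem5}. By Equation~\eqref{2}, this choice of weight function turns each WTRV into the corresponding equilibrium random variable, so that $X_{w_1} = \widetilde{X}$ and $Y_{w_2} = \widetilde{Y}$; here I tacitly assume $\mathbb{E}[X]$ and $\mathbb{E}[Y]$ are finite and positive, as is needed for $\widetilde{X}$ and $\widetilde{Y}$ to be defined. Since $w_1'(x) = w_2'(x) = 1$, both derivative ratios appearing in \cref{theorem5}, namely $w_2'/w_1'$ and $w_1'/w_2'$, equal $1$ identically, hence are (weakly) increasing, and neither derivative vanishes. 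Thus the analytic hypotheses on the weight functions hold automatically, and only the support bookkeeping must be verified.

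For the forward implication I would invoke \cref{theorem5}(i). The equilibrium PDF $f_{\widetilde{X}}(\cdot) = \overline{F}_X(\cdot)/\mathbb{E}[X]$ is positive precisely where $\overline{F}_X(\cdot)$ is, so for a non-negative $X$ with lower bound $0$ one has $l_1 = 0$ and $u_1 = u_X$, and similarly $l_2 = 0$, $u_2 = u_Y$. Assuming $X \leq_{fr} Y$, \cref{definition2}(ii) gives $u_X \leq u_Y$, whence $l_1 = l_2 = 0$ and $u_1 \leq u_2$. All the hypotheses of \cref{theorem5}(i) are now met, and it yields $\widetilde{X} \leq_{lr} \widetilde{Y}$.

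For the converse I would apply \cref{theorem5}(ii). Since $X$ has lower bound $0$, its support is $S_X = (0, u_X)$, which coincides with the equilibrium support $S_1 = (0, u_X)$; likewise $S_2 = S_Y$, so the conditions $S_X = S_1$ and $S_Y = S_2$ hold. Given $\widetilde{X} \leq_{lr} \widetilde{Y}$ together with the facts that $w_1'/w_2' = 1$ is increasing and $w_2'(x) = 1 \neq 0$, \cref{theorem5}(ii) delivers $X \leq_{fr} Y$, completing the equivalence.

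Since every hypothesis collapses to a triviality upon substituting $w_1 = w_2 = x$, there is no genuine obstacle here. The only points meriting a moment's care are the two support identifications --- that the equilibrium variable of a non-negative variable with lower bound $0$ has support $(0, u_X)$ coinciding with that of its parent --- and the convention, fixed in the preliminaries, that a constant ratio qualifies as increasing in the non-strict monotone sense.
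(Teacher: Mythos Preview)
Your proposal is correct and follows exactly the paper's approach: the corollary is obtained by specializing \cref{theorem5} to $w_1(x)=w_2(x)=x$, using part~(i) for the forward implication and part~(ii) for the converse. Your support verifications and the observation that the derivative ratios are identically~$1$ are precisely the trivial checks the paper implicitly relies on.
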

\begin{remark}  
The above corollary was proved by \citet{gupta2007role}.
\end{remark} \par
We now consider \(X\) is IFR [DFR] and assume conditions on \( w(\cdot) \) and \( r_X(\cdot) \) to establish a likelihood ratio order between \( X \) and \( X_w \). We state the following proposition.
\begin{proposition}\label{proposition2} Let \(X\) be defined on \(S_X=S_1=(l_1,\,u_1)\). If 
\begin{enumerate}
\item[(i)] \( X \) is IFR [DFR], and 
\item[(ii)] \( w(\cdot) \) is a strictly increasing and concave [convex] function on \(S_X\), \end{enumerate}
then, \( X_w \leq_{lr} X \) [\( X \leq_{lr} X_w \)].
\end{proposition}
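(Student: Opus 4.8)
The plan is to reduce the claim to a monotonicity statement about an explicit ratio, using the likelihood ratio criterion of \cref{definition2}(i). Since $S_X = S_1$ is assumed, the common support is just $S_X$ and the two variables share the same endpoints, so $X_w \leq_{lr} X$ is equivalent to the ratio $f_X(\cdot)/f_{X_w}(\cdot)$ being increasing on $S_X$, while $X \leq_{lr} X_w$ is equivalent to $f_{X_w}(\cdot)/f_X(\cdot)$ being increasing. I would therefore set up this ratio first and reduce everything to a statement about failure rates and $w'$.

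Substituting the defining PDF from Equation~\eqref{1} and cancelling the positive constant $\mathbb{E}[w(X)]$, I would obtain
\[
\frac{f_X(x)}{f_{X_w}(x)} = \mathbb{E}[w(X)]\,\frac{r_X(x)}{w'(x)}, \qquad x \in S_X,
\]
where $r_X(\cdot) = f_X(\cdot)/\overline{F}_X(\cdot)$ is the failure rate. On the open support $f_X > 0$ and $\overline{F}_X > 0$, so $r_X > 0$, and strict monotonicity of $w$ gives $w' > 0$. Thus the whole problem collapses to deciding when $r_X(\cdot)/w'(\cdot)$ (respectively its reciprocal) is increasing.

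For the IFR/concave case, I would argue that $r_X(\cdot)$ is nonnegative and increasing (this is precisely the IFR property, \cref{definition4}(ii)), while concavity of $w$ makes $w'(\cdot)$ decreasing, hence $1/w'(\cdot)$ positive and increasing. A product of two nonnegative increasing functions is increasing, so $r_X/w'$ is increasing and $X_w \leq_{lr} X$ follows. The DFR/convex case is dual: here I would instead work with
\[
\frac{f_{X_w}(x)}{f_X(x)} = \frac{1}{\mathbb{E}[w(X)]}\,\frac{w'(x)}{r_X(x)},
\]
observe that the DFR property makes $1/r_X(\cdot)$ positive and increasing while convexity of $w$ makes $w'(\cdot)$ positive and increasing, and conclude their product is increasing, giving $X \leq_{lr} X_w$.

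There is no serious obstacle here: the argument is a one-line algebraic identity followed by the elementary fact that a product of positive increasing functions is increasing. The only points needing care are bookkeeping ones: keeping the direction of the likelihood ratio order consistent with the convention in \cref{definition2}(i), so that the ratio of the \emph{larger} density over the smaller is the one shown to increase, and verifying the strict positivity of $r_X$ and $w'$ on the open support so that reciprocals and products behave as claimed.
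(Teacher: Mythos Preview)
Your argument is correct and is essentially the same as the paper's: both compute the ratio $f_{X_w}(x)/f_X(x)=w'(x)/\bigl(r_X(x)\,\mathbb{E}[w(X)]\bigr)$ (or its reciprocal) and read off the monotonicity from the IFR/DFR property of $r_X$ together with the concavity/convexity of $w$. The only cosmetic difference is that the paper works directly with $f_{X_w}/f_X$ in both cases, showing it is decreasing [increasing], whereas you flip to $f_X/f_{X_w}$ for the IFR case.
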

\begin{proof} Since \(S_X = S_1\), it follows that \(l_X = l_1\) and \(u_X = u_1\). Now, for \(x \in S_X \cap S_1\), we consider
\[ \frac{f_{X_w}(x)}{f_X(x)}=\frac{\overline{F}_X(x)\cdot w^{\prime}(x)}{f_X(x)\cdot \mathbb{E}[w(X)]}=\frac{1}{r_X(x)} \times \frac{w^{\prime}(x)}{\mathbb{E}[w(X)]}.\] 
The IFR [DFR] property of \(X\) implies that \(1/r_X(\cdot)\) is a decreasing [increasing] on \(S_X\). Using the strictly increasing and concave [convex] properties of \(w(\cdot)\), we get \(w'(\cdot)\) is decreasing [increasing] on \(S_X\). Therefore, the function \( f_{X_w}(\cdot)/f_X(\cdot)\) is decreasing [increasing] on \(S_X\cap S_1\), implying that \(X_w \leq_{lr} X\) [\(X\leq_{lr}X_w\)].  
\end{proof}
\begin{example}  
Let \( X \) has CDF \( F(x) = 1 - e^{-x^2} \), \(x>0\). Consider \( w(x) = x^k \), \(x>0\), where \( 0 < k < 1 \). Then \(r_X(\cdot)\) is defined as \[ r_X(x) = 2x,\quad x> 0. \]  
 Therefore, \( X \) is IFR. Now for \( 0 < k < 1 \), we obtain \[ w''(x) = k(k-1)x^{k-2} < 0 \quad \text{for}\quad x > 0. \]
So, \( w(\cdot) \) is a strictly increasing and concave. The PDFs of \( X \) and its WTRV \( X_w \) are 
\[
f_X(x) = 2x e^{-x^2} \quad \text{and} \quad  
f_{X_w}(x) = \frac{kx^{k-1} e^{-x^2}}{\Gamma(\frac{k}{2} + 1)}, \quad x \geq 0,
\]  
respectively. Now, by applying \cref{proposition2}, we get \(X_w \leq_{lr} X\).
\end{example}
In \cref{proposition2}, If we take \( w(x) = x \), the WTRV \( X_w \) becomes \( \widetilde{X}\). We present the following corollary.
\begin{corollary}  \(X\) is IFR [DFR] if and only if \(\widetilde{X}\leq_{lr} X\) \([ X \leq_{lr}\widetilde{X}] \)
\end{corollary}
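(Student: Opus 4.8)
The plan is to specialize \cref{proposition2} to the linear weight \(w(x)=x\), for which the WTRV \(X_w\) coincides with the equilibrium variable \(\widetilde{X}\), whose density is \(f_{\widetilde{X}}(x)=\overline{F}_X(x)/\mu\) by Equation~\eqref{2}. Since \(w(x)=x\) is strictly increasing and, being affine, simultaneously concave and convex on \(S_X\), \cref{proposition2} immediately delivers the forward implications: if \(X\) is IFR then \(\widetilde{X}\leq_{lr}X\), and if \(X\) is DFR then \(X\leq_{lr}\widetilde{X}\). Thus the only content requiring separate treatment is the converse of each equivalence.

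For the converse I would argue directly from the likelihood ratio. Under the framework's standing convention that \(X\) has lower bound \(0\), both \(X\) and \(\widetilde{X}\) share the support \((0,u_X)\), so the support conditions \(l_X\le l_{\widetilde{X}}\), \(u_X\le u_{\widetilde{X}}\) (and their reverse) in \cref{definition2}(i) are automatically satisfied, and the ordering reduces to a pure monotonicity statement. Using \(f_{\widetilde{X}}(x)=\overline{F}_X(x)/\mu\), one computes
\[
\frac{f_X(x)}{f_{\widetilde{X}}(x)}=\mu\,\frac{f_X(x)}{\overline{F}_X(x)}=\mu\,r_X(x),\qquad x\in(0,u_X).
\]
Because \(\mu>0\) is a positive constant, this ratio is increasing exactly when \(r_X(\cdot)\) is increasing. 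By \cref{definition2}(i), \(\widetilde{X}\leq_{lr}X\) holds iff \(f_X(\cdot)/f_{\widetilde{X}}(\cdot)\) is increasing, which is therefore equivalent to \(X\) being IFR in the sense of \cref{definition4}(ii). This settles both directions of the IFR equivalence at once. The DFR case is symmetric: reading the reciprocal \(f_{\widetilde{X}}(x)/f_X(x)=1/(\mu\,r_X(x))\), this is increasing iff \(r_X(\cdot)\) is decreasing, i.e.\ iff \(X\) is DFR.

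I do not anticipate a substantive obstacle, since the whole corollary collapses onto the single identity relating the likelihood ratio to the failure rate. The only point demanding mild care is the verification of the support conditions in the definition of \(\leq_{lr}\), which is dispatched by the convention \(l_X=0\) forcing \(S_{\widetilde{X}}=S_X\); once this is noted, the equivalence follows immediately and uniformly for both the IFR and DFR (bracketed) cases.
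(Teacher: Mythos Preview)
Your proposal is correct and follows essentially the same route as the paper: the paper's proof simply observes the identity \(f_X(\cdot)/f_{\widetilde{X}}(\cdot)=\mathbb{E}[X]\cdot r_X(\cdot)\), from which the equivalence in both the IFR and DFR cases is immediate. Your preliminary appeal to \cref{proposition2} for the forward implication is harmless but redundant, since---as you yourself note---the likelihood-ratio identity already delivers both directions at once.
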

\begin{remark} The proof of the above corollary follows from \( f_X(\cdot)/f_{\widetilde{X}}(\cdot) = \mathbb{E}[X] \cdot r_X(\cdot) \); (see \citet{bon2005ageing}).
\end{remark} \par
We now consider conditions on \( X \) and \( w(\cdot) \) to establish a likelihood ratio order between \(\widetilde{X}\) and \( X_w \). We state the following proposition.
\begin{proposition} \label{proposition3} Let \( \widetilde{X} \) be defined on \( S_{\widetilde{X}}=S_1 = (l_1,\,u_1)\). Then,  
\[
\widetilde{X} \leq_{lr} X_w \quad [X_w \leq_{lr} \widetilde{X}] \quad \text{ if and only if\: } w(\cdot) \text{ is a convex [concave] function on } S_1.
\]  
\end{proposition}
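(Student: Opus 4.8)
The plan is to reduce the likelihood ratio order directly to the monotonicity of a single ratio through \cref{definition2}(i), and then to recognize that this ratio is, up to a positive multiplicative constant, nothing but $w'(\cdot)$. First I would note that by hypothesis $\widetilde{X}$ and $X_w$ share the common support $S_1=(l_1,u_1)$, so the boundary requirements $l_{\widetilde{X}}\leq l_{X_w}$ and $u_{\widetilde{X}}\leq u_{X_w}$ in the equivalent formulation of \cref{definition2}(i) hold with equality and are automatically satisfied. Consequently, $\widetilde{X}\leq_{lr}X_w$ $[X_w\leq_{lr}\widetilde{X}]$ is equivalent to the single statement that $f_{X_w}(\cdot)/f_{\widetilde{X}}(\cdot)$ is increasing [decreasing] on $S_1$.

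Next I would compute this ratio explicitly. Using $f_{\widetilde{X}}(x)=\overline{F}_X(x)/\mu$ from \eqref{2}, with $\mu=\mathbb{E}[X]$, and $f_{X_w}(x)=w'(x)\overline{F}_X(x)/\mathbb{E}[w(X)]$ from \eqref{1}, the common factor $\overline{F}_X(x)$ cancels, leaving
\[
\frac{f_{X_w}(x)}{f_{\widetilde{X}}(x)}=\frac{\mu}{\mathbb{E}[w(X)]}\,w'(x),\qquad x\in S_1.
\]
Since $w(\cdot)$ is increasing with $w(0)=0$, the expectation $\mathbb{E}[w(X)]$ is positive (as established following \eqref{5}), and $\mu>0$; hence the prefactor $\mu/\mathbb{E}[w(X)]$ is a strictly positive constant. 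Multiplication by a positive constant preserves the direction of monotonicity, so the ratio is increasing [decreasing] on $S_1$ if and only if $w'(\cdot)$ is increasing [decreasing] on $S_1$.

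Finally, because $w(\cdot)$ is differentiable on $S_1$, the monotonicity of its derivative $w'(\cdot)$ is equivalent to the convexity [concavity] of $w(\cdot)$. Chaining these equivalences---$\widetilde{X}\leq_{lr}X_w$ iff the ratio is increasing iff $w'$ is increasing iff $w$ is convex, together with the bracketed analogue---yields both the direct and the converse implications at once, which is exactly the claimed biconditional. I do not anticipate a genuine obstacle, as the argument is a short cancellation followed by a standard calculus fact; the only points needing mild care are confirming that the supports coincide so that the boundary conditions of \cref{definition2}(i) become vacuous, and verifying the positivity of $\mu/\mathbb{E}[w(X)]$ so that it does not reverse the sense of monotonicity.
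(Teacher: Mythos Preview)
Your proposal is correct and follows essentially the same approach as the paper: both compute the ratio $f_{X_w}(x)/f_{\widetilde{X}}(x)$, cancel $\overline{F}_X(x)$ to obtain a positive constant times $w'(x)$, and then identify monotonicity of $w'(\cdot)$ with convexity [concavity] of $w(\cdot)$. Your write-up is slightly more explicit about the positivity of the constant and the handling of the support endpoints, but the underlying argument is the same.
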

\begin{proof}  From \(S_{\widetilde{X}}=S_1\), we get \(l_{\widetilde{X}}=l_1 \) and \( u_{\widetilde{X}}=u_1\). The PDF \(f_{\widetilde{X}}(\cdot)\) is defined by Equation~\eqref{2}. For \(x\in S_{\widetilde{X}}\cap S_1\), consider 
\[
\frac{f_{X_w}(x)}{f_{\widetilde{X}}(x)} = \frac{w'(x) \overline{F}_X(x)}{\mathbb{E}[w(X)]} \times \frac{\mathbb{E}[X]}{\overline{F}_X(x)} = \frac{w'(x) \mathbb{E}[X]}{\mathbb{E}[w(X)]}.
\]  
 Now, \(f_{X_w}(\cdot)/f_{\widetilde{X}}(\cdot)\) is an increasing [decreasing] on \(S_{\widetilde{X}}\cap S_1\) if and only if \( w'(\cdot)\) is increasing [decreasing] on \(S_1\). Consequently, \(\widetilde{X} \leq_{lr} X_w \quad [X_w \leq_{lr} \widetilde{X}]\) if and only if \(w(\cdot)\) is convex [concave] on \(S_1\).    
\end{proof} \par
The theorem below provides the conditions on \(w_1(\cdot)\) and \(w_2(\cdot)\) so that the reversed failure rate ordering between \(X_{w_1}\) and \(Y_{w_2}\) is transverse to the usual stochastic ordering between \(X\) and \(Y\).
\begin{theorem}  \label{theorem6} Let \( X \) and \(Y\) be defined on \(S_X=S_1=(l_1,\,u_1)\) and \(S_Y=S_2=(l_2,\,u_2)\), respectively. If \(X_{w_1} \leq_{rfr} Y_{w_2}\), \( \frac{w_1'(\cdot)}{w_2'(\cdot)}\) is an increasing function, and \(w_2'(x),\, w_1'(0) \neq 0 \) on  \( S_X\cap S_Y\), then  \(X \leq_{st} Y \).  
\end{theorem}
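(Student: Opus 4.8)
The plan is to turn the hypothesis $X_{w_1}\le_{rfr}Y_{w_2}$ into a pointwise differential inequality and then bootstrap it into the desired comparison $\overline F_X\le\overline F_Y$. Since the framework makes $X$ and $Y$ non-negative with lower endpoint $0$, the assumption $S_X=S_1$, $S_Y=S_2$ forces $l_1=l_2=0$, $u_1\le u_2$ (the latter from the rfr order), and $w_1'>0$, $w_2'>0$ on the respective supports, so every ratio below is well defined. By \cref{definition2}(iii), $X_{w_1}\le_{rfr}Y_{w_2}$ is equivalent to $F_{Y_{w_2}}(\cdot)/F_{X_{w_1}}(\cdot)$ being increasing. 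Writing $F_{X_{w_1}}=G_1/\mathbb{E}[w_1(X)]$ and $F_{Y_{w_2}}=G_2/\mathbb{E}[w_2(Y)]$ with $G_1(x)=\int_0^x w_1'(t)\overline F_X(t)\,dt$ and $G_2(x)=\int_0^x w_2'(t)\overline F_Y(t)\,dt$, differentiating the ratio and clearing the positive constants gives the equivalent statement
\[
w_2'(x)\,\overline F_Y(x)\,G_1(x)\ \ge\ w_1'(x)\,\overline F_X(x)\,G_2(x),\qquad x\in(0,u_1).
\]

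Next I would exploit the monotonicity of $w_1'/w_2'$: for $t\le x$ one has $w_1'(t)\,w_2'(x)\le w_1'(x)\,w_2'(t)$, hence
\[
w_2'(x)\,G_1(x)=\int_0^x w_2'(x)\,w_1'(t)\,\overline F_X(t)\,dt\ \le\ w_1'(x)\int_0^x w_2'(t)\,\overline F_X(t)\,dt=w_1'(x)\,B(x),
\]
where $B(x)=\int_0^x w_2'(t)\overline F_X(t)\,dt$. Multiplying by $\overline F_Y(x)\ge0$ and chaining with the previous display yields $\overline F_Y(x)\,w_1'(x)\,B(x)\ge\overline F_Y(x)\,w_2'(x)\,G_1(x)\ge w_1'(x)\,\overline F_X(x)\,G_2(x)$; cancelling $w_1'(x)>0$ gives
\[
\frac{\overline F_Y(x)}{\overline F_X(x)}\ \ge\ \frac{G_2(x)}{B(x)},\qquad x\in(0,u_1),
\]
in which both integrals now carry the common weight $w_2'$.

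The final step recognizes this as a self-improving inequality. With $A:=G_2$ we have $A'(x)=w_2'(x)\overline F_Y(x)$ and $B'(x)=w_2'(x)\overline F_X(x)$, so the last display is exactly $A'(x)/B'(x)\ge A(x)/B(x)$, i.e.\ $A'(x)B(x)-A(x)B'(x)\ge0$; thus $A/B$ is nondecreasing on $(0,u_1)$. A L'H\^opital computation gives $\lim_{x\to0^+}A(x)/B(x)=\lim_{x\to0^+}\overline F_Y(x)/\overline F_X(x)=1$, since $\overline F_X(0^+)=\overline F_Y(0^+)=1$; here the assumption $w_1'(0)\ne0$ is what pins the shared lower endpoint $0$ of the two supports and makes this boundary evaluation legitimate. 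Being nondecreasing with left limit $1$, $A/B\ge1$, whence $\overline F_Y(x)\ge\overline F_X(x)$ on $(0,u_1)$. Since the inequality is trivial for $x\le0$ and for $x\ge u_1$ (where $\overline F_X\equiv0$), and $l_X\le l_Y$, $u_X\le u_Y$ are inherited from the rfr order, we conclude $\overline F_X\le\overline F_Y$ on $\mathbb{R}$, i.e.\ $X\le_{st}Y$.

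The hard part is the middle maneuver. The raw rfr inequality entangles the two distinct weights $w_1',w_2'$ with the cumulative integrals $G_1,G_2$ and affords no immediate comparison of $\overline F_X$ and $\overline F_Y$; the decisive device is to use the monotonicity of $w_1'/w_2'$ to ``homogenize'' the weight inside $G_1$, after which the inequality collapses to the differential condition forcing $A/B$ to increase and the normalization at the common lower endpoint propagates across the entire support. I expect the only delicate bookkeeping to be the endpoint alignment and the justification of the boundary limit, which is exactly where $w_1'(0)\ne0$ is used.
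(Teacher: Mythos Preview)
Your argument is correct but takes a different, somewhat longer route than the paper's. Both start from the same differential consequence of the rfr order, namely that $\zeta(t)=G_2(t)/G_1(t)$ is increasing, which gives $w_2'(t)\overline F_Y(t)\,G_1(t)\ge w_1'(t)\overline F_X(t)\,G_2(t)$. From here the paper proceeds directly: it rewrites this as
\[
\frac{w_2'(t)\,\overline F_Y(t)}{w_1'(t)\,\overline F_X(t)}\ \ge\ \zeta(t)\ \ge\ \lim_{z\to0^+}\zeta(z)\ =\ \frac{w_2'(0)}{w_1'(0)},
\]
the last equality by L'H\^opital (this is exactly where $w_1'(0)\ne0$ enters), and then rearranges to $\overline F_Y(t)/\overline F_X(t)\ge\bigl(w_1'(t)/w_2'(t)\bigr)\bigl(w_2'(0)/w_1'(0)\bigr)\ge1$, invoking the monotonicity of $w_1'/w_2'$ only in this final scalar step. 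You instead deploy the monotonicity of $w_1'/w_2'$ early, to ``homogenize'' the weight inside $G_1$ and replace it by the $w_2'$-weighted integral $B$; the resulting inequality $\overline F_Y/\overline F_X\ge A/B$ then becomes the self-improving relation $A'/B'\ge A/B$, whose limit at $0$ is simply $1$. Your bootstrap is elegant and in fact never needs the explicit value $\zeta(0^+)=w_2'(0)/w_1'(0)$, but it costs an extra integral comparison; the paper's path is one step shorter. One minor point: in your version the hypothesis $w_1'(0)\ne0$ plays no real role in the L'H\^opital step (both $A$ and $B$ carry $w_2'$), so your remark that it ``pins the shared lower endpoint'' is a post-hoc justification; in the paper's proof it is precisely what makes the limit $w_2'(0)/w_1'(0)$ well defined.
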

\begin{proof} Since \( X_{w_1} \leq_{rfr} Y_{w_2} \), \(S_X=S_1\) and \(S_Y=S_2\), we have \(l_X=l_1\leq l_2=l_Y\) and \(u_X=u_1\leq u_2=u_Y\). Now, \( X_{w_1} \leq_{rfr} Y_{w_2} \) implies that \(\zeta(\cdot)\) defined by
\[
\zeta(t) = \frac{\int_0^{t} w_2'(x)\, \overline{F}_Y(x)\,dx}{\int_0^{t} w_1'(x)\, \overline{F}_X(x)\,dx}, \quad t \in S_1 \cap S_2
\]
is increasing in \(t\). Thus, for \( t \geq z>0\), we have \( \zeta(t) \geq \zeta(z) \), which gives  
\begin{align}  
\frac{\int_0^{t} w_2'(x) \overline{F}_Y(x)dx}{\int_0^{t} w_1'(x) \overline{F}_X(x)dx} \geq \frac{\int_0^{z} w_2'(x) \overline{F}_Y(x)dx}{\int_0^{z} w_1'(x) \overline{F}_X(x)dx}. \label{9}  
\end{align}  
Since \( \zeta(\cdot) \) is increasing, we have \(\zeta^\prime(t) \geq 0 \) for \(t\in S_1\cap S_2\)
\begin{align}  
\text{ or,\quad } & \frac{w_2'(t) \overline{F}_Y(t) \int_0^{t} w_1'(x) \overline{F}_X(x)dx - w_1'(t) \overline{F}_X(t) \int_0^{t} w_2'(x) \overline{F}_Y(x)dx}{\bigg(\int_0^{t} w_1'(x) \overline{F}_X(x)dx\bigg)^2} \geq 0 \nonumber \\  
or,\quad& \frac{w_2'(t) \overline{F}_Y(t)}{w_1'(t) \overline{F}_X(t)} \geq \frac{\int_0^{t} w_2'(x) \overline{F}_Y(x)dx}{\int_0^{t} w_1'(x) \overline{F}_X(x)dx}. \label{10}  
\end{align}  
Using inequalities \eqref{9} and \eqref{10}, we obtain  
\begin{equation}  
\frac{w_2'(t) \overline{F}_Y(t)}{w_1'(t) \overline{F}_X(t)} \geq \lim_{z \to 0} \frac{\int_0^{z} w_2'(x) \overline{F}_Y(x)dx}{\int_0^{z} w_1'(x) \overline{F}_X(x)dx}. \label{11}  
\end{equation}  
Since the right-hand side of the inequality \eqref{11} is expressed as \( \frac{0}{0} \), the application of L'Hôpital's rule yields 
\begin{align}  
\frac{w_2'(t) \overline{F}_Y(t)}{w_1'(t) \overline{F}_X(t)} &\geq \frac{w_2'(0) \overline{F}_Y(0)}{w_1'(0) \overline{F}_X(0)} = \frac{w_2'(0)}{w_1'(0)}, \nonumber \\  
or,\qquad \frac{\overline{F}_Y(t)}{\overline{F}_X(t)} &\geq \frac{w_1'(t)}{w_2'(t)} \cdot \frac{w_2'(0)}{w_1'(0)}. \nonumber 
\end{align}  
Using the fact that \( w_1'(\cdot)/w_2'(\cdot)\) is increasing on \(S_X \cap S_Y\), we obtain \( \overline{F}_Y(t) \geq \overline{F}_X(t) \) for all \( t\in S_1\cap S_2=S_X\cap S_Y \). Hence, \(X \leq_{st} Y\).   
\end{proof}  
\begin{example}  
Let \( X \) and \( Y \) have CDFs \( F_X(x) = 1 - e^{-2x}, \quad x>0 \text{\: and \:} F_Y(x) = 1 - e^{-x},\quad x>0,\) respectively. Let \(w_1(x) = x^2\) and \(w_2(x) = x\), \(x > 0\). Then observe that  
\[
\frac{w_1'(x)}{w_2'(x)} = 2x,\quad x> 0,
\]  
 is increasing. Furthermore, the function
\begin{equation*}  
 \frac{F_{Y_{w_2}}(x)}{F_{X_{w_1}}(x)} = 2e^{2x} - 2(1 + x)e^{x}, \quad x > 0 
\end{equation*}
is increasing on \( (0,\, \infty) \). Therefore, \( X_{w_1} \leq_{rfr} Y_{w_2} \). Here, \( S_X = S_1 = S_Y = S_2 = (0,\, \infty) \). Applying \cref{theorem6}, we conclude that \( X \leq_{st} Y \).

\end{example}
If we set \( w_1(x) = w_2(x) = x \) in \cref{theorem6}, then the WTRVs \( X_{w_1} \) and \( Y_{w_2} \) becomes \( \widetilde{X} \) and \( \widetilde{Y} \), respectively. We state the following corollary.
\begin{corollary} If \( \widetilde{X} \leq_{rfr}\widetilde{Y} \), then \(X\leq_{st} Y\).
\end{corollary}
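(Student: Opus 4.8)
The plan is to derive the corollary as the special case of \cref{theorem6} corresponding to the identity weight function. First I would set $w_1(x) = w_2(x) = x$ for $x \ge 0$, so that $w_1'(x) = w_2'(x) = 1$ throughout. Under this choice the framework density \eqref{1} collapses to the equilibrium density \eqref{2}, whence $X_{w_1} = \widetilde{X}$ and $Y_{w_2} = \widetilde{Y}$. Consequently the hypothesis $\widetilde{X} \leq_{rfr} \widetilde{Y}$ is precisely the reversed-failure-rate ordering $X_{w_1} \leq_{rfr} Y_{w_2}$ demanded by \cref{theorem6}.

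It then remains to verify the remaining structural conditions of \cref{theorem6}. The ratio $w_1'(\cdot)/w_2'(\cdot)$ equals the constant $1$, which is increasing in the monotone (weak) sense fixed in Section~2, and the non-vanishing requirements $w_2'(x) \ne 0$ and $w_1'(0) \ne 0$ hold trivially since both derivatives are identically $1$. The support identifications $S_X = S_1$ and $S_Y = S_2$ presupposed by \cref{theorem6} are also in force: because $X$ and $Y$ are non-negative with lower bound $0$, the equilibrium density is a positive multiple of $\overline{F}_X(\cdot)$ (respectively $\overline{F}_Y(\cdot)$) and is therefore strictly positive exactly on the support of the baseline variable, so that $S_{\widetilde{X}} = S_X$ and $S_{\widetilde{Y}} = S_Y$. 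With every hypothesis confirmed, \cref{theorem6} yields $X \leq_{st} Y$.

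Since the whole argument reduces to substituting a fixed weight function into an already-established result, no genuine analytic difficulty arises; the points requiring care are conventional rather than substantive. The closest thing to an obstacle is recognizing that the constant ratio $w_1'/w_2' \equiv 1$ qualifies as an increasing function under the paper's monotonicity convention, and that the supports of $\widetilde{X}$ and $\widetilde{Y}$ coincide with those of $X$ and $Y$ — both resolved by the standing assumption that the baseline variables have lower bound $0$ together with the convention fixed in Section~2.
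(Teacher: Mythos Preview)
Your proposal is correct and matches the paper's approach: the corollary is obtained by specializing \cref{theorem6} to the identity weight \(w_1(x)=w_2(x)=x\), under which \(X_{w_1}=\widetilde{X}\), \(Y_{w_2}=\widetilde{Y}\), and the ratio \(w_1'/w_2'\equiv 1\) is (weakly) increasing with the non-vanishing conditions trivially met. The paper does not spell out the verification of the support identifications and monotonicity convention as you do, but the underlying argument is the same.
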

\begin{remark}  The above corollary was proved by \citet{li2008reversed}.
\end{remark} \par 
We now consider the minimum of two independent variable \( X \) and \( Y \), denoted by \( X \land Y \), and establish a likelihood ratio order between \( X_w \land Y_w \) and \( (X \land Y)_w \), the WTRV of \( X \land Y \).
\begin{theorem}  \label{theorem7}Let \( X \) and \( Y \) be independent with support \( S_X = S_Y \), and let \( X_w \) and \( Y_w \) also be independent. If\: \(X_w \leq_{fr} \;[\geq_{fr}]\; X\) and \( Y_w \leq_{fr} \;[\geq_{fr}]\; Y\), then\: \(X_w \land Y_w \leq_{lr} \;[\geq_{lr}]\; (X \land Y)_w \)
\end{theorem}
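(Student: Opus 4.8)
The plan is to verify the likelihood ratio order directly through its density characterization, by showing that the ratio $f_{(X\land Y)_w}(\cdot)/f_{X_w\land Y_w}(\cdot)$ is monotone on the common support. First I would assemble the three density ingredients. Since $X$ and $Y$ are independent, $\overline{F}_{X\land Y}(x)=\overline{F}_X(x)\overline{F}_Y(x)$, so by the WTRV formula \eqref{1},
\[
f_{(X\land Y)_w}(x) = \frac{w'(x)\,\overline{F}_X(x)\,\overline{F}_Y(x)}{\mathbb{E}[w(X\land Y)]}.
\]
Since $X_w$ and $Y_w$ are independent, the density of their minimum is
\[
f_{X_w\land Y_w}(x) = f_{X_w}(x)\,\overline{F}_{Y_w}(x) + f_{Y_w}(x)\,\overline{F}_{X_w}(x).
\]
Writing $G_X(x)=\int_x^{u_X} w'(t)\overline{F}_X(t)\,dt$ and $G_Y(x)=\int_x^{u_X} w'(t)\overline{F}_Y(t)\,dt$ for the unnormalized survival functions of $X_w$ and $Y_w$, and abbreviating $A=\mathbb{E}[w(X)]$, $B=\mathbb{E}[w(Y)]$, $C=\mathbb{E}[w(X\land Y)]$, I would substitute $f_{X_w}(x)=w'(x)\overline{F}_X(x)/A$ and $\overline{F}_{X_w}(x)=G_X(x)/A$ (and the analogues for $Y$) into the two displays above.

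The key simplification I expect is that the factor $w'(x)$ cancels between numerator and denominator, which is essential since $w'(\cdot)$ need not be monotone. Carrying out the algebra yields
\[
\frac{f_{(X\land Y)_w}(x)}{f_{X_w\land Y_w}(x)} = \frac{AB}{C}\cdot\frac{\overline{F}_X(x)\,\overline{F}_Y(x)}{\overline{F}_X(x)\,G_Y(x) + \overline{F}_Y(x)\,G_X(x)}.
\]
Since $AB/C$ is a positive constant, this ratio is increasing on the common support if and only if the function
\[
\frac{\overline{F}_X(x)\,G_Y(x) + \overline{F}_Y(x)\,G_X(x)}{\overline{F}_X(x)\,\overline{F}_Y(x)} = \frac{G_X(x)}{\overline{F}_X(x)} + \frac{G_Y(x)}{\overline{F}_Y(x)}
\]
is decreasing.

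To finish, I would translate the failure rate hypotheses into monotonicity statements for these two summands. Because $\overline{F}_{X_w}(x)=G_X(x)/A$, the failure rate characterization in \cref{definition2}(ii) shows that $X_w\leq_{fr}X$ holds exactly when $\overline{F}_X(\cdot)/\overline{F}_{X_w}(\cdot)$ is increasing, equivalently when $G_X(\cdot)/\overline{F}_X(\cdot)$ is decreasing; likewise $Y_w\leq_{fr}Y$ is equivalent to $G_Y(\cdot)/\overline{F}_Y(\cdot)$ being decreasing. Hence under the stated hypotheses the sum above is a sum of two decreasing functions, so it is decreasing, the ratio is increasing, and $X_w\land Y_w\leq_{lr}(X\land Y)_w$ follows. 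The bracketed case is entirely symmetric: $X_w\geq_{fr}X$ and $Y_w\geq_{fr}Y$ make each summand increasing, which reverses the conclusion to $X_w\land Y_w\geq_{lr}(X\land Y)_w$.

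I expect the main obstacle to be purely the bookkeeping in the second step, namely organizing the product-of-integrals expression so that $w'(x)$ cancels cleanly and the reciprocal collapses to the additive form $G_X/\overline{F}_X + G_Y/\overline{F}_Y$. Once that reduction is in hand, identifying each failure rate hypothesis with the monotonicity of one summand is immediate. The role of the hypothesis $S_X=S_Y$ is to guarantee that $X_w$, $Y_w$, $X_w\land Y_w$, and $(X\land Y)_w$ all share a common support, so that the ratio and the likelihood ratio comparison are well defined throughout.
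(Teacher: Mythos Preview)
Your proposal is correct and follows essentially the same route as the paper: both compute the density ratio, cancel the factor $w'(x)$, and reduce the question to the monotonicity of $\overline{F}_{X_w}(\cdot)/\overline{F}_X(\cdot)+\overline{F}_{Y_w}(\cdot)/\overline{F}_Y(\cdot)$ (your $G_X/\overline{F}_X+G_Y/\overline{F}_Y$ up to constants), which is exactly what the failure rate hypotheses deliver. The only cosmetic difference is that the paper works with the reciprocal ratio $f_{X_w\land Y_w}/f_{(X\land Y)_w}$ directly and keeps $\overline{F}_{X_w},\overline{F}_{Y_w}$ rather than introducing your unnormalized $G_X,G_Y$.
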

\begin{proof}
Since \( S_X = S_Y \) and a common \( w(\cdot) \) is used to formulate \( X_w \) and \( Y_w \), we have \(S_1 = (l_1,\, u_1) = S_2 = (l_2,\, u_2)\). For \(x \in S_1 \cap S_2\), consider the PDFs
\[f_{X_w \land Y_w}(x)=f_{X_w}(x) \overline{F}_{Y_w}(x)+f_{Y_w}(x) \overline{F}_{X_w}(x), \]
and \[ f_{(X\land Y)_w}(x)= \frac{w^{\prime}(x)\overline{F}_{X}(x)\overline{F}_{Y}(x)}{\mathbb{E}[w(X\land Y)]}.\]
Now, for \(x \in S_1\cap S_2\), we have
 \begin{align}
 \frac{f_{X_w \land Y_w}(x) }{f_{(X\land Y)_w}(x)}&=\frac{f_{X_w}(x) \overline{F}_{Y_w}(x)+f_{Y_w}(x) \overline{F}_{X_w}(x)}{w^{\prime}(x)\overline{F}_{X}(x)\overline{F}_{Y}(x)}\times\mathbb{E}[w(X\land Y)] \nonumber\\
 &=\left[ \frac{w^{\prime}(x)\overline{F}_{X}(x)\overline{F}_{Y_w}(x)}{\mathbb{E}[w(X)]w^{\prime}(x)\overline{F}_{X}(x)\overline{F}_{Y}(x)}+\frac{w^{\prime}(x)\overline{F}_{Y}(x)\overline{F}_{X_w}(x)}{\mathbb{E}[w(Y)]w^{\prime}(x)\overline{F}_{X}(x)\overline{F}_{Y}(x)}\right]\times\mathbb{E}[w(X\land Y)] \nonumber\\
 &=\left[ \frac{\overline{F}_{Y_w}(x)}{\mathbb{E}[w(X)]\overline{F}_{Y}(x)}+\frac{\overline{F}_{X_w}(x)}{\mathbb{E}[w(Y)]\overline{F}_{X}(x)}\right]\times\mathbb{E}[w(X\land Y)]. \label{12}
 \end{align}
 Thus, using Equation~\eqref{12}, the assumptions of the theorem, and the fact that the sum of two non-negative decreasing [increasing] functions is again a decreasing [increasing] function, the theorem follows.  
 \end{proof}
 If we set \( w(x) = x \) in \cref{theorem7}, then the conditions \( \widetilde{X} \leq_{fr} X \) and \(\widetilde{Y} \leq_{fr} Y \) becomes equivalent to that \( X \) and \( Y \) are DMRL. We state the following corollary.
 \begin{corollary} Let \( X \) and \( Y \) be independent with \(S_X=S_Y\), and let \( \widetilde{X} \) and \( \widetilde{Y} \) also be independent. If \( X \) and \( Y \) are DMRL, then \( \widetilde{X} \land \widetilde{Y} \leq_{lr} (\widetilde{X \land Y}) \).
 \end{corollary}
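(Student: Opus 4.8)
The plan is to derive this corollary as the specialization of \cref{theorem7} to the weight function $w(x)=x$, $x\geq 0$. First I would note that, under this choice, the framework in \eqref{2} produces the equilibrium variables $X_w=\widetilde{X}$ and $Y_w=\widetilde{Y}$, and likewise the WTRV of the minimum is $(X\land Y)_w=\widetilde{X\land Y}$; hence the conclusion $X_w\land Y_w\leq_{lr}(X\land Y)_w$ of \cref{theorem7} becomes precisely $\widetilde{X}\land\widetilde{Y}\leq_{lr}\widetilde{X\land Y}$. Because $X$ and $Y$ are independent with $S_X=S_Y$ and $\widetilde{X},\widetilde{Y}$ are assumed independent, the structural hypotheses of \cref{theorem7} hold verbatim, so the whole task reduces to matching the failure-rate hypotheses.

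The one substantive step is to show that the conditions $\widetilde{X}\leq_{fr}X$ and $\widetilde{Y}\leq_{fr}Y$ required by \cref{theorem7} are exactly the DMRL assumptions on $X$ and $Y$. For this I would integrate the equilibrium density in \eqref{2} to obtain the survival function $\overline{F}_{\widetilde{X}}(x)=\overline{F}_X(x)\,m_X(x)/\mu$, where $m_X(\cdot)$ is the MRL function and $\mu=\mathbb{E}[X]$. Invoking the characterization of the failure-rate order in \cref{definition2}(ii)---namely that $\widetilde{X}\leq_{fr}X$ holds if and only if $\overline{F}_X(\cdot)/\overline{F}_{\widetilde{X}}(\cdot)$ is increasing---the ratio simplifies to
\[
\frac{\overline{F}_X(x)}{\overline{F}_{\widetilde{X}}(x)}=\frac{\mu}{m_X(x)},
\]
which is increasing precisely when $m_X(\cdot)$ is decreasing, i.e.\ when $X$ is DMRL. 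The identical computation applied to $Y$ shows that $\widetilde{Y}\leq_{fr}Y$ is equivalent to $Y$ being DMRL.

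Putting the pieces together, the DMRL hypotheses on $X$ and $Y$ furnish exactly the orderings $\widetilde{X}\leq_{fr}X$ and $\widetilde{Y}\leq_{fr}Y$ demanded by \cref{theorem7}, and applying that theorem yields $\widetilde{X}\land\widetilde{Y}\leq_{lr}\widetilde{X\land Y}$, as claimed. I do not expect a genuine obstacle: the argument is a clean reduction to \cref{theorem7}, and the only point that must be handled carefully is the survival-function identity for $\widetilde{X}$ together with the resulting equivalence between $\widetilde{X}\leq_{fr}X$ and the DMRL property, which should be stated before the theorem is invoked.
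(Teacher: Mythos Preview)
Your proposal is correct and matches the paper's own derivation: the paper obtains this corollary precisely by setting \(w(x)=x\) in \cref{theorem7} and noting that the hypotheses \(\widetilde{X}\leq_{fr}X\) and \(\widetilde{Y}\leq_{fr}Y\) are equivalent to \(X\) and \(Y\) being DMRL. Your explicit verification of that equivalence via \(\overline{F}_{\widetilde{X}}(x)=\overline{F}_X(x)\,m_X(x)/\mu\) is a welcome expansion of a step the paper merely asserts.
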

\begin{remark} The above corollary was proved by \citet{bon2005ageing}.
\end{remark} \par
\cref{theorem7} can be generalized for more than two variables. We provide the following corollary.
\begin{corollary} Let \(X^1,\;X^2,\;\ldots,X^n\) be independent variables with common support, and let \(X^1_w,\;X^2_w,\;\ldots,X^n_w\) also be independent. If \(X^i_w \leq_{fr} \;[\geq_{fr}]\; X^i\)  for \(i=1,\;2,\,\cdots,\,n\), then \(X_w^1\land \cdots\land X_w^n \leq_{lr}\;[\geq_{lr}]\: (X^1\land \cdots\land X^n)_w\).
\end{corollary}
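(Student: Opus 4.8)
The plan is to generalize the direct density computation used in the proof of \cref{theorem7} rather than to induct on $n$ through repeated application of \cref{theorem7}. Induction is tempting but breaks down: writing $X^1\land\cdots\land X^n=(X^1\land\cdots\land X^{n-1})\land X^n$ and applying \cref{theorem7} would require control of $(X^1\land\cdots\land X^{n-1})_w$, the WTRV of the $(n-1)$-fold minimum, whereas any inductive hypothesis speaks of $X^1_w\land\cdots\land X^{n-1}_w$, the minimum of the WTRVs. Since the map $X\mapsto X_w$ does not commute with taking minima, these two objects differ and the recursion does not close. I would therefore treat all $n$ variables simultaneously.

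First I would write down the two densities on the common support. Using independence of $X^1_w,\ldots,X^n_w$, the density of the minimum is $f_{X^1_w\land\cdots\land X^n_w}(x)=\sum_{i=1}^n f_{X^i_w}(x)\prod_{j\neq i}\overline{F}_{X^j_w}(x)$, while the survival function of $X^1\land\cdots\land X^n$ is $\prod_{i=1}^n\overline{F}_{X^i}(x)$, so that Equation~\eqref{1} gives $f_{(X^1\land\cdots\land X^n)_w}(x)=w'(x)\prod_{i=1}^n\overline{F}_{X^i}(x)/\mathbb{E}[w(X^1\land\cdots\land X^n)]$. Because the $X^i$ share a common support and a single weight function $w(\cdot)$ is used throughout, all the WTRVs share one support; hence the boundary requirements $l\le l'$, $u\le u'$ of \cref{definition2}(i) hold with equality, and it remains only to control the monotonicity of the density ratio.

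Next I would form the ratio $f_{X^1_w\land\cdots\land X^n_w}(x)/f_{(X^1\land\cdots\land X^n)_w}(x)$ and substitute $f_{X^i_w}(x)=w'(x)\overline{F}_{X^i}(x)/\mathbb{E}[w(X^i)]$. The factor $w'(x)$ cancels, and in the $i$-th summand the factor $\overline{F}_{X^i}(x)$ cancels against the corresponding factor of $\prod_k\overline{F}_{X^k}(x)$, leaving the clean form
\[
\frac{f_{X^1_w\land\cdots\land X^n_w}(x)}{f_{(X^1\land\cdots\land X^n)_w}(x)}=\mathbb{E}\!\left[w(X^1\land\cdots\land X^n)\right]\sum_{i=1}^n\frac{1}{\mathbb{E}[w(X^i)]}\prod_{j\neq i}\frac{\overline{F}_{X^j_w}(x)}{\overline{F}_{X^j}(x)}.
\]
This is exactly the $n$-fold analogue of Equation~\eqref{12}, and carrying out this algebraic reduction cleanly is the main technical step.

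Finally I would invoke the hypothesis. Since $X^i_w\leq_{fr}X^i$ is equivalent to $\overline{F}_{X^i_w}(\cdot)/\overline{F}_{X^i}(\cdot)$ being decreasing, and this ratio is non-negative, each product $\prod_{j\neq i}\overline{F}_{X^j_w}/\overline{F}_{X^j}$ is a product of non-negative decreasing functions, hence itself non-negative and decreasing; multiplying by the positive constant $1/\mathbb{E}[w(X^i)]$ and summing over $i$ preserves this, and the overall positive factor $\mathbb{E}[w(X^1\land\cdots\land X^n)]$ does not affect monotonicity. Thus the displayed ratio is decreasing, so its reciprocal $f_{(X^1\land\cdots\land X^n)_w}/f_{X^1_w\land\cdots\land X^n_w}$ is increasing, which by \cref{definition2}(i) yields $X^1_w\land\cdots\land X^n_w\leq_{lr}(X^1\land\cdots\land X^n)_w$. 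In the bracketed case $X^i_w\geq_{fr}X^i$ each ratio is increasing, so each product and the full sum is increasing, giving the reversed order $\geq_{lr}$. The only points requiring care are the monotonicity bookkeeping, namely that products and sums of non-negative monotone functions retain their monotonicity, together with the support check noted above.
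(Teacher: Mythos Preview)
Your proposal is correct and follows exactly the approach the paper intends: the paper's proof consists solely of the remark that it is ``similar to the proof of \cref{theorem7},'' and what you have written is precisely the direct $n$-variable generalization of that computation, producing the natural analogue of Equation~\eqref{12}. Your observation that naive induction fails because $X\mapsto X_w$ does not commute with minima is a nice clarification of why the densities must be handled all at once.
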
 
\begin{remark} The proof of the above corollary is similar to the proof of \cref{theorem7}.
\end{remark} \par
The following theorem provides the conditions under which the usual stochastic ordering between \(X\) and \(Y\) remains preserved in their WTRVs \(X_w\) and \(Y_w\).
 \begin{theorem} \label{theorem8} Let \(X\) and \(Y\) be two independent variables. If \( w_1'(\cdot)/r_X(\cdot) \) is decreasing on \( \overline{S}_X = [l_X, u_X] \), \( w_2'(\cdot)/r_Y(\cdot) \) is increasing on \( \overline{S}_Y = [l_Y, u_Y] \), and \(
X \leq_{st} Y \), then \( \quad X_{w_1} \leq_{st} Y_{w_2}\).
\end{theorem}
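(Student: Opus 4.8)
The plan is to sidestep the auxiliary-variable machinery of \cref{lemma1} entirely and instead build a short chain of usual stochastic orders that closes under transitivity. The guiding observation is that each WTRV is nothing more than an ordinary \emph{weighted} version of its parent. Rewriting the survival factor in the defining density~\eqref{1} as $\overline{F}_X(x)=f_X(x)/r_X(x)$, I would record
\[
\frac{f_{X_{w_1}}(x)}{f_X(x)} = \frac{1}{\mathbb{E}[w_1(X)]}\cdot\frac{w_1'(x)}{r_X(x)},
\qquad
\frac{f_{Y_{w_2}}(x)}{f_Y(x)} = \frac{1}{\mathbb{E}[w_2(Y)]}\cdot\frac{w_2'(x)}{r_Y(x)}.
\]
Since $\mathbb{E}[w_1(X)]$ and $\mathbb{E}[w_2(Y)]$ are positive constants, the monotonicity of these two ratios is dictated solely by the hypotheses on $w_1'(\cdot)/r_X(\cdot)$ and $w_2'(\cdot)/r_Y(\cdot)$.

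First I would establish the two ``endpoint'' orderings. Because $w_1'(\cdot)/r_X(\cdot)$ is decreasing on $\overline{S}_X$, the ratio $f_{X_{w_1}}(\cdot)/f_X(\cdot)$ is decreasing; as $X_{w_1}$ and $X$ share the same support, the equivalent characterization in \cref{definition2}(i) gives $X_{w_1}\leq_{lr}X$. Symmetrically, $w_2'(\cdot)/r_Y(\cdot)$ increasing makes $f_{Y_{w_2}}(\cdot)/f_Y(\cdot)$ increasing, whence $Y\leq_{lr}Y_{w_2}$. Invoking the standard implication $X\leq_{lr}Y\Rightarrow X\leq_{st}Y$ (see \citet{shaked2007stochastic}), these upgrade to $X_{w_1}\leq_{st}X$ and $Y\leq_{st}Y_{w_2}$.

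Finally I would splice in the given hypothesis $X\leq_{st}Y$ and use transitivity of the usual stochastic order to assemble
\[
X_{w_1}\ \leq_{st}\ X\ \leq_{st}\ Y\ \leq_{st}\ Y_{w_2},
\]
which yields $X_{w_1}\leq_{st}Y_{w_2}$, as claimed. The step I expect to demand the most care is the first: making the monotone-ratio argument for $\leq_{lr}$ fully legitimate requires confirming that $S_{X_{w_1}}=S_X$ and $S_{Y_{w_2}}=S_Y$ (so that no support shrinkage occurs, which needs $w_1',w_2'$ strictly positive on the respective supports) and that the boundary conditions $l\leq l$, $u\leq u$ of \cref{definition2}(i) hold with equality. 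Once the supports are pinned down, the remainder is routine. I would also remark that the independence hypothesis is never actually used by this argument, since the conclusion concerns only the one-dimensional marginals; it is presumably retained so that an alternative, more laborious derivation through the test-function characterization of \cref{lemma1} remains available.
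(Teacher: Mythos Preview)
Your argument is correct and takes a genuinely different---and considerably shorter---route from the paper. The paper fixes $\theta\in S_1\cap S_2$, expands $\overline{F}_{Y_{w_2}}(\theta)-\overline{F}_{X_{w_1}}(\theta)$ as a difference of double integrals, and then verifies all four conditions of \cref{lemma1} for a pair of tailor-made test functions $\phi_1,\phi_2$ to conclude the sign. You instead factor the problem into three links, $X_{w_1}\leq_{lr}X$, $X\leq_{st}Y$, $Y\leq_{lr}Y_{w_2}$, using the weighted-density identity $f_{X_{w_1}}/f_X\propto w_1'/r_X$, and finish by transitivity of $\leq_{st}$. This is more conceptual, avoids \cref{lemma1} altogether, yields the stronger intermediate facts $X_{w_1}\leq_{lr}X$ and $Y\leq_{lr}Y_{w_2}$, and---as you observe---shows that the independence hypothesis is superfluous. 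The paper's approach, by contrast, is self-contained within its lemma-based framework and parallels the proofs of \cref{theorem9,theorem10}.

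On the support caveat you raise: you do not actually need $S_{X_{w_1}}=S_X$ and $S_{Y_{w_2}}=S_Y$. The monotonicity hypotheses already deliver the correct one-sided inclusions. Since $w_1'/r_X$ is decreasing and nonnegative on $\overline{S}_X$ and not identically zero, it must be strictly positive at $l_X$, giving $l_1=l_X$, while it may vanish on a right tail, giving $u_1\leq u_X$; these are precisely the inequalities $l_1\leq l_X$, $u_1\leq u_X$ required in \cref{definition2}(i) for $X_{w_1}\leq_{lr}X$. Dually, $w_2'/r_Y$ increasing forces $u_2=u_Y$ and $l_2\geq l_Y$, which is exactly what $Y\leq_{lr}Y_{w_2}$ needs. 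So the step you flagged as most delicate in fact resolves itself directly from the assumptions.
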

\begin{proof} Since \( X \leq_{st} Y \), it follows that \( l_X \leq l_Y \) and \( u_X \leq u_Y \). Furthermore, by definition \( w_1(\cdot) \) and \( w_2(\cdot) \) have same supports as the variable \( X \) and \( Y \), i.e., \(S_X=(l_X,\,u_X)\) and \(S_Y=(l_Y,\,u_Y)\), respectively, it follows that \( l_1 = l_X \) and \( u_2 = u_Y \). Hence, we have \( l_X = l_1 \leq l_Y \leq l_2 \) and \( u_1 \leq u_X \leq u_Y = u_2 \). Thus, \( l_1 \leq l_2 \) and \( u_1 \leq u_2 \). If \( S_1 \cap S_2 = \emptyset \), i.e., \( l_1 = l_X \leq u_1 < l_2 \leq u_Y = u_2 \), then \( X_{w_1} \leq_{\text{st}} Y_{w_2} \) holds trivially. Now, if \( S_1 \cap S_2 \neq \emptyset \), i.e., \(l_1=l_X\leq l_2<u_1\leq u_Y=u_2\), then, \(S_1\cap S_2 =(l_2,\,u_1)\). To prove \(X_{w_1} \leq_{st} Y_{w_2}\), we fix \(\theta \in S_1\cap S_2\) and consider \[\Delta_3= \overline{F}_{Y_{w_2}}(\theta)-\overline{F}_{X_{w_1}}(\theta).\] 
It then follows that, \( l_1 \leq l_2 < \theta < u_1 \leq u_2 \), and the expression for \(\mathbb{E}[w_1(X)]\mathbb{E}[w_2(Y)]\Delta_3\) is
\begin{align}
\mathbb{E}[w_1(X)]\mathbb{E}[w_2(Y)]\Delta_3&=\left[ \int_{l_1}^{u_1}w_1'(x)\overline{F}_X(x)dx\right]\left[\int_{\theta}^{u_2}w_2'(y)\overline{F}_Y(y)dy\right]\nonumber\\
&\hspace{3cm}-\left[\int_{l_2}^{u_2}w_2'(y)\overline{F}_Y(y)dy\right]\left[\int_{\theta}^{u_1}w_1'(x)\overline{F}_X(x)dx\right]\nonumber\\
&=\left[ \int_{l_X}^{u_X}\frac{w_1'(x)}{r_X(x)}f_X(x)dx\right]\left[\int_{\theta}^{u_Y}\frac{w_2'(y)}{r_Y(y)}f_Y(y)dy\right]\nonumber\\
&\hspace{3cm}-\left[\int_{l_Y}^{u_Y}\frac{w_2'(y)}{r_Y(y)}f_Y(y)dy\right]\left[\int_{\theta}^{u_X}\frac{w_1'(x)}{r_X(x)}f_X(x)dx\right].\nonumber 
\end{align}
 We define the functions \[ \phi_2(x, y) = \frac{w_1'(x)w_2'(y)}{r_X(x)r_Y(y)} \cdot I(\theta \leq y \leq u_Y), \quad  \phi_1(x, y) = \frac{w_1'(x)w_2'(y)}{r_X(x)r_Y(y)} \cdot I(\theta \leq x \leq u_X), \] and \( \Delta\phi_{21}(x, y) = \phi_2(x, y) - \phi_1(x, y) \) for \( (x, y) \in \overline{S}_X \times \overline{S}_Y \). Under these definitions and the independence of \( X \) and \( Y \),  expression for \( \mathbb{E}[w_1(X)]\mathbb{E}[w_2(Y)] \Delta_3 \) becomes
\begin{align*}
\mathbb{E}[w_1(X)]\mathbb{E}[w_2(Y)]\Delta_3&= \int_{l_X}^{u_X}\int_{\theta}^{u_Y}\frac{w_1'(x)w_2'(y)}{r_X(x)r_Y(y)}f_X(x)f_Y(y))dydx\\
&\hspace{2cm}-\int_{l_Y}^{u_Y}\int_{\theta}^{u_X}\frac{w_1'(x)w_2'(y)}{r_X(x)r_Y(y)}f_Y(y)f_X(x)dxdy\\
&=\mathbb{E}[\phi_2(X,Y)]-\mathbb{E}[\phi_1(X,Y)],\\
\end{align*}
and for \((x,\,y)\in\overline{S}_X\times\overline{S}_Y\), we obtain
\[
\Delta\phi_{21}(x,\,y) = 
\begin{cases}
\dfrac{w_1'(x)w_2'(y)}{r_X(x)r_Y(y)}, & \text{if \:} l_X<x<\theta \text{\: and\: } \theta\leq y<u_Y\\[10pt]
-\dfrac{w_1'(x)w_2'(y)}{r_X(x)r_Y(y)}, & \text{if \:} \theta\leq x<u_X \text{ \:and\: } l_Y<y<\theta \\[4pt]
 0, & \text{otherwise.}
\end{cases}
\]
We now verify conditions (i)–(iv) of \cref{lemma1} for the independent variable \(X\) and \(Y\). Since \( S_X \cap S_Y = (l_Y, u_X) \neq \emptyset \), it follows that \( \Delta\phi_{21}(x, y) \geq 0 \) for every \( (x, y) \in \overline{S}_X \times \overline{S}_Y \) with \( x \leq y \). Using the non-negative and decreasing properties of \( w_1'(\cdot)/r_X(\cdot) \)  on \( \overline{S}_X \), and non-negative and increasing  properties of \( w_2'(\cdot)/r_Y(\cdot) \) on \( \overline{S}_Y \), we have for each fixed \( y \in \overline{S}_Y \), the \( \Delta\phi_{21}(x, y) \) is decreasing in \( x \) over the interval \( [l_X, \min(y, u_X)) \); similarly, for each fixed \( x \in \overline{S}_X \),  \( \Delta\phi_{21}(x, y) \) is increasing in \( y \) on the interval \( [\max(x, l_Y), u_Y) \). Furthermore, for all \( (x, y)\) in \( (\overline{S}_X \cap \overline{S}_Y) \times (\overline{S}_X \cap \overline{S}_Y) \) with \( x \leq y \), we obtain
\[
\Delta\phi_{21}(x, y) + \Delta\phi_{21}(y, x) = \frac{w_1'(x)w_2'(y)}{r_X(x)r_Y(y)} - \frac{w_1'(y)w_2'(x)}{r_X(y)r_Y(x)} \geq 0,
\]
using the monotonicity of \( w_1'(\cdot)/r_X(\cdot) \) and \( w_2'(\cdot)/r_Y(\cdot) \).
 Since \( X \leq_{st} Y \), the independent variable \(X\) and \(Y\) satisfy all the conditions of \cref{lemma1}, we have
\[
\mathbb{E}[w_1(X)]\mathbb{E}[w_2(Y)]\Delta_3 = \mathbb{E}(\phi_2(X,Y)) - \mathbb{E}(\phi_1(X,Y)) \geq 0,
\]
which further implies that \( \overline{F}_{Y_{w_2}}(\theta) \geq \overline{F}_{X_{w_1}}(\theta) \) on \(S_1\cap S_2\).
This completes the proof.  
\end{proof} \par
The following theorem provides conditions under which the failure rate order between \( X \) and \( Y \) is preserved by \( X_w \) and \( Y_w \).
\begin{theorem} \label{theorem9} Let \(X\) and \(Y\) be two independent variables. If \( l_1 \leq l_2 \), \( u_1 \leq u_2 \), \( w_1'(\cdot)/r_X(\cdot) \) is a decreasing function on \( S_X = (l_X,\, u_X) \), \( w_2'(\cdot)/r_Y(\cdot) \) is an increasing function on \( S_Y = (l_Y,\, u_Y) \), and \(X \leq_{fr} Y \), then
\(\: X_{w_1} \leq_{fr} Y_{w_2}\).
\end{theorem}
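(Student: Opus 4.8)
The plan is to verify the characterization of the failure rate order in \cref{definition2}(ii): since $l_1 \leq l_2$ and $u_1 \leq u_2$ are assumed, it suffices to show that
\[
\Delta := \overline{F}_{X_{w_1}}(x_1)\,\overline{F}_{Y_{w_2}}(x_2) - \overline{F}_{Y_{w_2}}(x_1)\,\overline{F}_{X_{w_1}}(x_2) \geq 0
\]
for all $x_1 \leq x_2$ in $S_1 \cap S_2$. First I would write the survival functions of the WTRVs as tail integrals of their PDFs and convert each factor via $\overline{F}_X = f_X/r_X$ and $\overline{F}_Y = f_Y/r_Y$, so that $\mathbb{E}[w_1(X)]\mathbb{E}[w_2(Y)]\Delta$ becomes a difference of products of tail integrals of $\frac{w_1'(\cdot)}{r_X(\cdot)}f_X(\cdot)$ and $\frac{w_2'(\cdot)}{r_Y(\cdot)}f_Y(\cdot)$, exactly mirroring the setup in the proof of \cref{theorem8} but with the tails arranged for the failure rate rather than the usual stochastic order.

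Next, exploiting the independence of $X$ and $Y$, I would recast this difference as $\mathbb{E}[\phi_2(X,Y)] - \mathbb{E}[\phi_1(X,Y)]$ by defining
\[
\phi_2(x,y) = \frac{w_1'(x)w_2'(y)}{r_X(x)r_Y(y)}\,I(x_1 \leq x)\,I(x_2 \leq y), \qquad \phi_1(x,y) = \frac{w_1'(x)w_2'(y)}{r_X(x)r_Y(y)}\,I(x_2 \leq x)\,I(x_1 \leq y).
\]
A direct case analysis of the indicators (using $x_1 \leq x_2$) then shows that $\Delta\phi_{21}(x,y) = \phi_2(x,y) - \phi_1(x,y)$ equals $+\frac{w_1'(x)w_2'(y)}{r_X(x)r_Y(y)}$ on $\{x_1 \leq x < x_2,\ x_2 \leq y\}$, equals $-\frac{w_1'(x)w_2'(y)}{r_X(x)r_Y(y)}$ on $\{x_2 \leq x,\ x_1 \leq y < x_2\}$, and vanishes otherwise.

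The goal is then to apply \cref{lemma2} to the independent pair $X$ and $Y$, which is legitimate because $X \leq_{fr} Y$ (this also supplies $l_X \leq l_Y$ and $u_X \leq u_Y$). Conditions (i) and (ii) of \cref{lemma2} are routine: when $x \leq y$ the negative branch of $\Delta\phi_{21}$ cannot occur, since it would force $y < x_2 \leq x$, which gives (i); and for fixed $x$ the function is $0$ until $y = x_2$, where it jumps to a positive value and thereafter increases because $w_2'(\cdot)/r_Y(\cdot)$ is increasing, which gives (ii).

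The main obstacle is condition (iii): for $x \leq y$ I must show $\Delta\phi_{21}(x,y) + \Delta\phi_{21}(y,x) \geq 0$. A region-by-region check shows the sum vanishes in every configuration except $x_1 \leq x < x_2 \leq y$, where it reduces to
\[
\frac{w_1'(x)}{r_X(x)}\cdot\frac{w_2'(y)}{r_Y(y)} - \frac{w_1'(y)}{r_X(y)}\cdot\frac{w_2'(x)}{r_Y(x)} \geq 0,
\]
which follows by multiplying $\frac{w_1'(x)}{r_X(x)} \geq \frac{w_1'(y)}{r_X(y)}$ (from the decreasing property of $w_1'/r_X$) with $\frac{w_2'(y)}{r_Y(y)} \geq \frac{w_2'(x)}{r_Y(x)}$ (from the increasing property of $w_2'/r_Y$), all factors being non-negative. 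With (i)--(iii) verified, \cref{lemma2} yields $\mathbb{E}[\phi_2(X,Y)] \geq \mathbb{E}[\phi_1(X,Y)]$, hence $\Delta \geq 0$, establishing $X_{w_1} \leq_{fr} Y_{w_2}$. The only other place requiring care is the bookkeeping of integration limits and the conversion between $\overline{F}$ and $f/r$, which is valid on the respective supports.
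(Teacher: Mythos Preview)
Your proposal is correct and follows essentially the same approach as the paper: you express the survival-function inequality as a difference of tail integrals, convert via $\overline{F}=f/r$, recast it as $\mathbb{E}[\phi_2(X,Y)]-\mathbb{E}[\phi_1(X,Y)]$ with the same indicator-based $\phi_1,\phi_2$, and then verify conditions (i)--(iii) of \cref{lemma2} using exactly the monotonicity of $w_1'/r_X$ and $w_2'/r_Y$. The paper additionally spells out the trivial boundary cases (when $S_1\cap S_2=\emptyset$, and the behavior of the ratio on $(-\infty,l_2]\cup[u_1,u_2)$), which you correctly flag as routine bookkeeping.
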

\begin{proof} From the ordering \(X \leq_{fr} Y\), we have \(l_X \leq l_Y\) and \(u_X \leq u_Y\). In the case of \(S_1 \cap S_2 = \emptyset\), i.e., when \(l_1 < u_1 \leq l_2 < u_2\), \(X_{w_1} \leq_{fr} Y_{w_2}\) holds trivially. Now consider the case where \(S_1 \cap S_2 \neq \emptyset\), that is, \(l_1 \leq l_2 < u_1 \leq u_2\), so that \(S_1 \cap S_2 = (l_2,\, u_1)\). Now, for \( x \in (-\infty,\, l_2] \), we have \( \overline{F}_{Y_{w_2}}(x) = 1 \), and the ratio \( \overline{F}_{Y_{w_2}}(x) / \overline{F}_{X_{w_1}}(x) \) is increasing. For \( x \in [u_1,\, u_2) \), we have \( \overline{F}_{X_{w_1}}(x) = 0 \), and the ratio is infinity, assuming that \( c/0 = \infty \) for \( c > 0 \). Therefore, the ratio \( \overline{F}_{Y_{w_2}}(x) / \overline{F}_{X_{w_1}}(x) \) is increasing on \( (-\infty,\, l_2] \cup [u_1,\, u_2) \). To establish that the ratio \(\overline{F}_{Y_{w_2}}(x)/\overline{F}_{X_{w_1}}(x)\) is increasing on \( S_1 \cap S_2= (l_2, u_1) \), take arbitrary \(s\) and \(t\) with \( l_2 < s < t < u_1 \), and consider the expression
\[
\Delta_4 = \overline{F}_{Y_{w_2}}(t)\, \overline{F}_{X_{w_1}}(s) - \overline{F}_{Y_{w_2}}(s)\, \overline{F}_{X_{w_1}}(t).
\]
Then, using the PDFs of \( X_{w_1} \) and \( Y_{w_2} \), we have the expression for \(\mathbb{E}[w_1(X)]\mathbb{E}[w_2(Y)]\Delta_4 \) as follows 
\begin{align*}
 \mathbb{E}[w_1(X)]\mathbb{E}[w_2(Y)]\,\Delta_4 &=\biggl[\int_{t}^{u_2} w_2'(y)\,\overline{F}_Y(y)\,\mathrm{d}y\biggr]
\biggl[\int_{s}^{u_1} w_1'(x)\,\overline{F}_X(x)\,\mathrm{d}x\biggr]\\
&\qquad
-\,\biggl[\int_{s}^{u_2} w_2'(y)\,\overline{F}_Y(y)\,\mathrm{d}y\biggr]
\biggl[\int_{t}^{u_1} w_1'(x)\,\overline{F}_X(x)\,\mathrm{d}x\biggr]\\
&=\,
\biggl[\int_{t}^{u_2} \frac{w_2'(y)}{r_Y(y)}\,f_Y(y)\,\mathrm{d}y\biggr]
\biggl[\int_{s}^{u_1} \frac{w_1'(x)}{r_X(x)}\,f_X(x)\,\mathrm{d}x\biggr]\\
&\qquad
-\,\biggl[\int_{s}^{u_2} \frac{w_2'(y)}{r_Y(y)}\,f_Y(y)\,\mathrm{d}y\biggr]
\biggl[\int_{t}^{u_1} \frac{w_1'(x)}{r_X(x)}\,f_X(x)\,\mathrm{d}x\biggr].
\end{align*}
Now, define the functions 
\[
\phi_2(x, y) = \dfrac{w_1^{\prime}(x)w_2^{\prime}(y)}{r_X(x)r_Y(y)} \cdot I(s \leq x < u_X) I(t \leq y < u_Y), \quad (x, y) \in S_X \times S_Y,
\]
\[
\phi_1(x, y) = \dfrac{w_1^{\prime}(x)w_2^{\prime}(y)}{r_X(x)r_Y(y)}\cdot I(t \leq x < u_X) I(s \leq y < u_Y), \quad (x, y) \in S_X \times S_Y,
\]
and $\Delta \phi_{21}(x, y) = \phi_2(x, y) - \phi_1(x, y)$, $(x, y) \in S_X \times S_Y$.  Now, using the independence of \( X \) and \( Y \), we have the expression for \(\mathbb{E}[w_1(X)]\mathbb{E}[w_2(Y)] \Delta_4\) as follows
\begin{align*}
\mathbb{E}[w_1(X)]\mathbb{E}[w_2(Y)]\Delta_4
&=  \int_t^{u_Y}\int_s^{u_X}\dfrac{w_1^{\prime}(x)}{r_X(x)} \dfrac{w_2^{\prime}(y)}{r_Y(y)} f_X(x) f_Y(y) dxdy\\
&\qquad\qquad - \int_s^{u_Y}\int_t^{u_X} \dfrac{w_2^{\prime}(y)}{ r_Y(y)}  \dfrac{w_1^{\prime}(x)}{r_X(x)}f_X(x)f_Y(y) dxdy.\\
&=\mathbb{E}[\phi_2(X, Y)] - \mathbb{E}[\phi_1(X, Y)].
\end{align*}
For $(x, y) \in S_X \times S_Y$,
\[
\Delta \phi_{21}(x, y) =
\begin{cases}
\dfrac{w_1^{\prime}(x)w_2^{\prime}(y)}{r_X(x)r_Y(y)}, & \text{if } s \leq x < t \text{ and } t \leq y < u_Y, \\[10pt]
-\dfrac{w_1^{\prime}(x)w_2^{\prime}(y)}{r_X(x)r_Y(y)}, & \text{if } t \leq x < u_X \text{ and } s \leq y < t, \\
0, & \text{otherwise}.
\end{cases}
\]
Since \( S_1 \subseteq S_X \), \( S_2 \subseteq S_Y \), and \( S_1 \cap S_2 \neq \emptyset \), it follows that \( S_X \cap S_Y \neq \emptyset \), \( S_X \cap S_Y = (l_Y, u_X) \), and \( l_X \leq l_Y \leq l_2 < s < t < u_1 \leq u_X \leq u_Y\). \par  
We now verify conditions (i)–(iii) of \cref{lemma2} for the independent variables \(X\) and \(Y\). From the definition of \( \Delta \phi_{21}(\cdot, \cdot) \), it is evident that \( \Delta \phi_{21}(x, y) \geq 0 \) for every \( (x, y) \in S_X \times S_Y \) and \( x \leq y \). Moreover, for any fixed \( x \in S_X \cap S_Y \), the increasing property of the function \( w_2'(\cdot)/r_Y(\cdot) \) ensures the increasing property of \( \Delta \phi_{21}(x, y) \) for \( y \in (x, u_Y) \). Furthermore, using the decreasing property of \( w_1'(\cdot)/r_X(\cdot) \), the increasing property of \( w_2'(\cdot)/r_Y(\cdot) \), and that \( x \leq y \), we get
\[\Delta\phi_{21}(x,\,y)+\Delta\phi_{21}(y,\,x)=\dfrac{w_1'(x)w_2'(y)}{r_X(x)r_Y(y)}-\dfrac{w_1'(y)w_2'(x)}{r_X(y)r_Y(x)}\geq 0,\]
 whenever $(x, y) \in S_X \times S_Y$. Since \( X \leq_{fr} Y \) and \( X \) and \( Y \) are independent variables satisfying all conditions of \cref{lemma2}, we have
\begin{align*}
\mathbb{E}[w_1(X)]\mathbb{E}[w_2(Y)] \Delta_4&=\mathbb{E}[\phi_2(X, Y)] - \mathbb{E}[\phi_1(X, Y)] \geq 0,
\end{align*}
which further implies that \( \Delta_4=\overline{F}_{Y_{w_2}}(t)\overline{F}_{X_{w_1}}(s) - \overline{F}_{Y_{w_2}}(s)\overline{F}_{X_{w_1}}(t) \geq 0\). Therefore, the function \( \overline{F}_{Y_{w_2}}(\cdot)/\overline{F}_{X_{w_1}}(\cdot) \) is increasing on \( (-\infty,\, u_2) \), which implies that \( X_{w_1} \leq_{fr} Y_{w_2} \). 
\end{proof}
\begin{example} Let \( X \) follow the standard uniform distribution and \( Y \) follow the exponential distribution with rate \( \lambda = 1 \). Then, \(r_X(\cdot)\) and \(\)\(r_Y(\cdot)\) are defined as
\[
r_X(x) = 
\dfrac{1}{1 - x},\quad 0 < x < 1, \quad \text{and} \quad r_Y(x) = 1\quad \text{for } x > 0,
\]
respectively. For \( x < 0 \), \( r_X(x) = r_Y(x) = 0 \); for \( 0 < x < 1 \), \( r_X(x) \geq r_Y(x) \). Hence, \( X \leq_{fr} Y \). Let \(w_1(\cdot)\) and \(w_2(\cdot)\) are defined by
\[
w_1(x) = e^x - 1, \quad x > 0, \quad \text{and} \quad w_2(x) = x, \quad x > 0,
\]
respectively. Now observe that
\[
\frac{w_1^{\prime}(x)}{r_X(x)} = e^x(1 - x), \quad 0<x <1, \quad \text{and} \quad \frac{w_2^{\prime}(x)}{r_Y(x)} = 1, \quad x > 0,
\]
which are decreasing and increasing functions, respectively. Hence, applying \cref{theorem9}, we deduce that \(X_{w_1} \leq_{fr} Y_{w_2}\).
\end{example}
\begin{remark}
In Example~7, \( w_2'(x)/w_1'(x) = e^{-x} \) is not increasing for \( x > 0 \), so the condition of \cref{theorem5} is not satisfied.
The PDFs of \(X_{w_1}\) and \(Y_{w_2}\) are given by
\[
f_{X_{w_1}}(x) = \frac{e^x(1 - x)}{e - 2}, \quad 0 < x < 1, \quad \text{and} \quad f_{Y_{w_2}}(x) = e^{-x},\quad x>0.
\]
Now, consider the function
\[
\frac{f_{Y_{w_2}}(x)}{f_{X_{w_1}}(x)} = \frac{e^{-2x}(e - 2)}{1 - x}, \quad 0<x<1.
\]
We plot \( f_{Y_{w_2}}(\cdot)/f_{X_{w_1}}(\cdot) \) to demonstrate that it is not increasing. Therefore, the likelihood ratio order \( X_{w_1} \leq_{lr} Y_{w_2} \) does not hold.
\begin{figure}[H]
  \centering
  \includegraphics[width=0.6\textwidth]{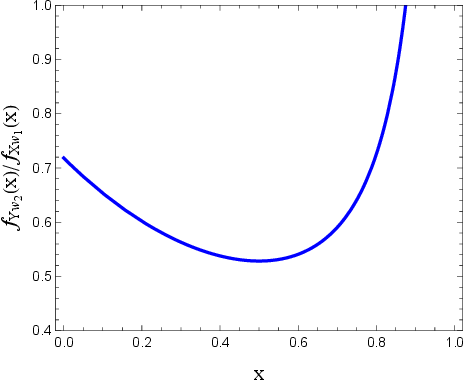} 
  \caption{\normalsize Plot of the function \(f_{Y_{w_2}}(\cdot)/f_{X_{w_1}}(\cdot)\).}
  \label{figexample7}
\end{figure}
\end{remark} \par
The following theorem provides conditions under which the reversed failure rate order between \( X \) and \( Y \) is preserved by \( X_w \) and \( Y_w \).
\begin{theorem} \label{theorem10} Let \(X\) and \(Y\) be two independent variables. If \( l_1 \leq l_2 \), \( u_1 \leq u_2 \), \( w_1'(\cdot)/r_X(\cdot) \) is a decreasing function on \( S_X = (l_X,\, u_X) \), \( w_2'(\cdot)/r_Y(\cdot) \) is an increasing function on \( S_Y = (l_Y,\, u_Y) \), and \(X \leq_{rfr} Y \), then
\(\: X_{w_1} \leq_{rfr} Y_{w_2}\).
\end{theorem}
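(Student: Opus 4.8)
The plan is to follow the architecture of the proof of \cref{theorem9} almost verbatim, but with distribution functions in place of survival functions and with \cref{lemma3} (the reversed-failure-rate analogue of \cref{lemma2}) replacing \cref{lemma2}. By \cref{definition2}(iii) it suffices to show that \( F_{Y_{w_2}}(\cdot)/F_{X_{w_1}}(\cdot) \) is increasing on \( (\min(l_1,l_2),\infty)=(l_1,\infty) \). As before, \( X \leq_{rfr} Y \) yields \( l_X \leq l_Y \) and \( u_X \leq u_Y \); together with \( l_1 \leq l_2 \) and \( u_1 \leq u_2 \) this lets me dispose of the trivial case \( S_1 \cap S_2 = \emptyset \) and then concentrate on \( S_1 \cap S_2 = (l_2,u_1) \).

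First I would fix \( s,t \) with \( l_2 < s < t < u_1 \) and set
\[
\Delta = F_{Y_{w_2}}(t)\,F_{X_{w_1}}(s) - F_{Y_{w_2}}(s)\,F_{X_{w_1}}(t),
\]
so that increasingness of the ratio on \( (l_2,u_1) \) is equivalent to \( \Delta \geq 0 \). Writing \( F_{X_{w_1}}(x)=\mathbb{E}[w_1(X)]^{-1}\int_{l_X}^{x} w_1'(z)\overline{F}_X(z)\,dz \) (and similarly for \( Y_{w_2} \)) and using \( \overline{F}_X = f_X/r_X \), I would turn \( \mathbb{E}[w_1(X)]\mathbb{E}[w_2(Y)]\Delta \) into a difference of products of integrals of \( \tfrac{w_1'}{r_X}f_X \) and \( \tfrac{w_2'}{r_Y}f_Y \). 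The one structural change from \cref{theorem9} is that every integral now runs from the lower endpoint \emph{up to} \( s \) or \( t \); accordingly I would define
\[
\phi_2(x,y) = \frac{w_1'(x)w_2'(y)}{r_X(x)r_Y(y)}\,I(l_X < x \leq s)\,I(l_Y < y \leq t),
\]
\[
\phi_1(x,y) = \frac{w_1'(x)w_2'(y)}{r_X(x)r_Y(y)}\,I(l_X < x \leq t)\,I(l_Y < y \leq s),
\]
so that independence of \( X \) and \( Y \) gives \( \mathbb{E}[w_1(X)]\mathbb{E}[w_2(Y)]\Delta = \mathbb{E}[\phi_2(X,Y)] - \mathbb{E}[\phi_1(X,Y)] \).

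Next I would compute \( \Delta\phi_{21}=\phi_2-\phi_1 \), which equals \( +\tfrac{w_1'(x)w_2'(y)}{r_X(x)r_Y(y)} \) on \( \{l_X < x \leq s,\ s < y \leq t\} \), its negative on \( \{s < x \leq t,\ l_Y < y \leq s\} \), and \( 0 \) otherwise, and then verify conditions (i)--(iii) of \cref{lemma3}. Condition (i) is immediate because the negative branch forces \( y < x \), so \( x \leq y \) never lands there. For condition (iii), the only nontrivial configuration with \( x \leq y \) is \( x \leq s < y \leq t \), where the symmetrised expression is
\[
\frac{w_1'(x)w_2'(y)}{r_X(x)r_Y(y)} - \frac{w_1'(y)w_2'(x)}{r_X(y)r_Y(x)} \geq 0,
\]
which follows by multiplying the inequalities \( w_1'(x)/r_X(x) \geq w_1'(y)/r_X(y) \) (from the decreasing property of \( w_1'(\cdot)/r_X(\cdot) \)) and \( w_2'(y)/r_Y(y) \geq w_2'(x)/r_Y(x) \) (from the increasing property of \( w_2'(\cdot)/r_Y(\cdot) \)), both sides being non-negative.

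The step I expect to demand the most care is condition (ii) of \cref{lemma3}, which---unlike the analogous condition of \cref{lemma2} used in \cref{theorem9}---requires that for each fixed \( y \) the map \( x \mapsto \Delta\phi_{21}(x,y) \) be \emph{decreasing} on \( (l_X,y) \). Here the positive branch equals \( \tfrac{w_1'(x)}{r_X(x)}\cdot\tfrac{w_2'(y)}{r_Y(y)} \), with \( \tfrac{w_2'(y)}{r_Y(y)}>0 \) a fixed factor, so the decreasing property of \( w_1'(\cdot)/r_X(\cdot) \) governs the interior while the drop to \( 0 \) as \( x \) crosses \( s \) is consistent with decreasingness; I would confirm this by treating the sub-cases \( y \leq s \), \( s < y \leq t \), and \( y > t \) separately. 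With all conditions in hand, \cref{lemma3} applied to the independent variables with \( X \leq_{rfr} Y \) delivers \( \mathbb{E}[\phi_2(X,Y)] \geq \mathbb{E}[\phi_1(X,Y)] \), hence \( \Delta \geq 0 \) and the ratio is increasing on \( (l_2,u_1) \). Finally I would extend monotonicity to all of \( (l_1,\infty) \) by observing that the ratio is \( 0 \) on \( (l_1,l_2) \), equals \( F_{Y_{w_2}}(\cdot) \) (hence increasing) on \( [u_1,u_2) \) where \( F_{X_{w_1}}\equiv 1 \), and equals \( 1 \) beyond \( u_2 \), with these pieces joining monotonically; this establishes \( X_{w_1} \leq_{rfr} Y_{w_2} \).
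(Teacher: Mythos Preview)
Your proposal is correct and follows essentially the same approach as the paper's own proof: the same reduction to showing \(F_{Y_{w_2}}(t)F_{X_{w_1}}(s)-F_{Y_{w_2}}(s)F_{X_{w_1}}(t)\ge 0\) on \(S_1\cap S_2\), the same rewriting via \(\overline{F}=f/r\), the same choice of \(\phi_1,\phi_2\) with lower-truncated indicators, and the same verification of the three conditions of \cref{lemma3}. Your treatment of condition~(ii) (splitting into the sub-cases \(y\le s\), \(s<y\le t\), \(y>t\)) and your explicit handling of the boundary pieces on \((l_1,l_2]\) and \([u_1,\infty)\) are in fact slightly more detailed than what the paper writes out.
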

\begin{proof} From the ordering \(X \leq_{rfr} Y\), we have \(l_X \leq l_Y\) and \(u_X \leq u_Y\). In the case of \(S_1 \cap S_2 = \emptyset\), i.e., when \(l_1 < u_1 \leq l_2 < u_2\), \(X_{w_1} \leq_{rfr} Y_{w_2}\) holds trivially. Now consider the case where \(S_1 \cap S_2 \neq \emptyset\), that is, \(l_1 \leq l_2 < u_1 \leq u_2\), so that \(S_1 \cap S_2 = (l_2,\, u_1)\). We now show that \( F_{Y_{w_2}}(x) /F_{X_{w_1}}(x) \) is increasing on \((l_1,\:\infty)\). For \( x \in (l_1,\, l_2] \), we have \( F_{Y_{w_2}}(x) = 0 \), and consequently the ratio \( F_{Y_{w_2}}(x) / F_{X_{w_1}}(x) \) is zero. For \( x \in [u_1,\, \infty) \), we have \( F_{X_{w_1}}(x) = 1 \), and hence  \( F_{Y_{w_2}}(x) / F_{X_{w_1}}(x)=F_{Y_{w_2}}(x) \) , which is again an increasing function. Therefore, the ratio \( F_{Y_{w_2}}(x) / F_{X_{w_1}}(x) \) is increasing on \( (l_1,\, l_2] \cup [u_1,\, \infty) \). To establish that the ratio \(F_{Y_{w_2}}(x)/F_{X_{w_1}}(x)\) is increasing on \( S_1 \cap S_2= (l_2, u_1) \), take arbitrary \(s\) and \(t\) with \( l_2 < s < t < u_1 \), and consider the expression
\[
\Delta_5 = F_{Y_{w_2}}(t)\, F_{X_{w_1}}(s) - F_{Y_{w_2}}(s)\, F_{X_{w_1}}(t).
\]
Then, using the PDFs of \( X_{w_1} \) and \( Y_{w_2} \), we have the expression for \(\mathbb{E}[w_1(X)]\mathbb{E}[w_2(Y)]\Delta_5 \) as follows 
\begin{align*}
 \mathbb{E}[w_1(X)]\mathbb{E}[w_2(Y)]\,\Delta_5 &=\biggl[\int_{l_2}^{t} w_2'(y)\,\overline{F}_Y(y)\,\mathrm{d}y\biggr]
\biggl[\int_{l_1}^{s} w_1'(x)\,\overline{F}_X(x)\,\mathrm{d}x\biggr]\\
&\qquad
-\,\biggl[\int_{l_2}^{s} w_2'(y)\,\overline{F}_Y(y)\,\mathrm{d}y\biggr]
\biggl[\int_{l_1}^{t} w_1'(x)\,\overline{F}_X(x)\,\mathrm{d}x\biggr]\\
&=\,
\biggl[\int_{l_2}^{t} \frac{w_2'(y)}{r_Y(y)}\,f_Y(y)\,\mathrm{d}y\biggr]
\biggl[\int_{l_1}^{s} \frac{w_1'(x)}{r_X(x)}\,f_X(x)\,\mathrm{d}x\biggr]\\
&\qquad
-\,\biggl[\int_{l_2}^{s} \frac{w_2'(y)}{r_Y(y)}\,f_Y(y)\,\mathrm{d}y\biggr]
\biggl[\int_{l_1}^{t} \frac{w_1'(x)}{r_X(x)}\,f_X(x)\,\mathrm{d}x\biggr].
\end{align*}
Now, define the functions 
\[
\phi_2(x, y) = \dfrac{w_1^{\prime}(x)w_2^{\prime}(y)}{r_X(x)r_Y(y)} \cdot I(l_X \leq x < s) I(l_Y \leq y < t), \quad (x, y) \in S_X \times S_Y,
\]
\[
\phi_1(x, y) = \dfrac{w_1^{\prime}(x)w_2^{\prime}(y)}{r_X(x)r_Y(y)}\cdot I(l_X \leq x < t) I(l_Y\leq y < s), \quad (x, y) \in S_X \times S_Y,
\]
and $\Delta \phi_{21}(x, y) = \phi_2(x, y) - \phi_1(x, y)$, $(x, y) \in S_X \times S_Y$.  Now, using the independence of \( X \) and \( Y \), we have the expression for \(\mathbb{E}[w_1(X)]\mathbb{E}[w_2(Y)] \Delta_5\) as follows
\begin{align*}
\mathbb{E}[w_1(X)]\mathbb{E}[w_2(Y)]\Delta_5
&=  \int_{l_Y}^{t}\int_{l_X}^s\dfrac{w_1^{\prime}(x)}{r_X(x)} \dfrac{w_2^{\prime}(y)}{r_Y(y)} f_X(x) f_Y(y) dxdy\\
&\qquad\qquad - \int_{l_Y}^s\int_{l_X}^t \dfrac{w_2^{\prime}(y)}{ r_Y(y)}  \dfrac{w_1^{\prime}(x)}{r_X(x)}f_X(x)f_Y(y) dxdy.\\
&=\mathbb{E}[\phi_2(X, Y)] - \mathbb{E}[\phi_1(X, Y)].
\end{align*}
For $(x, y) \in S_X \times S_Y$,
\[
\Delta \phi_{21}(x, y) =
\begin{cases}
\dfrac{w_1^{\prime}(x)w_2^{\prime}(y)}{r_X(x)r_Y(y)}, & \text{if } l_X \leq x < s \text{ and } s \leq y < t, \\[10pt]
-\dfrac{w_1^{\prime}(x)w_2^{\prime}(y)}{r_X(x)r_Y(y)}, & \text{if } s \leq x < t \text{ and } l_Y \leq y < s, \\
0, & \text{otherwise}.
\end{cases}
\] 
Since \( S_1 \subseteq S_X \), \( S_2 \subseteq S_Y \), and \( S_1 \cap S_2 \neq \emptyset \), it follows that \( S_X \cap S_Y \neq \emptyset \), \( S_X \cap S_Y = (l_Y, u_X) \), and \( l_X \leq l_Y \leq l_2 < s < t < u_1 \leq u_X \leq u_Y\). \par  
We now verify conditions (i)–(iii) of \cref{lemma3} for the independent variables \(X\) and \(Y\). From the definition of \( \Delta \phi_{21}(\cdot, \cdot) \), it is evident that \( \Delta \phi_{21}(x, y) \geq 0 \) for every \( (x, y) \in S_X \times S_Y \) and \( x \leq y \). Moreover, for any fixed \( y \in S_X \cap S_Y \), the decreasing property of the function \( w_1'(\cdot)/r_X(\cdot) \) ensures the decreasing property of \( \Delta \phi_{21}(x, y) \) for \( x \in (l_X,\, y) \). Furthermore, using the decreasing property of \( w_1'(\cdot)/r_X(\cdot) \), the increasing property of \( w_2'(\cdot)/r_Y(\cdot) \), and that \( x \leq y \), we get
\[\Delta\phi_{21}(x,\,y)+\Delta\phi_{21}(y,\,x)=\dfrac{w_1'(x)w_2'(y)}{r_X(x)r_Y(y)}-\dfrac{w_1'(y)w_2'(x)}{r_X(y)r_Y(x)}\geq 0,\]
 whenever $(x, y) \in S_X \times S_Y$. Since \( X \leq_{rfr} Y \), and \( X \) and \( Y \) are independent variables satisfying all conditions of \cref{lemma3}, it follows from \cref{lemma3} that
\begin{align*}
\mathbb{E}[w_1(X)]\mathbb{E}[w_2(Y)] \Delta_5&=\mathbb{E}[\phi_2(X, Y)] - \mathbb{E}[\phi_1(X, Y)] \geq 0,
\end{align*}
which further implies that \( \Delta_5=F_{Y_{w_2}}(t) F_{X_{w_1}}(s) - F_{Y_{w_2}}(s)F_{X_{w_1}}(t) \geq 0\). Therefore, the function \( F_{Y_{w_2}}(\cdot)/F_{X_{w_1}}(\cdot) \) is increasing on \( (l_1,\, \infty) \), which implies that \( X_{w_1} \leq_{rfr} Y_{w_2} \). 
\end{proof} 
\begin{example}
Let \( X \) follow the exponential distribution with rate \( \lambda = 1 \), and let \( Y \) follow the standard uniform distribution. Then, we have
\[ \frac{F_Y(x)}{F_X(x)}=\frac{x}{1-e^{-x}}\]
is increasing on \((0,\,\infty)\). Hence, \( X \leq_{rfr} Y \). Now consider \( w_1(x) = x,\; x > 0 \) and \( w_2(x) = -x - \ln(1 - x),\; 0 < x < 1 \). Then we have
\[\frac{w_1'(x)}{r_X(x)} = 1,\quad x > 0, \qquad \text{and} \qquad \frac{w_2'(x)}{r_Y(x)} = \frac{x(1 - x)}{1 - x} = x,\quad 0 < x < 1,\]
which are decreasing and increasing functions, respectively. Therefore, by applying \cref{theorem10}, it follows that \( X_{w_1} \leq_{rfr} Y_{w_2} \).
\end{example}
\section{An Application of the Framework}
In this section, we leverage the proposed framework to derive a continuous and bounded random variable \( X_w \), under the assumption that \( X \) follows a Kumaraswamy distribution. Subsequently, we model real-world data using the random variables \( X \), \( X_w \), and the Beta distribution, and assess their goodness-of-fit metrics by employing the method of maximum likelihood estimation (MLE), followed by statistical tests to compare their fitting performance. Poondi Kumaraswamy introduced a family of double-bounded continuous probability distributions in \citet{KUMARASWAMY198079} to model data with finite lower and upper bounds, incorporating zero inflation, that is, the left end point is zero and lies within the support of the distribution. The Kumaraswamy distribution is comparable to the beta distribution and provides enhanced simplicity, especially for simulation-based applications. This advantage of Kumaraswamy distribution comes from the closed-form availability of its PDF, CDF, and quantile function.\par
We model the data using \( X \), defined on the interval \( (0,1) \). To construct the WTRV \( X_w \), we use the weight function \( w(x) = x^c \), \( x \in (0,1) \), where \( c > 0 \). The resulting distribution of \( X_w \) is called the weighted Kumaraswamy (or simply, weighted Kw) distribution. We demonstrate that \(X_w\) provides a substantially better goodness-of-fit measures compared to \(X\) when modeling rainfall datasets\footnote{\scriptsize{\url{https://www.data.gov.in/resource/rainfall-nw-india-and-its-departure-normal-monsoon-session-1901-2021}}}$^{,\,}$\footnote{\scriptsize{\url{https://www.data.gov.in/resource/rainfall-ne-india-and-its-departure-normal-monsoon-session-1901-2021}}} from Northwest and Northeast India for the period 1901--2021. We denote \( X \sim \text{Kw}(a, b) \) \(\left[ X \sim \text{WK}(a, b, c) \right] \) to say that \( X \) follows the Kumaraswamy \(\left[\text{weighted Kumaraswamy}\right]\) distribution with shape parameters \( a, b > 0 \) \(\left[ a, b, c > 0 \right] \).
\subsection{\textbf{Model Description}} 
The PDF of \( X \sim \text{Kw}(a,\,b) \), as given in \citet{Lemonte01122011}, is  
\[f_X(x)=abx^{a-1}(1-x^a)^{b-1},\quad x\in (0,\,1),\quad
a,\,b>0,\]
 the SF \(\overline{F}_X(\cdot)\) and quantile function \(F_X^{-1}(\cdot)\)  of \(X\) are \[\overline{F}_X(x)=(1-x^a)^b\quad \text{and}\quad F_X^{-1}(x)=(1-(1-x)^{\frac{1}{b}})^{\frac{1}{a}},\quad x\in(0,\,1),\]
 respectively, and its the \(n\)-th raw moment is \[\mathbb{E}[X^n]=ab\int_{0}^{1}x^{a+n-1}(1-x^a)^{b-1}dx=bB\big(1+\frac{n}{a},\, b\big),\quad n\in\mathbb{N},\] where \[B(p,q)=\int_0^1 t^{p-1} (1 - t)^{q-1} dt,\quad p,\,q>0.\]
The PDF of \( X_w \sim \text{WK}(a, b, c) \) (see \cref{table1}), is  
\begin{equation} f_{X_w}(x)=\frac{cx^{c-1}(1-x^a)^b}{bB\big(1+\frac{c}{a},\, b\big)},\quad x\in (0,\,1),\quad
a,\,b,\,c>0,\label{13}
\end{equation}
the SF of \(X_w\) is \begin{align} 
\overline{F}_{X_w}(x)&=\frac{c}{bB\big(1+\frac{c}{a},\, b\big)}\int_{x}^{1}t^{c-1}(1-t^a)^b dt\nonumber \\
&=\frac{c}{abB\big(1+\frac{c}{a},\, b\big)}\int_{x^a}^{1}z^{\frac{c}{a}-1}(1-z)^bdz \nonumber\\
&=\frac{cB_1\big(x^a,1; \frac{c}{a}, b+1\big)}{abB\big(1+\frac{c}{a},\, b\big)}, \nonumber
\end{align}
where \[B_1(y,1; p, q) = \int_y^1 t^{p-1} (1 - t)^{q-1} dt,\quad 0 \leq y \leq 1,\quad p, q > 0,\] and the \( n \)-th raw moment of \( X_w \) is
 \[\mathbb{E}[X_w^n]=\int_{0}^{1}\frac{cx^{c+n-1}(1-x^a)^b }{bB\big(1+\frac{c}{a}, b\big)}dx=\frac{cB\big(\frac{c+n}{a}, b+1\big)}{abB\big(1+\frac{c}{a}, b\big)},\quad n \in \mathbb{N}.
\]
\begin{figure}[H]
 \centering
 {\includegraphics[height=7 cm, width=11 cm]{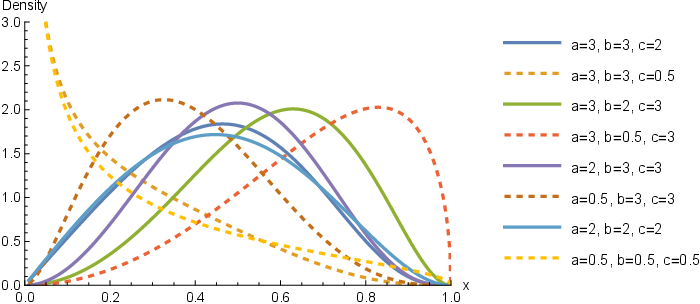}} 
 \caption{Density plot of weighted Kumaraswamy distribution with parameters \(a,\,b,\,c\).} 
\end{figure}
\subsection{\textbf{Estimation of Model Parameters}}
The weighted Kumaraswamy distribution shares similarities with both the beta and Kumaraswamy distributions, since all three are continuous random variables defined on the interval \( ( 0, \, 1) \). To assess their comparative modeling performance, we model monsoon rainfall datasets to these distributions using the method of MLE. Suppose we are given the data \(\underline{z}=\{z_1,\,z_2,\cdots,z_n\}\) such that \(z_{\text{min}}\), \(z_{\text{max}}\) are the minimum and maximum observations in \(\underline{z}\), respectively. To fit the dataset to the considered models, we normalize the observations as follows: \[ x_i = \frac{z_i - z_{\text{min}}}{z_{\text{max}} - z_{\text{min}}}, \quad i \in \{1, 2, \dots, n\}. \] The log-likelihood function \(L\) obtained using Equation~\eqref{13} as follows:
\begin{align}
 L(a,b,c)&=n\log\left(\frac{c}{bB(1+c/a, b)}\right)+(c-1)\sum_{i=1}^{n}\log(x_i)+b\sum_{i=1}^{n}\log(1-x_i^{a}). \label{14}
 \end{align}
We obtain the first-order partial derivatives of \( L(a, b, c) \) with respect to \( a, b, \) and \( c \) and equate them to zero to get the likelihood equations. However, the likelihood equations cannot be solved analytically because of the presence of \( a \), \( b \), and \( c \) as arguments of the beta function in the log-likelihood expression. Thus, we employ the "Limited-memory Broyden–Fletcher–Goldfarb– Shanno with Box" (L-BFGS-B) constrained optimisation method and the \textit{mle()} function in \textsf{R} software to estimate the model parameters, and effectively minimize \( -L(a, b, c)\) to obtain the maximum likelihood estimates of \( a, b, \) and \( c \). Similarly, we estimate the model parameters for the beta and Kumaraswamy distributions employing the same optimisation technique. \cref{table2} presents the descriptive statistics of monsoon rainfall data (in millimeters) from Northwest (NW) and Northeast (NE) India, while \cref{table3} shows the estimated parameters of the fitted probability models for these datasets.
\begin{table}[H]
\caption{{\large Descriptive statistics of the datasets}}
\label{table2} 
\centering
{\footnotesize \begin{tabular}{cccccccccc}
\hline
&&&&&&&&&\\
Dataset & Mean & Median & Mode & Std. Dev. & Variance & Skewness & Kurtosis & Min & Max \\
\hline &&&&&&&&&\\
  NW India 
& 593.595 & 602.8 & 557.2 & 104.84 & 10992.46 & -0.056 & 3.07 & 338.8 & 928.4 \\ &&&&&&&&&\\
 NE India 
& 1491.061 & 1486.8 & 1783 & 200.48 & 40192.68 & 0.226 & 2.83 & 1078.9 & 2090.3 \\ &&&&&&&&&\\
\hline
\end{tabular}}
\end{table}
\begin{table}[H]
    \centering
    \caption{{\large Estimated model parameters using the method of MLE }}
    \label{table3} 
   \begin{tabular}{lcccc}
    \toprule
    \multirow{2}{*}{Dataset} & \multirow{2}{*}{Probability Model} & \multicolumn{3}{c}{Estimated Parameters} \\
    \cmidrule(lr){3-5}
    & & \(a\) & \(b\) & \(c\) \\
    \midrule
        \multirow{3}{*}{\begin{minipage}{2cm} 
               
                 NW India
            \end{minipage}} 
        & Beta            & 3.1297  & 4.2115  & --      \\
        & Kumaraswamy (Kw)  & 2.5667  & 5.8036  & --      \\
        & Weighted Kw     & 2.0092  & 13.2024 & 6.1306  \\
        &&&&\\
        \multirow{3}{*}{\begin{minipage}{2cm} 
             
                  NE India
            \end{minipage}} 
        & Beta            & 2.0394  & 3.0538  & --      \\
        & Kw  & 1.8521  & 3.5741  & --      \\
        & Weighted Kw     & 1.6085  & 3.6069  & 3.2833  \\
        \bottomrule
    \end{tabular}
\end{table}
\begin{figure}[H]
 \centering
 \begin{minipage}{0.5\textwidth}
  \centering
  \includegraphics[height=6cm, width=7.5cm]{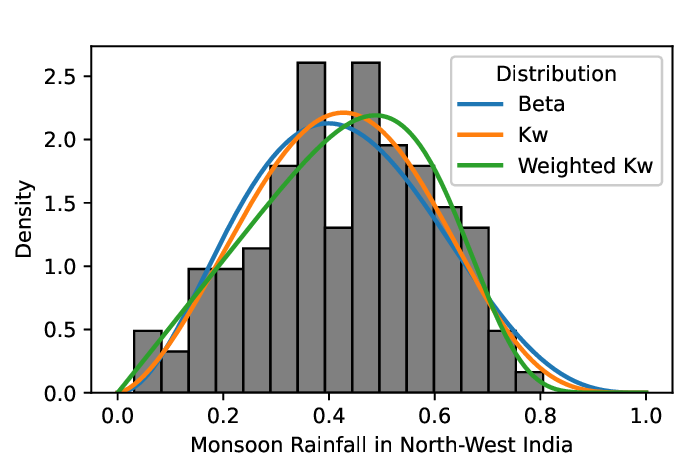}
 \end{minipage}%
 \begin{minipage}{0.5\textwidth}
  \centering
  \includegraphics[height=6cm, width=7.5cm]{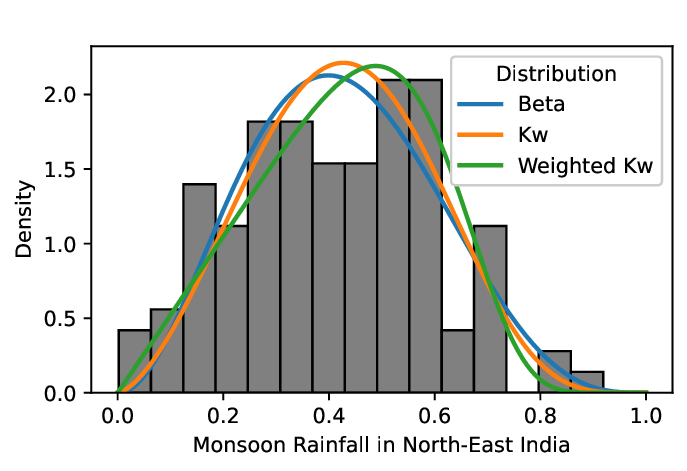}
 \end{minipage}
 \caption{Plots of the histograms and corresponding fitted densities of the datasets.}
\end{figure} 
\begin{table}[H]
    \centering
    \caption{{\large Goodness-of-fit and Model Selection Criteria}} \label{table4} 
   \begin{tabular}{llccc}
    \toprule
    \multirow{2}{*}{Dataset} & \multirow{2}{*}{Metric} & \multicolumn{3}{c}{Probability Model} \\
    \cmidrule(lr){3-5}
    & & Beta & Kw & Weighted Kw \\
    \midrule
        \multirow{4}{*}{\begin{minipage}{2cm} 
                \centering
                  NW India
            \end{minipage}} 
        & Log-likelihood  & 43.3819  & 45.6155  & 47.6259  \\
        & AIC            & -82.7638 & -87.2309 & -89.2517 \\
        & BIC            & -77.2055 & -81.6727 & -80.9144 \\
        & RMSE           & 0.2057   & 0.1448   & 0.1052   \\
     &&&&\\
        \multirow{4}{*}{\begin{minipage}{2cm} 
                \centering
                  NE India
            \end{minipage}} 
        & Log-likelihood  & 28.1876  & 29.3296  & 30.0229  \\
        & AIC            & -52.3752 & -54.6593 & -54.0457\\
        & BIC            & -46.8509 & -49.1349 & -45.7592 \\
        & RMSE           & 0.1753   & 0.1377   & 0.0958   \\
        \bottomrule
    \end{tabular}
\end{table}
\subsection{\textbf{ Statistical Testing for the Goodness-of-Fit of the Datasets}}
We perform several goodness-of-fit tests based on empirical CDFs for the selection of the most suitable probability models for the datasets. We use the Kolmogorov-Smirnov (KS), Anderson-Darling (AD), Cramér-von Mises (CvM), and Chi-Square (with 10 bins) (ChiSq) goodness-of-fit tests to determine the goodness of the fitted models. Let \( F(\cdot) \) represent the distribution function of a probability model obtained by estimating the model parameters for a given dataset, and let \( F_n(\cdot) \) represent the empirical CDF of that dataset. Now, the underlying hypothesis for a goodness-of-fit test is given by  
\[
H_0: F_n(x) = F(x), \quad \forall\, x \quad \text{vs.} \quad H_1: F_n(x) \neq F(x), \quad \text{for at least one } x.
\]  
The above-mentioned hypothesis tests allow us to statistically determine whether a fitted model significantly deviates from the empirical distribution of the given dataset. \par
\cref{table5} provides the results of the goodness-of-fit tests for the three candidate models across each considered dataset.
\begin{table}[H]
    \centering
    \renewcommand{\arraystretch}{1.75}
    \caption{{\large Goodness-of-fit test statistics and p-value for the considered models}} \label{table5} 
   {\footnotesize \begin{tabular}{c c ccc ccc}
        \toprule
        \multirow{2}{*}{Dataset} & \multirow{2}{*}{Test} & \multicolumn{3}{c}{Test statistic values} & \multicolumn{3}{c}{p-value} \\
        \cmidrule(lr){3-5} \cmidrule(lr){6-8}
        & & Beta & Kw & Weighted Kw  & Beta & Kw & Weighted Kw  \\
        \midrule
        \multirow{4}{*}{\begin{minipage}{2cm} 
                
                  NW India
            \end{minipage}} 
        & KS      & 0.0665  & 0.0513  & 0.0451 & 0.6677  & 0.9129  & 0.9687  \\
        & AD      & 0.8442  & 0.4813  & 0.1950  & 0.4500  & 0.7656  & 0.9917  \\
        & CvM     & 0.1247  & 0.0627  & 0.0260  & 0.4772  & 0.7978  & 0.9877  \\
        & ChiSq   & 9.2350 & 6.6068 & 5.8144 & 0.1608  & 0.3587  & 0.3247  \\
        \midrule
        \multirow{4}{*}{\begin{minipage}{2cm} 
                
                  NE India
            \end{minipage}} 
        & KS      & 0.0570  & 0.0543  & 0.0484  & 0.8417  & 0.8800  & 0.9468  \\
        & AD      & 0.6728  & 0.4713  & 0.4024  & 0.5816  & 0.7759  & 0.8461  \\
        & CvM     & 0.0885  & 0.0569  & 0.0422  & 0.6453  & 0.8340  & 0.9223  \\
        & ChiSq  & 4.3261 & 3.4122 & 3.5281 & 0.7415  &0.8444  & 0.7402  \\
        \bottomrule
    \end{tabular}}
\end{table} 
\section{Conclusion}
This study introduces a novel framework for formulating a non-negative continuous probability distribution by utilizing the SF of a non-negative continuous random variable and a weight function. The proposed framework has the potential to construct probability distributions that can model real-world data with superior goodness-of-fit metrics than the input random variable of the framework, as evidenced by the weighted Kumaraswamy distribution, which exhibits a superior goodness-of-fit compared to the original Kumaraswamy probability model. Future research may comprehensively investigate the proposed framework, with particular emphasis on the tail behavior of the WTRV of a random variable. Special focus may also be given to its ability to generate flexible probability distributions useful in various domains, including reliability theory and survival analysis.
 \section*{Acknowledgements} The first author gratefully acknowledges the research fellowship granted by the Ministry of Education, Government of India.
\bibliographystyle{custom-harv}
\bibliography{references}
\end{document}